\def\Z{\ifmmode{\mathbb Z}\else{$\mathbb Z$}\fi}
\def\C{\ifmmode{\mathbb C}\else{$\mathbb C$}\fi}
\def\Q{\ifmmode{\mathbb Q}\else{$\mathbb Q$}\fi}
\def\K{\ifmmode{\mathbb K}\else{$\mathbb K$}\fi}
\def\P{\ifmmode{\mathbb P}\else{$\mathbb P$}\fi}
\def\R{\ifmmode{\mathbb R}\else{$\mathbb R$}\fi}
\def\H{\ifmmode{\mathbb H}\else{$\mathbb H$}\fi}
\def\g{\ifmmode{\mathfrak g}\else {$\mathfrak g$}\fi}
\def\h{\ifmmode{\mathfrak h}\else {$\mathfrak h$}\fi}
\def\a{\ifmmode{\mathfrak a}\else {$\mathfrak a$}\fi}
\def\k{\ifmmode{\mathfrak k}\else {$\mathfrak k$}\fi}
\def\p{\ifmmode{\mathfrak p}\else {$\mathfrak p$}\fi}
\def\b{\ifmmode{\mathfrak b}\else {$\mathfrak b$}\fi}
\def\n{\ifmmode{\mathfrak n}\else {$\mathfrak n$}\fi}
\def\m{\ifmmode{\mathfrak m}\else {$\mathfrak m$}\fi}
\def\t{\ifmmode{\mathfrak t}\else {$\mathfrak t$}\fi}
\def\O{\ifmmode{\mathcal{O}}\else {$\mathcal{O}$}\fi}
\def\W{\ifmmode{\mathcal{W}}\else {$\mathcal{W}$}\fi}
\def\so{\ifmmode{\mathfrak {so}(n)} \else {$\mathfrak {so} (n)$}\fi}
\def\soc{\ifmmode{\mathfrak {so}(2l,\C)} \else {$\mathfrak {so} (n,\C)$}\fi}
\def\socc{\ifmmode{\mathfrak {so}(2l+1,\C)} \else {$\mathfrak {so} (n,\C)$}\fi}
\def\u {\ifmmode{\mathfrak {u}(n)} \else {$\mathfrak {u} (n)$}\fi}
\def\su {\ifmmode{\mathfrak {su}(n)} \else {$\mathfrak {su} (n)$}\fi}
\def\sp {\ifmmode{\mathfrak {sp}(n)} \else {$\mathfrak {sp} (n)$}\fi}
\def\spc {\ifmmode{\mathfrak {sp}(2l,\C)} \else {$\mathfrak {sp} (n,\C)$}\fi}
\def\slr {\ifmmode{\mathfrak {sl}(l,\R)} \else {$\mathfrak {sl} (n,\R)$}\fi}
\def\sl {\ifmmode{\mathfrak {sl}(l,\C)} \else {$\mathfrak {sl} (n,\C)$}\fi}
\def\sll {\ifmmode{\mathfrak {sl}(2l,\C)} \else {$\mathfrak {sl} (n,\C)$}\fi}
\def\slll {\ifmmode{\mathfrak {sl}(2l+1,\C)} \else {$\mathfrak {sl} (n,\C)$}\fi}
\def\slh {\ifmmode{\mathfrak {sl}(m,\H)} \else {$\mathfrak {sl} (n,\H)$}\fi}
\def\sup {\ifmmode{\mathfrak {su}(p,q)} \else {$\mathfrak {su} (p,q)$}\fi}
\def\gl {\ifmmode{\mathfrak {gl}(l,\R)} \else {$\mathfrak {gl} (n,\R)$}\fi}
\def\glc{\ifmmode{\mathfrak {gl}(l,\C)} \else {$\mathfrak {gl} (n,\C)$}\fi}
\def\glh{\ifmmode{\mathfrak {gl}(l,\H)} \else {$\mathfrak {gl} (n,\H)$}\fi}
\def\diag{{\rm diag}}
\def\SLC{{SL(l,\C)}}
\DeclareMathOperator{\tr}{tr}
\newtheorem{thm}{Theorem}
\newtheorem{prop}[thm]{Proposition}
\newtheorem{defi}[thm]{Definition}
\newtheorem{lemma}[thm]{Lemma}
\newtheorem{cor}[thm]{Corollary}
\begin{document}


\title{Cycle transversal flag domains of Hodge type \footnote{Supported by SFB/TR 45 of the Deutsche Forschungsgemeinschaft}  }
\author{Ana-Maria Brecan}
\date{}
\maketitle

\section{Introduction}
Let $V$ be a finite dimensional $\Q$- or $\R$-vector space and $D$ a classifying space for a variation of Hodge structure on $V$, or equivalently a period domain for a given weight $n$, a given set of Hodge numbers and a reference Hodge decomposition on $V_\C$, the complexification of $V$. It is well known that $D$ can be identified with a flag domain, i.e. an open $G_0$-orbit inside a flag manifold $Z:=G/P$, where $G_0$ is a certain real simple Lie group viewed as a real form of a complex simple Lie group $G$ of  classical type (B), (C) or (D) and $P$ a parabolic subgroup. Choose a base point $z_0$ in $D$, i.e. a Hodge filtration corresponding to a Hodge decomposition $V_\C:=\oplus_{p+q=n}V^{p,q}$ such that $V^{q,p}=\overline{V}^{p,q}$, with $P$  the isotropy subgroup of $G$ at $z_0$. Then one can show that $P_0:=P\cap G_0$, the isotropy subgroup of $G_0$ at $z_0$, is a compact subgroup of $G_0$ containing a compact maximal torus $T_0$ of $G_0$. Let $K_0$ denote the unique maximal compact subgroup of $G_0$ containing $P_0$, $\k_0$ its Lie algebra and $\theta$ the Cartan involution on $\g_0$ which is equal to the identity on $\k_0$ and $-$ the identity on $\mathfrak{m}_0$, the orthogonal complement of $k_0$ in $\g_0$ (with respect to the Killing form).  Then $C_0:=K_0/P_0$ is the unique $K_0$-orbit in $D$ which is a closed complex submanifold of $D$. In the language of flag domains and their associated cycle spaces this is called a (base) cycle, see \cite{Fels2006}.
\newline
\noindent
\newline
The base Hodge decomposition on $V_\C$ induces a Hodge decomposition of weight 0 on the Lie algebra $\g$ of $G$ defined by $\g:=\oplus_{f=-n}^{f=n}\g^{-f,f}, $ where for fixed $f$, $\g^{-f,f}$ is defined as the set of endomorphisms $g\in \g$ of $V_{\C}$ which act on the Hodge decomposition by sending $V^{p,q}$ to $V^{p-f,q+f}$, for all $p,q$ with $p+q=n$. As such the Lie algebra $\p$ of $P$ decomposes as $\p=\oplus_{f=-n}^0 \g^{-f,f}$, with Levi component $\mathfrak{s}:=\g^{0,0}$, unipotent radical $\mathfrak{u}^+:=\oplus _{f=-n}^{-1}\g^{-f,f}$ and opposite unipotent radical $\mathfrak{u}^{-}:=\oplus_{f=1}^n \g^{-f,f}$. The holomorphic tangent space of $D$ at $z_0$ can be naturally identified with $\g/\p$ which in turn can be naturally identified with $\mathfrak{u}^-$. Furthermore, if we denote by $\k$ and $\mathfrak{m}$ respectively, the complexification of $\k_0$ and $\mathfrak{m}_0$ respectively, and simply by $\theta$ the complex linear extension of $\theta$ to $\g$, then one can write $\g$ as a direct sum $\g=\k\oplus\mathfrak{m}$, where $\k=\oplus_{f \equiv 0 (\text{mod } 2)}\g^{-f,f}$ and $\mathfrak{m}=\oplus_{f\equiv 1(\text{mod }2)}\g^{-f,f}$. Furthermore, this decomposition is real and if $K$ is the associated Lie subgroup of $G$ with Lie algebra $\k$, then  $K.z$ is the unique minimal dimensional, closed orbit of $K$ in $Z$, itself a flag manifold, which is Matzuki-dual to $D$. Furthermore, this orbit is precisely the base cycle. 
\newline
\noindent
\newline
Now let $G_0'$ be a Lie group of Hodge type, i.e. a real Lie group which contains a compact maximal torus. Furthermore assume that $G_0'$ is semisimple and let $D'=G_0'/P_0'$ be a $G_0'$-flag domain in $Z'=G'/P'$, with $G'$ the complexification of $G_0'$ and $P'$ the isotropy subgroup of $G'$ at $z_0'\in D'$ such that $P_0'=P'\cap G_0'$ is compact. The holomorphic tangent space of $D'$ at $z_0'$ can be naturally identified with $\g'/\p'$ which in turn can be naturally identified with $\mathfrak{u'}^-$, the opposite unipotent radical of $\p'$. The problem that we shall consider in this paper \footnote{Throughout this paper refer to this problem as the classification problem (*)} is that of classifying all proper, equivariant embeddings $f:D'\rightarrow D$ with $D$ an arbitrary period domain and $D'$ a flag domain of a group of Hodge type such that the following assumptions are satisfied:
\begin{enumerate}
\item{The map $f$ induces an embedding of $Z'$ as a compact submanifold of $Z$ such that $G_0'$ and $G'$ respectively are identified with closed subgroups of $G_0$ and $G$ respectively and $P_0\subset P_0'$, $P\subset P'$, i.e. $f(z_0')=z_0$.}
\item{The induced map at the Lie algebra level $\g' \rightarrow \g$ maps the subspace $\mathfrak{u'}^-$ to a subspace of $\mathfrak{m}^-:=\mathfrak{m}\cap\mathfrak{u}^-$.
 }
\end{enumerate}
We will refer from now on throughout the paper to a flag domain which satisfies the conditions of the classification problem (*) as a flag domain of Hodge type. Satake studied this problem in the weight $1$ case in connection to the study of (algebraic) families of abelian varieties. In this case $D$ can be identified with an irreducible Hermitian symmetric space of non-compact type (C), i.e. a flag domain in a symplectic isotropic Grassmannian manifold and the embedding is totally geodesic. In his paper \cite{Satake}, Satake gives a complete solution to the problem in the weight $1$ case. 
It turns out that there are very few such embeddings and $D'$ can only be a classical Hermitian symmetric space with no exceptional cases appearing.  
\newline
\noindent
\newline
Recent work of Green, Griffiths and Kerr and the so called \textit{structure theorem} proved in \cite{MTbook} motivates the study of Mumford-Tate domains which are generalizations of period domains and their realization as closed homogeneous submanifolds of a fixed period domain. Let $N_0$ be the group of real points of the Mumford-Tate group (itself a $\Q$-algebraic group) defined at the base point $z_0$, considered as a real Lie group and $N$ its complexification. Then $N_0$ is a group of Hodge type and by definition the orbit $N_0.z_0$ in $D$ is a Mumford-Tate domain. We summarize here some of the basic properties of $N_0$ and $N_0.z_0$ as found in \cite{MTbook}:
\begin{enumerate}
\item{$N_0$ and $N$ respectively are closed Lie subgroups of $G_0$ and $G$ respectively such that $T_0\subset N_0$;}
\item{$N_0.z_0=N_0/(N_0 \cap P_0)$ is a flag domain of its compact dual $N/(N\cap P)$ both embedded as closed submanifolds of $D$ and $Z$ respectively;}
\item{The Lie algebra $\mathfrak{n}$ of $N$ inherits a Hodge structure of weight $0$ from the Hodge structure on $\g$ such that $\mathfrak{n}^{-f,f}:=\mathfrak{n}\cap \g^{-f,f}$ and a Cartan decomposition $\mathfrak{n}=(\mathfrak{n}\cap \k)\oplus (\mathfrak{n}\cap \mathfrak{m})$.}
\item{The holomorphic tangent space of $N_0.z_0$ at $z_0$ is identified with $$\mathfrak{n}\cap \mathfrak{u}^-=\mathfrak{n}\cap \oplus_{f=1}^n\g^{-f,f}=\oplus_{f=1}^n\mathfrak{n}^{-f,f}.$$}
\end{enumerate}
Thus under the extra assumption that $N_0$ is a real semisimple Lie group and $\mathfrak{n}\cap \k=\mathfrak{n}\cap \g^{0,0}$ one has a flag domain of Hodge type as considered in the classification problem (*). Of course in general a Mumford-Tate group might contain compact factors (which turn out to be subgroups of $K_0$) or it can itself be compact or even equal to $K_0$, a torus or the whole group $G$, see \cite{MTbook} for examples. Thus in general the canonically defined base cycle of a Mumford-Tate domain can have higher dimensional intersection with the base cycle $C_0$ of the period domain and thus can be a non-trivial closed submanifold of $C_0$. 
\newline
\noindent
\newline
One of the most important classifying spaces for variations of Hodge structure are those that arise from geometry, related to the study of (algebraic) families of varieties, as target spaces for the image of the period map. Any complex submanifold $N$ which lies in the image of a period map must satisfy an additional condition, namely Griffiths transversality, or in other words the infinitesimal period relation (IPR). Assume $z_0 \in N\subset D$ is a base point. Then the holomorphic tangent space $\mathfrak{a}$ of $N$ at $z_0$ can be identified with a subspace of the tangent space of $D$ at $z_0$ which in turn can be identified with $\mathfrak{u}^-$. Then the infinitesimal period relations says that $\mathfrak{a}$ must be a subspace of $\g^{-1,1}$.
 For an introduction to period mappings and period domains see \cite{Stefan}. 
\newline
\noindent
\newline
Thus a natural first generalization to Satake's paper is that of classifying all proper, equivariant embeddings $f:D' \rightarrow D$, for a flag domain of Hodge type $D'$ satisfying the infinitesimal period relation \footnote{Contrary to the exposition in the introduction throughout the paper we first treat the general classification problem (*) and then explicitly describe the case when the condition $2.$ is replaced by the infinitesimal period relation.}. This is done in \textit{Theorem} \ref{maintheorem} and related corollaries. To solve this problem we first describe certain proper, equivariant embeddings $i:D''\hookrightarrow D$ for $D''$ a product of irreducible Hermitian symmetric spaces (of non-compact type) of type $(A)$ or $(C)$, by explicitly identifying $D''$ as a closed submanifold of $D$. This is done by showing that, with one exception in type (B), $\g^{-1,1}$ is a direct sum of unipotent pieces, each of them being naturally identified with the tangent space to a Grassmannian submanifold of Z of type (A) or (C). This is done in \textit{Proposition} \ref{mainproposition}, where the Harish-Chandra coordinates of each such unipotent piece are made explicit. As a corollary, one obtains a dimension formula for $\g^{-1,1}$ in terms of Hodge numbers (similar formulas are also described for the dimension of $\g^{-p,p}$ with $p$ odd), as a sum of the dimensions of the various Grassmannians involved in the decomposition. This is a known formula, even though not in the present interpretation as a sum of dimensions of various Grassmannians. Furthermore, for each abelian subspace $\mathfrak{a}$ of $\g^{-1,1}$, we construct an abelian subalgebra $\g^{-1,1}_{\mathfrak{a}}$ of $\g$, minimal with respect to the decomposition of $\g^{-1,1}$ as a direct sum of unipotent pieces, such that $\g^{-1,1}_{\mathfrak{a}}$ is the tangent space to an embedded product $G_1\times G_2 \times \dots \times G_s$ of Grassmannians of type (A) (together with one Grassmannian of type (C) for the symplectic case), with the corresponding Hermitian symmetric space of non-compact type identified as a closed submanifold of $D$. Furthermore, we make explicit the Harish-Chandra coordinates of each such piece. Finally, we show that any proper equivariant embedding $f:D'\rightarrow D$ satisfying the infinitesimal period relation must factor through some $i:D''\hookrightarrow D$ and thus prove that $D'$ must be a Hermitian symmetric space embedded as a closed submanifold of a Hermitian symmetric space whose irreducible factors are of type $(A)$ or $(C)$. 
\newline
\noindent
\newline
In order to tackle the general problem we introduce Hodge triples and Hodge brackets, see \textit{definition} \ref{Hodgetriple}, and give a refinement of the Hodge decomposition by describing the direct sum $\g^{-f,f}\oplus\g^{0,0}\oplus \g^{f,-f}$ as a direct sum of Hodge triples. Hodge triples are in some sense a generalization of $\mathfrak{sl}(2)$-triples and are related to embeddings of $\mathfrak{sl}(h,\C)$ in $\g$ for some $h$ depending on the Hodge numbers and $f$. 
\newline
\noindent
\newline
In \textit{section} \ref{results} we give a complete solution to the classification problem (*). The explicit construction done for $\g^{-1,1}$ has a similar equivalent construction for any $\g^{-p,p}$-piece for $p$ odd (and thus for $\mathfrak{m}$) and using the concept of a Hodge triple we show that each $\g^{-p,p}$, $p$ odd and thus $\mathfrak{m}$ itself, with one exception in type $B$, can be written as a direct sum of unipotent pieces each of them being identified with the tangent space to a Grassmannian submanifold of type $(A)$ or $(C)$. As in the case of $\g^{-1,1}$ any such embedding factors through some proper, equivariant embedding $i:D''\hookrightarrow D$ for $D''$ a product of irreducible Hermitian symmetric spaces (of non-compact type) of type $(A)$ or $(C)$, and thus $D'$ must be a Hermitian symmetric space embedded as a closed submanifold of a Hermitian symmetric space (of non-compact type) whose irreducible factors are of type $(A)$ or $(C)$. The main result is contained in the statement of the \textit{Theorem} \ref{teoremaprincipala}.  In \textit{Corollary} \ref{clasicalgroups} we show that in fact the flag domains of Hodge type are Hermitian symmetric space of non-compact type whose irreducible factors are of classical type (A), (B), (C) or (D) and the embeddings describing each such irreducible factor are precisely the embeddings $f: D' \rightarrow D''$ listed by Satake in his classification.\newline
\noindent
\newline
We conclude the paper with a remark explaining the connections of the techniques and results used in the current paper and the work \cite{VHS}, \cite{Robles} and \cite{Kerr1}. Our work here was motivated by our interest in Mumford-Tate domains. For related work in this direction see also \cite{Laza} and \cite{Kerr2}.
\section{Lie structure of period domains}\label{liestructure} 
In this section we introduce the basic notation for the paper. For this we let $G$ denote a complex semisimple Lie group, $G_0$ a real form and $\g$ and $\g_0$ their associated Lie algebras, respectively. Let $P$ be a parabolic subgroup, $\p$ its associated Lie algebra. Denote by $Z:=G/P$ the associated flag manifold and by $D$ a flag domain, i.e. a $G_0$-open orbit in $Z$. This paper is concerned with the flag domains that appear in Hodge theory as classifying spaces for variations of Hodge structure. These are of two flavours, one arising as an open orbit of $SO(p,q)$ acting on a flag manifold associated to $SO(p+q,\C)$ and the second one arising as an open orbit of $Sp(n,\R)$ acting on a flag manifold associated to $Sp(n,\C)$. For reasons which will become clear in the next section one is also interested in the real form $SU(p,q)$ of $SL(p+q,\C)$ and more generally in real semisimple Lie groups of Hodge type. For an introduction to flag domains see \cite{Fels2006}
\newline
\noindent
\newline
The following paragraphs provide a description of the above objects for the classical semisimple Lie groups. Furthermore, a description of their Lie algebras and root systems is given so as to suit the Hodge theoretic considerations of the next chapter. For an introduction to the basic theory of Lie groups and Lie algebras see  \cite{Wallach}, \cite{Knapp2002}. For a basic introduction to the theory of parabolic subgroups see \cite{Bell}.

\subsection{Type (A): $G=SL(l,\C)$}\label{sl}
Fix $(e_1,\dots, e_l)$ the standard ordered basis in $\C^l$ and let $GL(l,\C)$ denote the group of invertible $l\times l$-matrices and $\glc$ its Lie algebra. Then
$$G:=\SLC:=\{g\in GL(l,\C): \det g=1\},$$
$$\g:=\sl:=\{g\in \glc: \tr g=0\}, \quad g=(a_{ij})_{1\le i,j\le l}\in \g,$$
$$\h:=diag[a_{11},\dots,a_{ll}] \text{ Cartan subalgebra},$$
$$\varepsilon_i:\h\rightarrow \C, \, \varepsilon_i(h)=a_{ii}, \, \forall 1\le i \le l,$$
$$\theta:=\{\pm(\varepsilon_i-\varepsilon_j): \, 1\le i<j\le l\},\text{ a set of roots},$$
$$\theta^+:=\{\varepsilon_i-\varepsilon_j:\,1\le i <j \le j\}, \quad  \theta^-:=\{-\varepsilon_i+\varepsilon_j:\,1\le i <j \le j\},$$
$$\Psi:=\{\varepsilon_i-\varepsilon_{i+1}:\,1\le i \le l-1\},$$
a choice of positive roots, negative roots and simple roots respectively. The Dinkyn diagram of $\g$ has $l-1$ nodes corresponding to any choice of $l-1$ simple roots. The root vectors corresponding to positive roots span the upper-triangular matrices which together with $\h$ form a Borel subalgebra $\b^+$ and the root vectors corresponding to negative roots span the lower triangular matrices which together with $\h$ form the opposite Borel subalgebra $\b^-$.  If $(f_1,\dots, f_s)$ is a dimension sequence such that $f_1+\dots+f_s=l$, then the block upper triangular matrices of sizes $f_1$ up to $f_s$ form a parabolic subalgebra $\p^+$ with Levi complement $\mathfrak{s}$ given by block diagonal matrices in $\g$ of sizes $f_1\times f_1$ up to $f_s\times f_s$ and unipotent radical $\mathfrak{u}^+$ such that $\p^+= \mathfrak{u}^+\rtimes \mathfrak{s}$. The block lower triangular matrices of sizes $f_1$ up to $f_s$ form the opposite parabolic subalgebra $\p^-$ with unipotent radical $\mathfrak{u}^-$ and Levi decomposition $\p^-=\mathfrak{u}^-\rtimes s$. Furthermore, $\b^+\subset \p^+$. In general any parabolic subalgebra of $\g$ is characterised by a unique dimension sequence and two parabolic subalgebras in $\g$ have the same dimension sequence if and only if they are conjugate to each other. This can also be expressed by encircling the $\sum_{i=1}^k f_i^{th}$-node for each $1\le k\le s$ in the Dynkin diagram of $\g$. For our choice of simple roots and parabolic subalgebra $\p^+$, this nodes correspond to 
$$\mathcal{N}_{\p}:=\{\varepsilon_{f_1}-\varepsilon_{f_2},\varepsilon_{f_1+f_2}-\varepsilon_{f_1+f_2+1}, \dots, \varepsilon_{f_1+\dots+f_{s-1}}-\varepsilon_{f_1+\dots +f_{s-1}+1}\}.$$
For a more detailed description of this objects and their root structure in general see \cite{Knapp2002}.
\newline
\noindent
\newline
At the group level, recall that given a dimension sequence $f:=(f_1,\dots, f_s)$ with $f_1+\dots+f_s=l$, 
one way to define the flag manifold $Z$ is as the set of all partial flags of type $f$, namely $$\{\mathcal{F}:\, 0\subset F_1\subset\dots\subset F_s=\C^l\},$$
where $\dim(F_i/F_{i-1})=f_i$ and $\dim F_0=0$. Then it is easy to check that $G$ acts transitively on $Z$. By definition a subgroup $P$ of $G$ is called a parabolic subgroup if $P$ is the isotropy subgroup at some base point, a flag, $z_0$ in $Z$. Thus $Z$ can be identified with $G/P$ and furthermore the variety $Z$ parametrizes the set of parabolic subgroups of $G$. For each $i\ge 1$, choose subspaces $S_i$ such that $F_i=F_{i-1}\oplus S_i$. Then by definition the unipotent radical of $P$ is the subgroup $U$ of $P$ consisting of those matrices which induce the identity transformation on $F_i/F_{i-1}$. By definition a Levi complement of $U$ is the subgroup $S$ of $P$ which is the common stabilizer of $S_i$. Furthermore, $P$ can be decomposed as a semidirect product of $U$ and $S$, i.e. $P=U\rtimes S$. Let $P^+$ be a parabolic subgroup with unipotent radical $U^+$ and Levi complement $S$ and $P^-$ denote the opposite parabolic to $P^+$, i.e. $P^-=U^-\rtimes S$ for some unipotent radical $U^-$. Furthermore, denote their Lie algebras with $\p^+$, $\p^-$, $\mathfrak{u}^+$, $\mathfrak{u}^-$, and $\mathfrak{s}$ respectively and recall that $\p^+$ and $\p^-$ define parabolic subalgbras, $\mathfrak{u}^+$, $\mathfrak{u}^-$ their respective unipotent radicals and $\mathfrak{s}$ their Levi complement. If $z_0$ is the base point in $Z$ which defined $P^+$, then the tangent space of $Z$ at $z_0$ can be naturally identified with $\g/\p^+$ which in turn can be naturally identified with $\mathfrak{u}^-$.
\newline
\noindent
\newline
If $(e_1,\dots,e_l)$ is the standard ordered basis in $\C^l$ and $P^+$ the isotropy subgroup of $G$ at the partial flag $\mathcal{E}$ associated to this ordered basis, then $P^+$ is given by block upper triangular matrices of sizes $f_1$, up to $f_s$, respectively and its Lie algebra is $\p^+$, as defined in the first paragraph of this section. If each $f_i$ in the dimension sequence is equal to $1$, then $P^+=B^+$ is the Borel subgroup of upper triangular matrices with Lie algebra $\b^+$.
\newline
\noindent
\newline
The real form of $SL(l,\C)$ of importance in Hodge theory is $SU(a,b)$, with $a+b=l$. This is defined as the group of isometries associated to the standard Hermitian form $h:\C^l\times\C^l\rightarrow \C$, $$((x_1,\dots, x_l),(y_1,\dots,y_l))\mapsto \sum_{i=1}^{a}x_i\overline{y}_i-\sum_{i=a+1}^{l}x_i\overline{y}_i.$$
The flag domains of $SU(a,b)$ are parameterized by the sequences $c:=(c_1,\dots, c_s)$ with $c_i\le a,\,\forall 1\le i \le s$ and $d:=(d_1,\dots, d_s)$ with $d_i\le b,\,\forall 1\le i \le s$ such that $c_i+d_i=f_i,\,\forall 1\le i \le s$. For a fixed pair $(c,d)$, the open orbit $D_{c,d}$ is described by the set of all flags $\{\mathcal{F}:\, 0\subset F_1\subset \dots\subset F_s\}$ such that $F_i$ has signature $(\sum_{j=1}^ic_j,\sum_{j=1}^id_j)$ with respect to $h$, for all $1\le i \le s$.
An equivalent parametrisation of such an orbit is given by a sequence $\alpha$ of "+"'s and "-'"s. The sequence is defined by prescribing $s$ adjacent blocks , where each block $i$ has $c_i$-"+"'s and $d_i$-"-"'s. 

\subsection{Type (D): $G:=SO(2l,\mathbb{C})$}\label{so}
Let $Q:\C^{2l}\times\C^{2l}\rightarrow \C$ be the symmetric bilinear form on $\C^{2l}$ defined by the following matrix: 
$$I:=\begin{pmatrix}
O_l & I_l \\
I_l & O_l
\end{pmatrix},$$
where $I_l$ denotes the $l\times l$ identity matrix and $O_l$ the $l\times l$ zero matrix. 
Then $G$ is defined as the group of isometries in $SL(2l,\C)$, which preserve $Q$, i.e. 
$$G:=SO(2l,\C):=\{g\in SL(2l,\C):\, gIg^t=I\},$$
$$\g:=\soc:=\{g\in \slll: gI+Ig^{t}=0\}.$$
Let $g\in \g$. Then $g$ has the following block-matrix form: 
$$g=\{\begin{pmatrix}
A & B \\
C & -A^{t}
\end{pmatrix}: \, B+B^{t}=0, C+C^{t}=0, \,A,B,C\in\glc\}.$$
\noindent
Furthermore, we choose the following Cartan subalgebra, the set of roots with respect to it and the following positive roots, negative roots and simple roots respectively:
$$\h:=diag(a_{11},\dots, a_{ll},-a_{11},\dots, -a_{ll}),$$  
$$\theta:=\{\pm \varepsilon_i\pm \varepsilon_j:\,1\le i<j\le l\},$$   
$$\theta^+:=\{\varepsilon_i\pm \varepsilon_j:\,1\le i<j\le l\},$$ 
$$\theta^-:=\{-\varepsilon_i\pm \varepsilon_j:\,1\le i<j\le l\},$$
$$\Psi:=\{\varepsilon_i-\varepsilon_{i+1}:\,1\le i \le l-1 \}\cup\{\varepsilon_{l-1}+\varepsilon_{l}\}.$$
Consider the natural, diagonal embedding of $\sl$ in $\soc$ (with the choice of Cartan subalgebra and root structure described in \ref{sl}), given by $$A\mapsto \diag[A,-A^t],\,\forall A\in \sl.$$
The root vectors which span this $\sl$ correspond to the following set of roots: $\theta_s^+:=\{\varepsilon_i-\varepsilon_j:1\le i<j\le l\}$, with their respective root vectors spanning the upper triangular matrices in $A$, while the negative roots are given by $\theta_s^-:=\{-\varepsilon_i+\varepsilon_j:1\le i<j\le l\}$, with their respective root vectors spanning the lower triangular matrices in $A$. Let $\mathcal{D}$ be the Dynkin diagram of $\g$ and think of $\Psi$ as a set of simple roots representing its nodes. Then this embedding is equivalent to the choice of subdiagram $\mathcal{D}'$ of $\mathcal{D}$ corresponding to the subset $\Psi':=\Psi-\{\varepsilon_{l-1}+\varepsilon_{l}\}$ of simple roots.  
\noindent
The antidiagonal block is composed of two antisymmetric matrices $$g=\{\begin{pmatrix}
0 & B \\
C & 0
\end{pmatrix}: \, B+B^{t}=0, C+C^{t}=0, \,\in\glc\}.$$
The root vectors corresponding to the set of positive roots $\{\varepsilon_i+\varepsilon_j:1\le i<j\le m\}$ span the $B$-part, while the  root vectors corresponding to the set of negative roots $\{-\varepsilon_i-\varepsilon_j:1\le i<j\le m\}$ span the $C$-part. 
\newline
\noindent
\newline
Another important embedding of $\sl$ (with the choice of Cartan subalgebra and root structure described in \ref{sl}) in $\soc$ is given as follows. Let $(d,l-d)$ be a dimension sequence defining a maximal parabolic subalgebra $\mathfrak{p}^+$ of $\sl$. Then the Levi component $\mathfrak{s}$ of $\mathfrak{p}^+$ is mapped to the spann of the set of root vectors corresponding to the following set of roots: $\{\pm (\varepsilon_i-\varepsilon_j): \, 1\le i \le d-1,\, i<j\le d\} \cup \{\pm (\varepsilon_i-\varepsilon_j): \, d+1\le i \le l-1,\, i<j\le l\}$ together with the diagonal Cartan subalgebra $\h$. The positive unipotent part $\mathfrak{u}^+$ of $\mathfrak{p}^+$ is mapped to the span of the set of root vectors corresponding to the following set of roots: $\{ (\varepsilon_i-\varepsilon_j): \, 1\le i \le d,\, d+1<j\le l\}$. The negative unipotent part $\mathfrak{u}^-$ is mapped to the root vectors in $\soc$ corresponding to the following set of roots $\{ (\varepsilon_i+\varepsilon_j): \, 1\le i \le d-1,\, i<j\le d\}$. This embedding is equivalent to the choice of subdiagram $\mathcal{D}'$ of $\mathcal{D}$ corresponding to the subset $\Psi':=\Psi-\{\varepsilon_{d}+\varepsilon_{d+1}\}$ of simple roots. From now on call this embedding the anti-diagonal embedding. 
\newline
\noindent
\newline
 Let $f=(f_1,\dots, f_s)$ be a dimension sequence. Define the flag manifold of isotropic flags with respect to $Q$ as
 $$Z:=\{\mathcal{F}: \text{ either } F_i\subset F^{\perp}_j \text{ or } F^{\perp}_j\subset F_i,\,\forall 1\le i,j\le s\},$$
 where the orthogonal complement is taken with respect to $Q$. One can then check that $G$ acts transitively on $Z$ and the isotropy subgroup of $G$ and any base point $z\in Z$ is a parabolic subgroup $P_z$. As for all flag manifolds, the variety $Z$ parametrizes the set of parabolic subgroups corresponding to a fixed given dimension sequence $f$. Let $\tilde{z}_0$ be the full isotropic flag associated to the ordered basis $\{e_1,\dots, e_l,e_{2l},\dots, e_{l}\}$ of $\C^{2l}$ and let $B_{\tilde{z}_0}$ denoted the isotropy subgroup of $G$ at $\tilde{z}_0$. Then it is immediate to check that $B_{\tilde{z}_0}$ is the Borel subgroup in $G$ with Lie algebra $\b_{\tilde{z}_0}=\h\bigoplus\oplus_{\alpha \in \theta^+}\g_{\alpha}$, the Borel subalgebra given by our choice of positive roots. Without confusion let $\tilde{z}_0$  also denote the partial isotropic flag in $Z$ corresponding to the dimension sequence $f$ and the ordered basis $\{e_1,\dots, e_l,e_{2l},\dots, e_{l}\}$. Then $B_{\tilde{z}_0}\subset P_{\tilde{z}_0}$ and one has a similar description of $\p_{\tilde{z_0}}$, the Lie algebra of $P_{\tilde{z_0}}$ in terms of simple roots and Levi decomposition as in section \ref{sl}. 
 \newline
 \noindent
 \newline
The real form of $G=SO(2l,\C)$ of importance in Hodge theory is $SO(2a,2b)$, with $2a+2b=2l$. This is defined as the group of isometries in $SO(2l,\C)$ of the following Hermitian symmetric form on $\C^{2l}$.
\newline
\noindent
\newline
Let $\sigma:\C^{2l}\rightarrow \C^{2l}$ be a complex conjugation on $\C^{2l}$,i.e. an antilinear involution, defined by $$(x_1,\dots,x_l,x_{l+1}, \dots,x_{2l})\mapsto (\overline{x}_{l+1},\dots,\overline{x}_{2l},\overline{x}_1,\dots, \overline{x}_l).$$
Let $h:\C^{2l}\times\C^{2l}\rightarrow \C$ be defined by 
$$((x_1,\dots,x_{2l}),(y_1,\dots,y_{2l}))\mapsto \sum_{i=1}^ax_i\overline{y}_i-\sum_{i=a+1}^{l}x_i\overline{y}_i+\sum_{i=l+1}^{l+a}x_i\overline{y}_i-\sum_{i=l+a+1}^{2l}x_i\overline{y}_i.$$
Then $\sigma$ acts on the standard basis by sending $e_i\mapsto e_{l+i}$, for all $1\le i \le  l$ and one obtains a decomposition of $\C^{2l}$ as a direct sum of its positive and negative subspaces, i.e. $$\C^{2l}=E^+\oplus E^-\oplus \overline{E}^+\oplus \overline{E}^-,$$ where $E^+:=<e_1,\dots, e_a>$, $\overline{E}^{+}=<e_{l+1},\dots, e_{l+a}>$ and $E^-:=<e_{a+1},\dots, e_{l}>$, $\overline{E}^-=<e_{l+a+1},\dots, e_{2l}>$.
\newline
\noindent
\newline
As in the previous case the flag domains are parametrized by signature, a sequence of "+"'s and "-"'s 
$$D:=\{\mathcal{F}:\, \mathcal{F} \text{ is isotropic and signature of } F_i \text{ is } (c_i,d_i) \,\forall 1\le i \le s\},$$
\noindent
From a Hodge theoretic point of view an important case is that of isotropic flag manifolds associated to a symmetric dimension sequence $$(f_1,\dots,f_{m-1}, f_m,f_{m-1}\dots, f_1),$$ with $f_m$ even, such that the corresponding signature sequence parametrising a flag domain has constant sign in each of its blocks. In particular the flag domains have compact isotropy and are measurable. Observe that $h$ restricts to a Hermitian form on $E^+\oplus E^-\cong \C^{l}$ and defines the isometry group 
$SU(a,b)$ of $SL(l,\C)$. The Lie algebra $su(a,b)$ of $\sl$ embeds diagonally inside $\soc$. There is thus a one-to-one correspondence in between flag domains of $SU(a,b)$ with constant sign on each block of a signature sequence, and flag domains of $SO(2a,2b)$ defined by a symmetric dimension sequence with constant sign on each block of the signature sequence. If $\alpha$ is a signature sequence with constant sign on each block parametrising an SU(a,b) flag domain and $\alpha'$ is the same sequence read in reverse, then $\alpha\alpha'$ parametrises the corresponding $SO(2a,2b)$-flag domain. 
\newline
\noindent
\newline
 In general any parabolic subalgebra of $\g$ is characterised by a unique dimension sequence and two parabolic subalgebras in $\g$ have the same dimension sequence if and only if they are conjugate to each other. This can also be expressed by encircling the $\sum_{i=1}^k f_i^{th}$-node for each $1\le k\le s$ in the Dynkin diagram of $\g$. For our choice of simple roots and parabolic subalgebra $\p_{\tilde{z}_0}$ corresponding to a symmetric dimension sequence, this nodes correspond to 
$$\mathcal{N}_{\p}:=\{\varepsilon_{f_1}-\varepsilon_{f_2},\varepsilon_{f_1+f_2}-\varepsilon_{f_1+f_2+1}, \dots, \varepsilon_{f_1+\dots+f_{m-1}}-\varepsilon_{f_1+\dots +f_{m-1}+1}\}.$$

\subsection{Type (B): $G:=SO(2l+1,\mathbb{C})$}\label{soo}
Let $Q:\C^{2l+1}\times\C^{2l+1}\rightarrow \C$ be the symmetric bilinear form on $\C^{2l+1}$ defined by the following matrix: 
$$I:=\begin{pmatrix}
O_l & O & I_l \\
O & 1 & O \\
I_l & O &O_l
\end{pmatrix},$$
where $I_l$ denotes the $l\times l$ identity matrix. 
Then $G$ is defined as the group of isometries in $SL(2l+1,\C)$, which preserve $Q$, i.e. 
$$G:=SO(2l+1,\C):=\{g\in SL(2l+1,\C):\, gIg^t=I\},$$
$$\g:=\socc:=\{g\in \slll: gI+Ig^{t}=0\}.$$
Let $g\in \g$. Then $g$ has the following block-matrix form: 
$$g=\{\begin{pmatrix}
A & x & B \\
y^t & 0 & -x^t\\
C & -y & -A^{t}
\end{pmatrix}: \, B+B^{t}=0, C+C^{t}=0, \,A,B,C\in\glc, \, x,y\in \C^l\}.$$
\noindent
We make the following choice of Cartan subalgebra, root system with respect to it, positive roots, negative roots and simple roots respectively:
$$\h:=diag(a_{11},\dots, a_{ll},0, -a_{11},\dots, -a_{ll}),$$  
$$\theta:=\{\pm \varepsilon_i\pm \varepsilon_j:1\le i<j\le l\} \cup \{\pm \varepsilon_i: 1\le i \le l\},$$   
$$\theta^+:=\{\varepsilon_i\pm \varepsilon_j:1\le i<j\le l\}\cup \{ \varepsilon_i: 1\le i \le l\},$$ 
$$\theta^-:=\{-\varepsilon_i\pm \varepsilon_j:1\le i<j\le l\}\cup \{-\varepsilon_i:1\le i \le j\}.$$
$$\Psi:=\{\varepsilon_i-\varepsilon_{i+1}:\,1\le i \le l-1 \}\cup\{\varepsilon_{l}\}.$$
As in the even dimensional case, consider the diagonal and anti-diagonal $\sl$-embeddings. Furthermore, the negative short roots span the $y$-block, and the positive short roots the $x$-block.  
\newline
\noindent
\newline
 Let $f=(f_1,\dots, f_s)$ be a dimension sequence. Define the flag manifold of isotropic flags with respect to $Q$ as
 $$Z:=\{\mathcal{F}: \text{ either } F_i\subset F^{\perp}_j \text{ or } F^{\perp}_j\subset F_i,\,\forall 1\le i,j\le s\},$$
 where the orthogonal complement is taken with respect to $Q$. One can then check that $G$ acts transitively on $Z$ and the isotropy subgroup of $G$ and any base point $z\in Z$ is a parabolic subgroup $P_z$. As for all flag manifolds, the variety $Z$ parametrizes the set of parabolic subgroups corresponding to a fixed given dimension sequence $f$. Let $\tilde{z}_0$ be the full isotropic flag associated to the ordered basis $\{e_1,\dots, e_l,e_{l+1},e_{2l+1},\dots, e_{l}\}$ of $\C^{2l+1}$ and let $B_{\tilde{z}_0}$ denoted the isotropy subgroup of $G$ at $\tilde{z}_0$. Then it is immediate to check that $B_{\tilde{z}_0}$ is the Borel subgroup in $G$ with Lie algebra $\b_{\tilde{z}_0}=\h\bigoplus\oplus_{\alpha \in \theta^+}\g_{\alpha}$, the Borel subalgebra given by our choice of positive roots. Without confusion let $\tilde{z}_0$  also denote the partial isotropic flag in $Z$ corresponding to the dimension sequence $f$ and the ordered basis $\{e_1,\dots, e_l,e_{l+1},e_{2l+1},\dots, e_{l}\}$. Then $B_{\tilde{z}_0}\subset P_{\tilde{z}_0}$ and one has a similar description of $\p_{\tilde{z_0}}$, the Lie algebra of $P_{\tilde{z_0}}$ in terms of simple roots and Levi decomposition as in section \ref{sl}. 
\newline
 \noindent
 \newline
The real form of $G=SO(2l+1,\C)$ of importance in Hodge theory is $SO(2a,2b+1)$, with $2a+2b=2l$. This is defined as the group of isometries in $SO(2l+1,\C)$ of the following Hermitian symmetric form on $\C^{2l+1}$.
\newline
\noindent
\newline
Let $\sigma:\C^{2l+1}\rightarrow \C^{2l+1}$ be a complex conjugation on $\C^{2l+1}$,i.e. an antilinear involution, defined by $$(x_1,\dots,x_l,x_{l+1},x_{l+2} \dots,x_{2l+1})\mapsto (\overline{x}_{l+2},\dots,\overline{x}_{2l+1},\overline{x}_{l+1},\overline{x}_1,\dots, \overline{x}_l).$$
Let $h:\C^{2l+1}\times\C^{2l+1}\rightarrow \C$ be defined by 
$$((x_1,\dots,x_{2l}),(y_1,\dots,y_{2l}))\mapsto \sum_{i=1}^ax_i\overline{y}_i-\sum_{i=a+1}^{l+1}x_i\overline{y}_i+\sum_{i=l+2}^{l+a}x_i\overline{y}_i-\sum_{i=l+a+2}^{2l}x_i\overline{y}_i.$$
Then $\sigma$ acts on the standard basis by sending $e_i\mapsto e_{l+i+1}$, for all $1\le i \le  l$ and $e_{l+1}\mapsto e_{l+1}$. One obtains a decomposition of $\C^{2l+1}$ as a direct sum of its positive and negative subspaces, i.e. $$\C^{2l+1}=E^+\oplus E^-\oplus<e_{l+1}> \oplus\overline{E}^+\oplus \overline{E}^-,$$ where $E^+:=<e_1,\dots, e_a>$, $\overline{E}^{+}=<e_{l+2},\dots, e_{l+a}>$ and $E^-:=<e_{a+1},\dots, e_{l}>$, $\overline{E}^-=<e_{l+a+2},\dots, e_{2l}>$.
\newline
\noindent
\newline
As in the previous case the flag domains are parametrized by signature, see \ref{so} for details and the parabolic subgroups to be considered are those defined by a symmetric dimension sequence $(f_1,f_2,\dots, f_{m-1}, f_m, f_{m-1},\dots, f_2,f_1)$, with $f_m$ odd in this case.
 \newline
\noindent
\newline
Then any parabolic subalgebra of $\g$ can be visualized by encircling the $\sum_{i=1}^k f_i^{th}$-node for each $1\le k\le m-1$ in the Dynkin diagram of $\g$. For our choice of simple roots and parabolic subalgebra $\p_{\tilde{z}_0}$ corresponding to a symmetric dimension sequence, this nodes correspond to 
$$\mathcal{N}_{\p}:=\{\varepsilon_{f_1}-\varepsilon_{f_2},\varepsilon_{f_1+f_2}-\varepsilon_{f_1+f_2+1}, \dots, \varepsilon_{f_1+\dots+f_{m-1}}-\varepsilon_{f_1+\dots +f_{m-1}+1}\}.$$
\noindent
\subsection{Type (C): $G:=Sp(2l,\mathbb{C})$}\label{sp}
Let $Q:\C^{2l}\times\C^{2l}\rightarrow \C$ be the antisymmetric bilinear form on $\C^{2l}$ defined by the following matrix: 
$$J:=\begin{pmatrix}
O_l & J_l \\
-J_l & O_l
\end{pmatrix},$$
where $J_l$ denotes the $l\times l$ identity matrix. 
Then $G$ is defined as the group of isometries in $SL(2l,\C)$, which preserve $Q$, i.e. 
$$G:=Sp(2l,\C):=\{g\in SL(2l,\C):\, gJg^t=J\},$$
$$\g:=\spc:=\{g\in \slll: gJ+Jg^{t}=0\}.$$
Let $g\in \g$. Then $g$ has the following block-matrix form: 
$$g=\{\begin{pmatrix}
A & B \\
C & -A^{t}
\end{pmatrix}: \, B-B^{t}=0, C-C^{t}=0, \,A,B,C\in\glc\}.$$
\noindent
We make the follwing choice of Cartan aubalgebra, root system, positive roots, negative roots and simple roots respectively:
$$\h:=diag(a_{11},\dots, a_{ll},-a_{11},\dots, -a_{ll}),$$  
$$\theta:=\{\pm \varepsilon_i\pm \varepsilon_j:1\le i<j\le l\} \cup \{\pm 2\varepsilon_i: 1\le i \le l\},$$   
$$\theta^+:=\{\varepsilon_i\pm e_j:1\le i<j\le l\}\cup \{ 2\varepsilon_i: 1\le i \le l\},$$ 
$$\theta^-:=\{-\varepsilon_i\pm \varepsilon_j:1\le i<j\le l\}\cup \{-2\varepsilon_i:1\le i \le j\}.$$
$$\Psi:=\{\varepsilon_i-\varepsilon_{i+1}:\,1\le i \le l-1 \}\cup\{2\varepsilon_{l}\}.$$
\noindent
As for the orthogonal groups, consider the diagonal and anti-diagonal $\sl$-embeddings. Furthermore, the positive long roots span the $B$-diagonal, and the negative long roots span the $C$-diagonal.  
\newline
\noindent
\newline
 Let $f=(f_1,\dots, f_s)$ be a dimension sequence. Define the flag manifold of isotropic flags with respect to $Q$ as
 $$Z:=\{\mathcal{F}: \text{ either } F_i\subset F^{\perp}_j \text{ or } F^{\perp}_j\subset F_i,\,\forall 1\le i,j\le s\},$$
 where the orthogonal complement is taken with respect to $Q$. One can then check that $G$ acts transitively on $Z$ and the isotropy subgroup of $G$ and any base point $z\in Z$ is a parabolic subgroup $P_z$. As for all flag manifolds, the variety $Z$ parametrizes the set of parabolic subgroups corresponding to a fixed given dimension sequence $f$. Let $\tilde{z}_0$ be the full isotropic flag associated to the ordered basis $\{e_1,\dots, e_l,e_{2l},\dots, e_{l}\}$ of $\C^{2l}$ and let $B_{\tilde{z}_0}$ denoted the isotropy subgroup of $G$ at $\tilde{z}_0$. Then it is immediate to check that $B_{\tilde{z}_0}$ is the Borel subgroup in $G$ with Lie algebra $\b_{\tilde{z}_0}=\h\bigoplus\oplus_{\alpha \in \theta^+}\g_{\alpha}$, the Borel subalgebra given by our choice of positive roots. Without confusion let $\tilde{z}_0$  also denote the partial isotropic flag in $Z$ corresponding to the dimension sequence $f$ and the ordered basis $\{e_1,\dots, e_l,e_{2l},\dots, e_{l}\}$. Then $B_{\tilde{z}_0}\subset P_{\tilde{z}_0}$ and one has a similar description of $\p_{\tilde{z_0}}$, the Lie algebra of $P_{\tilde{z_0}}$ in terms of simple roots and Levi decomposition as in section \ref{sl}. 
\newline
 \noindent
 \newline
The real form of $G=Sp(2l,\C)$ of importance in Hodge theory is $Sp(a+b,b+a)$, with $a+b=l$. This is defined as the group of isometries in $Sp(2l,\C)$ of the following Hermitian symmetric form on $\C^{2l}$.
\newline
\noindent
\newline
Let $\sigma:\C^{2l}\rightarrow \C^{2l}$ be a complex conjugation on $\C^{2l}$,i.e. an antilinear involution, defined by $$(x_1,\dots,x_l,x_{l+1} \dots,x_{2l})\mapsto (\overline{x}_{l+1},\dots,\overline{x}_{2l},\overline{x}_1,\dots, \overline{x}_l).$$
Let $h:\C^{2l}\times\C^{2l}\rightarrow \C$ be defined by 
$$((x_1,\dots,x_{2l}),(y_1,\dots,y_{2l}))\mapsto -\sum_{i=1}^ax_i\overline{y}_i+\sum_{i=a+1}^{l}x_i\overline{y}_i+\sum_{i=l+1}^{l+a}x_i\overline{y}_i-\sum_{i=l+a+1}^{2l}x_i\overline{y}_i.$$
Then $\sigma$ acts on the standard basis by sending $e_i\mapsto e_{l+i}$, for all $1\le i \le  l$. One obtains a decomposition of $\C^{2l}$ as a direct sum of its positive and negative subspaces, i.e. $$\C^{2l}=E^-\oplus E^+\oplus\overline{E}^-\oplus \overline{E}^+,$$ where $E^-:=<e_1,\dots, e_a>$, $\overline{E}^{-}=<e_{l+1},\dots, e_{l+a}>$ and $E^+:=<e_{a+1},\dots, e_{l}>$, $\overline{E}^+=<e_{l+a},\dots, e_{2l}>$. Note that $\overline{E}^-$ is a positive subspace and $\overline{E}^+$ is a negative subspace. 
\newline
\noindent
\newline
As in the previous case the flag domains are parametrized by signature, see \ref{so} for details and the parabolic subgroups to be considered are those defined by a symmetric dimension sequence $(f_1,f_2,\dots, f_{m+1}, f_{m+1},\dots, f_2,f_1)$.
 \newline
\noindent
\newline
Then any parabolic subalgebra of $\g$ can be visualized by encircling the $\sum_{i=1}^k f_i^{th}$-node for each $1\le k\le m+1$ in the Dynkin diagram of $\g$. For our choice of simple roots and parabolic subalgebra $\p_{\tilde{z}_0}$ corresponding to a symmetric dimension sequence, this nodes correspond to 
$$\mathcal{N}_{\p}:=\{\varepsilon_{f_1}-\varepsilon_{f_2},\varepsilon_{f_1+f_2}-\varepsilon_{f_1+f_2+1}, \dots, \varepsilon_{f_1+\dots+f_{m+1}}-\varepsilon_{f_1+\dots +f_{m+1}+1}\}.$$
\noindent

\section{Main results}
This section is devoted to stating and proving the main results of the paper. Since our work here is somewhat technical we begin with an outline of its structure. \textit{Section} \ref{variations} is a basic introduction to variations of Hodge structures, the definition of period domains and their explicit connection to flag domains as introduced in \textit{Section} \ref{liestructure}. Furthermore, given a base point in a period domain $D\hookrightarrow G/P$, one has a natural way to define a Hodge decomposition of weight $0$ at the Lie algebra level, $\g=\oplus\g^{-p,p}$, which in turn allows one to define a natural involution on the Lie algebra in question.
\newline
\noindent
\newline
\textit{Section} \ref{combinatorics} is concerned with combinatorial structures associated to period domains and it is the starting point to tackle the classification problem (*). This section is divided into three parts each of them being concerned with one type of period domain, even weight and even dimension, odd weight and even dimension and even weight and odd dimension respectively, i.e. flag domains in flag manifolds of $SO(2l,\C)$, $Sp(2l,\C)$ and $SO(2l+1,\C)$ respectively. Each part begins with an explicit description of the flag domain $D\hookrightarrow G/P$ in question together with a combinatorial rule used to choose a special base point $z_0\in D$. Then one looks at the Lie algebra $\g$ of $G$ and analyses its action on $z_0$ via endomorphisms. This produces a block decomposition of any $g\in \g$ easily described with the help of the combinatorial rule used to define $z_0$. Finally, we give a refinement of the Hodge decomposition on $\g$ and represent each $\g^{-p,p}$ piece as a direct sum of blocks. This is done in \textit{Proposition} \ref{orthogonaleven}, \textit{Proposition} \ref{symplectic} and \textit{Proposition} \ref{orthogonalodd} respectively. Furthermore, at the end of each subsection we give a \textit{description of the canonical base cycle} associated to the choice base point $z_0$ together with three \textit{complete examples} for a weight $8$ Hodge structure on a $20$-dimensional vector space, a weight $5$ Hodge structure on a $14$-dimensional vector space and finally a weight $4$-Hodge structure on an $11$-dimensional vector space. 
The three parts of this section are very similar to each other. However, in order not to confuse the readers with indices differing from one Lie group type to the other, we chose to treat each case separately. 
\newline
\noindent
\newline
In \textit{Section} \ref{combinatorics} it is thus shown that certain "blocks" are the building structures for elements of $\g^{-p,p}$ viewed as linear algebraic objects. In \textit{Section} \ref{results} we show that in fact this blocks carry an additional structure and can be viewed as unipotent radicals for given Lie algebra embeddings of $\mathfrak{sl}$, $\mathfrak{so}$ or $\mathfrak{sp}$, respectively in $\g$. The data required to describe such an embedding is encoded in a Hodge triple which is defined in \textit{Definition} \ref{Hodgetriple} and further explained in the paragraph thereafter. \textit{Theorem} \ref{hodgetriple} is the first main result of this paper making explicit this correspondence which is also treated in detail in an example for a weight $4$ Hodge structure on a $12$-dimensional vector space. The basic properties of Hodge triples and their associated Hodge brackets are described in \textit{Lemma} \ref{primalema}, \textit{Lemma} \ref{comutativitate}, \textit{Lemma} \ref{bracketcompact} and \textit{Lemma} \ref{ungrassmann}. The main result of this paper, giving a complete solution to the classification problem (*) is \textit{Theorem} \ref{teoremaprincipala} which shows that a flag domain of Hodge type is a Hermitian symmetric space of non-compact type. In \textit{Corollary} \ref{clasicalgroups} we show that in fact the flag domains of Hodge type are Hermitian symmetric space of non-compact type whose irreducible factors are of classical type (A), (B), (C) or (D) and the embeddings describing each such irreducible factor are precisely the embeddings $f: D' \rightarrow D''$ listed by Satake in his classification.
\newline
\noindent
\newline
Hereafter, we are concerned with special cases of \textit{Theorem} \ref{teoremaprincipala} in the case of an embedding satisfying the infinitesimal period relation and certain combinatorial description of objects involved. \textit{Proposition} \ref{mainproposition} restates the general results concerning the $\g^{-p,p}$ block characterisation for the subspace $\g^{-1,1}$ to explicitely point out the identification of each of its blocks with a tangent space to a certain Grassmannian submanifold, i.e. with a unipotent radical, and recall the explicit description of the Harisch-Chandra coordinates characterizing each block. \textit{Corollary} \ref{dimensionformula} and \textit{Corollary} \ref{generaldimensionformula} give a dimension formula for $\g^{-1,1}$ and for $\g^{-p,p}$, $p$ odd, as a sum of the dimension of the various Grassmannians involved in their block decomposition. 
\newline
\noindent
\newline
In the \textit{Subsection} \textbf{Abelian subspaces of $\g^{-1,1}$} of \textit{Section} \ref{results}, we give a procedure for constructing abelian subspaces of $\g^{-1,1}$ following combinatorially defined objects called Hodge paths and Hodge sequences. Then \textit{Theorem} \ref{maintheorem}, a special case of \textit{Theorem} \ref{teoremaprincipala}, states that any abelian subspace ${\mathfrak{a}}$ of $\g^{-1,1}$ is contained in a bigger abelian subspace $\g^{-1,1}_{\mathfrak{a}}\subset \g^{-1,1}$ constructed using a Hodge path and a Hodge sequence depending on $\a$. Furthermore, with one exception in type (B), the subalgebra $\g^{-1,1}_{\mathfrak{a}}$ can be naturally identified with the tangent space at a base point in $D$ to an equivariently-embedded product of Grassmannian submanifolds $G_1\times G_2\times \dots \times G_s$. For even weight each $G_i$ is of type (A), whereas for odd weight each $G_i$ is of type (A) with one possible member of type (C).
The dimension of $\g_{\mathfrak{a}}^{-1,1}$ is easily computed as a sum of the dimensions of the various Grassmannians and thus provides an upper bound for the dimension of $\mathfrak{a}$. In the \textit{Subsection} \textbf{Abelian subspaces of $\g^{-p,p}$} we carry out similar concrete combinatorial descriptions for general $p$ odd.

\subsection{Variations of Hodge structure}\label{variations}
Let $V$ denote a finite dimensional $\R$- (or $\Q$-) vector space and $V_{\C}:=V\otimes_{\R} \C$, and $\sigma$ the associated complex conjugation on $V_\C$. If $W\subset V_\C$, then $\overline{W}:=\sigma(W)$. Furthermore, let $p,q,n\in \Z$. A \textbf{Hodge decomposition} of \textit{weight} $n$ and \textit{type} $(p,q)$ on $V$ is a decomposition of $V_\C$ as $$V_\C=\bigoplus_{p+q=n}V^{p,q},$$ such that $V^{q,p}=\overline{V}^{p,q}$. The numbers $h^{p,q}:=\dim{V^{p,q}}$ are called \textbf{Hodge numbers}.\newline
\noindent
Given a Hodge decomposition one obtains a \textbf{Hodge filtration}, a partial flag $$\mathcal{F}: 0\subset F^{n}\subset \dots \subset F^{1}\subset F^{0}=V_\C,$$ in $V_\C$, defined by $F^{i}:=\bigoplus_{p=i}^nV^{p,n-p}$ and $F^{i}\oplus \overline{F}^{n-i+1}=V_{\C}$. Note that in this context we prefer to use a decreasing sequence of indices to denote the components of a flag, in contrast with the increasing sequence used in section \ref{liestructure}.
\newline
\noindent
\newline
There are two equivalent definition of a polarised Hodge structure of weight $n$ as follows. A polarised Hodge structure of weight $n$ is given by a Hodge decomposition, or a Hodge filtration together with a bilinear form $Q$ on $V$, symmetric or antisymmetric depending on wether $n$ is even or odd respectively, such that the Hodge-Riemann bilinear relations are satisfied. In terms of the Hodge filtration, these are expressed as: 
$$\begin{cases}
Q(F^p,F^{n-p+1})=0\\
Q(v,C\overline{v}>0),
\end{cases}
$$
where $C$ is the \textit{Weil operator} of multiplication by $i$ defined on $V_{\C}$ by $v\mapsto i^{p-q}v$, for all $v\in V^{p,q}$.
In terms of the Hodge decomposition, the Hodge-Riemann bilinear relations say that the Hodge decomposition is orthogonal relative to the Hermitian form $h$ on $V_{C}$ defined by 
$$\begin{cases}
h(v_1,v_2)=Q(v_1,C\overline{v}_2)\\
h \text{ is positive-definite}.
\end{cases}
$$
Let $\{h^{p,q}:\, p,q\in \Z\}$ be a given set of Hodge numbers for a weight $n$ and a Hodge decomposition on $V$, i.e. $p+q=n$, $\dim V^{p,q}=h^{p,q}$, $h^{p,q}=h^{q,p}$ and $\dim V=\sum_{p=0}^n h^{p,q}$. Then the \textbf{period domain} $D$ associated to the above data is the set of polarised Hodge structures $(V,Q)$ with the given Hodge numbers. 
\newline
\noindent
\newline
Let $f^{i}:=\sum_{p\ge i}h^{p,n-p}$ denote the dimension of $F^{i}$, the $i^\text{th}$-component of the associated Hodge filtration. Then the compact dual is the set of flags $\{\mathcal{F}: 0\subset F^n\subset \dots \subset F^0=V_\C\}$ such that $\dim F^p=f^p$ and which satisfy the first Hodge-Riemann bilinear relation: $Q(F^p,F^{n-p+1})=0$. 
Identify $V_\C$ with $\C^k$ by choosing a basis. 
\newline
\noindent
\newline
Let $2l$ or $2l+1$ denote the dimension of $V$ depending on whether the dimension is even or odd respectively.
The data required to define the period domain and its compact dual is equivalent, in terms of language associated to flag manifolds and flag domains, to the following. The weight $n$ defines the complex simple Lie group $G$ of isometries of $\C^{2l}$ or $\C^{2l+1}$ respectively, which preserve the bilinear form $Q$. In the case of $n$ even one obtains $G=SO(2l,\C)$ or $G=SO(2l+1,\C)$, while in the case of $n$ odd, one obtains $G=Sp(2l,\C)$. 
\newline
\noindent
\newline
Specifying a set of Hodge numbers is equivalent to specifying a symmetric dimension sequence for a parabolic subgroup P. Together with the first Hodge-Riemann bilinear relation this amounts to identifying the compact dual of a period domain with the flag manifold $G/P$ of isotropic flags with respect to the given bilinear form $Q$. The second Hodge-Riemann bilinear relation is an open condition which identifies the period domain with a flag domain $D$. The real form acting on $D$ is given as the group of isometries in $G$ preserving the given Hermitian form $h$ and the signature sequence parametrising the flag domain can be read from the signature of the base Hodge structure.  
\newline
\noindent
\newline
Conversely, given a symmetric dimension sequence $$(f_0,\dots, f_{m-1},f_m,f_{m-1},\dots, f_0)$$  and a sequence of pluses and minuses parametrising a flag domain such that each block has constant sign, one defines inductively a variation of Hodge structure of weight n (to be defined). First, one defines a virtual sequence $f'$ from the initial sequence $f$ as follows: start with the sequence $f'$ being equal to $f$. Then if the block corresponding to $f'_i$ has the same sign as the block corresponding to $f'_{i-1}$, shift the indices of $f'_j$ for all $j\ge i$ by one, and introduce a new $f'_i$ which is set to be zero. Then the weight $n$ is given by $2m$ plus $2\times$(number of shifts) and $h^{n-i,i}:=f'_i$. If at the end of the process the sequence $f'$ equals the sequence $f$, then we have a Hodge structure with no gaps among its Hodge numbers. If otherwise, we have a Hodge structure with gaps.  
\newline
\noindent
\newline
So we see that each classifying space for variation of Hodge structures of even weight with given hodge numbers corresponds to an open $SO(p,q)$ orbit, for a certain $p$ and $q$. Conversely, each symmetric open $SO(p,q)$ orbit will correspond to a variation of Hodge structure for some weight and some hodge numbers. The same discussion applies to a dimension sequence $$(f_0,\dots, f_{m},f_{m},\dots, f_0),$$ where one obtains a one two one correspondence between $Sp(n,\R)$-open orbits with constant sign in each block and Hodge structures of odd weight.  This correspondence is described in more detail in \textit{section} \ref{combinatorics}.
\newline
\noindent
\newline
Choose a base point $z_0$ in $D$, i.e. a Hodge filtration corresponding to a Hodge decomposition $V_\C:=\oplus_{p+q=n}V^{p,q}$ such that $V^{q,p}=\overline{V}^{p,q}$, with $P$  the isotropy subgroup of $G$ at $z_0$. Then one can show that $P\cap G_0$, the isotropy subgroup of $G_0$ at $z_0$, is a compact subgroup of $G_0$ containing a compact maximal torus $T_0$ of $G_0$. Let $K_0$ denote the unique maximal compact subgroup of $G_0$ containing $P\cap G_0$, $\k_0$ its Lie algebra and $\theta$ the Cartan involution on $\g_0$ which is equal to the identity on $\k_0$ and $-$ the identity on $\mathfrak{m}_0$, the orthogonal complement of $k_0$ in $\g_0$ (with respect to the Killing form).   
\newline
\noindent
\newline
The base Hodge decomposition on $V_\C$ induces a Hodge decomposition of weight 0 on the Lie algebra $\g$ of $G$ defined by $\g:=\oplus_{f=-n}^{f=n}\g^{-f,f}, $ where for fixed $f$, $\g^{-f,f}$ is defined as the set of endomorphisms $g\in \g$ of $V_{\C}$ which act on the Hodge decomposition by sending $V^{p,q}$ to $V^{p-f,q+f}$, for all $p,q$ with $p+q=n$. As such the Lie algebra $\p$ of $P$ decomposes as $\p=\oplus_{f=-n}^0 \g^{-f,f}$, with Levi component $\mathfrak{s}:=\g^{0,0}$, unipotent radical $\mathfrak{u}^+:=\oplus _{f=-n}^{-1}\g^{-f,f}$ and opposite unipotent radical $\mathfrak{u}^{-}:=\oplus_{f=1}^n \g^{-f,f}$. The holomorphic tangent space of $D$ at $z_0$ can be naturally identified with $\g/\p$ which in turn can be naturally identified with $\mathfrak{u}^-$. Furthermore, if we denote by $\k$ and $\mathfrak{m}$ respectively, the complexification of $\k_0$ and $\mathfrak{m}_0$ respectively, and simply by $\theta$ the complex linear extension of $\theta$ to $\g$, then one can write $\g$ as a direct sum $\g=\k\oplus\mathfrak{m}$, where $\k=\oplus_{f \equiv 0 (\text{mod } 2)}\g^{-f,f}$ and $\mathfrak{m}=\oplus_{f\equiv 1(\text{mod }2)}\g^{-f,f}$. This decomposition is described in detail in \textit{section} \ref{combinatorics}.
\newline
\noindent
\newline
Denote by $\g^{-1,1}_B$, $\g^{-1,1}_C$, and $\g^{-1,1}_D$, the $\g^{-1,1}$-subspace for $\g=\socc$, $\g=\spc$, and $\g=\soc$ respectively. 

\subsection{Combinatorial structures associated to period domains}\label{combinatorics}
\textbf{Combinatorics of blocks in the case of $SO(2l,\mathbb{C})$.}
\newline
\noindent
\newline
Let $V$ be a finite dimensional $\R$- (or $\Q$-) vector space of dimension $2l$,  and consider a Hodge decomposition on $V_\C$ with $f_i:=h^{n-i,i}$, for all $0\le i \le m$, for a weight $n=2m$. By a choice of bases identify $V_\C$ with $\C^{2l}$ and fix $\{e_1,\dots,e_{2l}\}$ the ordered standard basis. 
\newline
\noindent
\newline
Let $a:=\sum_{i\equiv 0 (\text{mod } 2)} f_i$ and $b:=\sum_{i\equiv 1 (\text{mod } 2)} f_i$ and consider the polarisation described in section \ref{liestructure} so that the sequence $(f_0,\dots, f_{m-1},f_m,f_{m-1},\dots,f_0)$ up to nonzero elements, identifies the period domain associated to this set of Hodge numbers, with a flag domain of $SO(2a-f_m,2b)$ or $SO(2a, 2b-f_m)$, depending wether $m$ is even or odd, in the flag manifold of $SO(2l,\C)$. If $i\equiv 0 (\text{mod } 2)$, then the block $f_i$ has positive signature, and if $i\equiv 1 (\text{mod 2})$, then the block $f_i$ has negative signature. Let  
\begin{equation}\label{decompozitie1}
 \{1,\dots, a\}=
\begin{cases}
\mathcal{I}_0\cup \mathcal{I}_2\cup\dots \mathcal{I}_m^1,\, m \text{ even},\\
\mathcal{I}_0\cup \mathcal{I}_2\cup\dots \mathcal{I}_{m-1},\, m \text{ odd},
\end{cases}
\end{equation}
\begin{equation}\label{decompozitie2}
\{a+1,\dots, l\}=
\begin{cases}
\mathcal{I}_1\cup \mathcal{I}_3\cup\dots \mathcal{I}_{m-1},\, m \text{ even},\\
\mathcal{I}_1\cup \mathcal{I}_3\cup\dots \mathcal{I}_m^1,\, m \text{ odd},
\end{cases}
\end{equation}
be an order-preserving partition of the sets $\{1,\dots, a\}$ and $\{a+1,\dots, l\}$ such that the cardinality of each $\mathcal{I}_j$ equals $f_j$, for all $1\le j\le m-1$ and $|\mathcal{I}_m^1|=[f_m/2]$. Similarly, let  
\begin{equation}\label{decompo3}
 \{l+1,\dots,l+ a\}=
\begin{cases}
\mathcal{I}_{2m}\cup \mathcal{I}_{2m-2}\cup\dots \mathcal{I}_m^2,\, m \text{ even},\\
\mathcal{I}_{2m}\cup \mathcal{I}_{2m-2}\cup\dots \mathcal{I}_{m+1},\, m \text{ odd},
\end{cases}
\end{equation}
\begin{equation}\label{decompo4}
\{l+a+1,\dots, 2l\}=
\begin{cases}
\mathcal{I}_{2m-1}\cup \mathcal{I}_{2m-3}\cup\dots \mathcal{I}_{m+1},\, m \text{ even},\\
\mathcal{I}_{2m-1}\cup \mathcal{I}_{2m-3}\cup\dots \mathcal{I}_m^2,\, m \text{ odd},
\end{cases}
\end{equation}
be an order-preserving partition of the sets $\{l+1,\dots, l+a\}$ and $\{l+a+1,\dots, 2l\}$ such that the cardinality of each $\mathcal{I}_j$ equals $f_{n-j}$, for all $m+1\le j\le 2m$ and $|\mathcal{I}_m^2|=[f_m/2]$.
\newline
\noindent
\newline
Associated to the partitions (\ref{decompozitie1}), (\ref{decompozitie2}), (\ref{decompo3}), (\ref{decompo4}), consider the equivalent partition of the standard ordered basis in $\C^{2l}$ and choose the corresponding Hodge filtration $z_0\in D$ (equivalently Hodge decomposition $V_0$) as a base point in $D$. That is, $V^{n-k,k}$ is spanned by elements of the standard ordered basis indexed by $\mathcal{I}_k$, for all $k\ne m$. The middle member of the base Hodge decomposition, $V^{m,m}=\overline{V}^{m,m}$, is spanned by elements of the standard ordered basis indexed by $\mathcal{I}_m^1\cup{\mathcal{I}}_m^2$. 
\newline
\noindent
\newline
Each element $g\in \g$ acts on $V_\C$ via a linear transformation and after fixing the standard basis in $\C^{2l}$, the complex conjugation and polarisation as in section \ref{liestructure}, $g$ has a decomposition as a $2\times2$-block matrix 

$$g=\{\begin{pmatrix}
A & B \\
C & - A^{t}
\end{pmatrix}: \, B+B^{t}=0, C+ C^{t}=0, \,A,B,C\in\glc\}.$$
\noindent
The first purpose of this section is to further analyse the action of $\g$ on $V_\C$ via endomorphisms and further decompose the blocks $A$ (and implicitly $-A^t$), $B$ and $C$ of $g\in \g$, by looking at how $g$ acts on each subspace of the Hodge decomposition $V_0$. For this recall that an element $g$ in $\g$ acting on an element $e_j$ of the standard basis gives as a result the $j^{th}$ column of $g$, for all $j$. 
\newline
\noindent
\newline
Let $\mathcal{J}_1:=\{0,2,\dots,m,1,3,\dots,m-1\}$ for $m$ even and $\mathcal{J}_1:=\{0,2,\dots,m-1,1,3,\dots,m\}$ for $m$ odd respectively, denote the ordered set obtained by first considering the ordered even integers and then the ordered odd integers of the set $\{0,1,2,\dots,m\}$. Similarly, let $\mathcal{J}_2:=\{2m,\dots,m,2m-1,\dots,m+1\}$ for $m$ even and $\mathcal{J}_2:=\{2m,\dots,m+1,2m-1,\dots,m\}$ for $m$ odd respectively, denote the ordered set obtained by first considering the ordered even integers and then the ordered odd integers of the set $\{2m,2m-1,\dots,m\}$. 
\newline
Write $A$, $B$, $C$ and $(-A^t)$ respectively, as $(m+1)\times (m+1)$-block matrices, with rows and columns indexed by the ordered sets $\mathcal{J}_1\times \mathcal{J}_1$, $\mathcal{J}_1\times \mathcal{J}_2$, $\mathcal{J}_2\times \mathcal{J}_1$ and $\mathcal{J}_2\times \mathcal{J}_2$ respectively. 
\newline
The blocks $A_{ij}$, $B_{ij}$, $C_{ij}$ and $(-A_{ij}^t)$ respectively are of type $f_i\times f_j$, $f_i\times f_{2m-j}$, $f_{2m-i}\times f_{j}$ and $f_{2m-i}\times f_{2m-j}$ respectively, for all $i,j$ such that $i\ne m$. For $i=m$, these blocks are of type $f_m/2\times f_j$, $f_m/2\times f_{2m-j}$, $f_m/2\times f_j$, and $f_m/2\times f_{2m-j}$ respectively.
\newline
By symmetry $A_{ij}^t$ is identified with $-A_{n-j,n-i}^t$, $C_{ij}$ is identified with $C_{n-j,n-i}$ and $B_{ij}$ with $B_{n-j,n-i}$. For convenience of writing, denote the pair $(A,-A^t)$ simply by $A$ and the pairs $(A_{ij},-A_{n-j,n-i}^t)$, $(C_{ij},C_{n-j,n-i})$ and $(B_{ij},B_{n-j,n-i})$ respectively by $A_{ij}$, $C_{ij}$, and $B_{ij}$ respectively. One can think of each of  this blocks as elements of $\g$ by requiring any other entry in $\g$ outside the block to be zero. Furthermore, one can define the direct sum of two blocks, say for example $A_{ij}\oplus C_{fl}$, to be the element in $\g$ with non-zero entries only inside the blocks $A_{ij}$ and $C_{fl}$.
\newline
\noindent
\newline
We now look at how each such block (viewed as an element in $\g$) acts on the members of the Hodge decomposition. More precisely we have the following two cases: 
\begin{enumerate}
\item {The rows and columns of $A_{ij}$, $B_{ij}$, $C_{ij}$ and $(-A^t_{ij})$ respectively are indexed by $\mathcal{I}_i\times \mathcal{I}_j$ with the pair $(i,j)$ in the indexed set $\mathcal{J}_1\times \mathcal{J}_1$,$\mathcal{J}_1\times \mathcal{J}_2$, $\mathcal{J}_2\times \mathcal{J}_1$ and $\mathcal{J}_2\times \mathcal{J}_2$, respectively and this are precisely the blocks which map $V^{n-j,j}$ to $V^{n-i,i}$ for all $i,j$ not equal to $m$.}
\item{For $i=m$ or $j=m$, the blocks $A_{ij}$, $B_{ij}$, $C_{ij}$ and $(-A^t_{ij})$ respectively are one of the two blocks which map $V^{n-j,j}$ to $V^{n-i,i}$.}
\end{enumerate}
\noindent
Next we use the above decomposition to give a block decomposition of $\mathfrak{g}^{-p.p}$ for each $p\in \{-n,\dots,0,\dots,n\}$. By definition $\g^{0,0}$ is the set of endomorphisms in $\g$ which preserve each member of the Hodge decomposition, i.e. the Levi complement of the parabolic subalgebra $\p$. From the above description of the blocks action on the Hodge decomposition, this corresponds to $$\g^{0,0}=\bigoplus_{i\in \mathcal{J}_1}A_{ii}\bigoplus C_{m,m}\bigoplus B_{m,m},$$ i.e. to the diagonal blocks of $A$ together with the middle block of the diagonal in $C$ and $B$.
\newline
\noindent
\newline
Let $\h_{ii}$ denote the diagonal matrices inside the block $A_{ii}$, for all $0\le i \le m$. Then $$\h=\bigoplus_{i=0}^{m}\h_{ii},$$
and as a consequence $\h\subset \g^{0,0}$. 
\begin{prop}\label{orthogonaleven}
The $\g^{-p,p}$ piece can be decomposed as a direct sum of blocks, each indexed by pairs $(i,j)$ such that $i-j=p$, as follows:
\begin{itemize}
\item{
If $p>0$, then the subspace $\g^{-p,p}$, is the direct sum of $n-p+3$ blocks or $n-p+1$ blocks respectively, corresponding to the endomorphisms $V^{n-j,j}\mapsto V^{n-j-p,j+p}$, for all $0\le j \le n-p$, their total number depending respectively on wether the endomorphisms involve the $V^{m,m}$ member or not. If $j<m$ and $j+p<m$, then the blocks corresponding to these endomorphisms are $A_{j+p,j}$ and if $j>m$ and $j+p>m$, then the blocks corresponding to these endomorphisms are $(-A_{j+p,j}^t)$. If $j<m$ and $j+p>m$, then the blocks corresponding to these endomorphisms are $C_{j+p,j}$. If $j<m$ and $j+p=m$, then the endomorphism is given by the blocks $A_{m,j}\oplus C_{m,j}$ and if $j=m$ and $j+p>m$, then the endomorphism is given by the blocks $(-A_{j+p,j}^t)\oplus C_{j+p,j}$.Thus, in one formula we have:
$$\g^{-p,p}=\bigoplus_{0\le j< j+p\le m}A_{j+p,j}\bigoplus_{0\le j \le m, j+p\ge m}C_{j+p,j}  \bigoplus_{m\le j< j+p \le n}(-A_{j+p,j}^t).$$}
\item{
If $p<0$, then the subspace $\g^{-p,p}$, is the direct sum of $n+p+3$ blocks or $n+p+1$ blocks corresponding to the endomorphisms $V^{n-j+p,j-p}\mapsto V^{n-j,j}$, for all $0\le j \le n+p$, their total number depending respectively on wether the endomorphisms involve the $V^{m,m}$ member or not. 
Thus, in one formula, we have:
$$\g^{-p,p}=\bigoplus_{0\le j< j-p\le m}A_{j,j-p}\bigoplus_{0\le j\le m, j-p\ge m}B_{j,j-p}  \bigoplus_{m\le j < j-p \le n}(-A_{j,j-p}^t).$$}
\end{itemize}
\end{prop}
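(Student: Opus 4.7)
The plan is to reduce the proposition to a bookkeeping argument on how a block matrix in the standard form $\begin{pmatrix} A & B \\ C & -A^t\end{pmatrix}$ acts on the distinguished basis of $V_\C$. Concretely, since the $(r,s)$-entry of such a $g$ contributes a multiple of $e_r$ to $g(e_s)$, the condition $g\in \g^{-p,p}$, i.e.\ $g(V^{n-j,j})\subset V^{n-j-p,j+p}$ for every admissible $j$, is equivalent to requiring all nonzero entries $(r,s)$ of $g$ to satisfy: the column index $s$ belongs to $\mathcal{I}_j$ (or to $\mathcal{I}_m^1\cup \mathcal{I}_m^2$ if $j=m$) and the row index $r$ belongs to the analogous set indexed by $j+p$. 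Consequently, $\g^{-p,p}$ decomposes as the direct sum of precisely those sub-blocks of $A$, $B$, $C$, $-A^t$ whose row and column index sets are distributed as prescribed.

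First I would handle the generic case where neither $j$ nor $j+p$ equals $m$. The rows and columns $\{1,\dots,l\}$ of the $A$-corner are partitioned by $\{\mathcal{I}_k\}_{k\in\mathcal{J}_1}$, while the rows and columns $\{l+1,\dots,2l\}$ of the $-A^t$-corner are partitioned by $\{\mathcal{I}_k\}_{k\in\mathcal{J}_2}$; the blocks $B$ and $C$ connect the two halves off-diagonally. Thus for $p>0$: when $0\le j<j+p<m$ both index sets sit inside the $A$-corner, yielding $A_{j+p,j}$; when $m<j<j+p\le n$ both sit inside the $-A^t$-corner, yielding $(-A_{j+p,j}^t)$; and when $j<m<j+p$ the columns lie in the $A$-side while the rows lie in the $-A^t$-side, yielding $C_{j+p,j}$. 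The case $p<0$ is obtained by reversing the roles of row and column and replacing $C$ by $B$, since the arrows now run from the $\mathcal{J}_2$-corner back to the $\mathcal{J}_1$-corner.

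The main obstacle is the middle case where either $j$ or $j+p$ equals $m$. Here $V^{m,m}$ is spanned by basis vectors indexed by $\mathcal{I}_m^1\cup \mathcal{I}_m^2$ with $\mathcal{I}_m^1\subset\{1,\dots,l\}$ and $\mathcal{I}_m^2\subset\{l+1,\dots,2l\}$, so any endomorphism into or out of $V^{m,m}$ is simultaneously recorded by two blocks that lie on opposite sides of the horizontal or vertical midline of the matrix. For example, if $j<m$ and $j+p=m$, the target basis vectors split between the two halves, forcing an $A_{m,j}$-contribution (rows in $\mathcal{I}_m^1$) together with a $C_{m,j}$-contribution (rows in $\mathcal{I}_m^2$); this is exactly the summand $A_{m,j}\oplus C_{m,j}$ in the statement. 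The case $j=m$, $j+p>m$ is symmetric and produces $(-A_{j+p,m}^t)\oplus C_{j+p,m}$.

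Finally, I would verify that the defining antisymmetry constraints $B+B^t=0$, $C+C^t=0$, and the $A\leftrightarrow -A^t$ pairing are consistent with the decomposition. This is automatic from the identifications $A_{ij}\leftrightarrow -A_{n-j,n-i}^t$, $B_{ij}\leftrightarrow B_{n-j,n-i}$ and $C_{ij}\leftrightarrow C_{n-j,n-i}$ introduced before the statement, because the involution $(i,j)\mapsto (n-j,n-i)$ preserves the difference $i-j$, and therefore preserves membership in $\g^{-p,p}$. Collecting the admissible blocks over all valid $j$ yields the announced direct sum decompositions, and counting the number of endomorphisms $V^{n-j,j}\to V^{n-j-p,j+p}$ for $0\le j \le n-p$, together with the splittings at $j=m$ and $j+p=m$, recovers the block counts $n-p+3$ or $n-p+1$ depending on whether $V^{m,m}$ is involved.
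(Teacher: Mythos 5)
Your proposal is correct and follows essentially the same route as the paper: the paper gives no separate proof environment for this proposition, since it is exactly the bookkeeping you carry out — the base point is chosen so that each $V^{n-k,k}$ is spanned by basis vectors indexed by $\mathcal{I}_k$ (with $\mathcal{I}_m^1\cup\mathcal{I}_m^2$ for $V^{m,m}$), columns of $g$ record images of basis vectors, and the blocks $A_{ij}$, $B_{ij}$, $C_{ij}$, $(-A^t_{ij})$ are by construction the pieces mapping $V^{n-j,j}$ to $V^{n-i,i}$, with the doubling at $i=m$ or $j=m$ giving the $A\oplus C$ (resp.\ $(-A^t)\oplus C$) summands and the counts $n-p+1$ versus $n-p+3$. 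Your additional remark that the involution $(i,j)\mapsto(n-j,n-i)$ preserves $i-j$, so the antisymmetry identifications are compatible with the grading, matches the paper's convention of treating each identified pair as a single block.
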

\noindent
\textbf{Example}. Let $l=10$, the group $G=SO(20,\C)$, the weight $n=8$ and thus $m=4$. It thus follows that the Hodge decomposition contains $9$ members, namely $$V_\C=V^{8,0}\oplus V^{7,1}\oplus V^{6,2}\oplus V^{5,3}\oplus V^{4,4}\oplus V^{3,5},\oplus V^{2,6}\oplus V^{1,7}\oplus V^{0,8}$$
and the signature sequence is given by $$(i^{8-0},i^{7-1},i^{6-2},i^{5-3},i^{4-4},i^{3-5},i^{2-6},i^{1-7},i^{0-8}),$$ which after computing the powers is $+-+-+-+-+$.
\noindent
Consider the following set of Hodge numbers: $h^{8,0}=2$, $h^{7,1}=3$, $h^{6,2}=2$, $h^{5,3}=1$, $h^{4,4}=4$. It thus follows that $G_0=SO(12,8)$ and the period domain is the $G_0$-open orbit parametrized by the signature sequence $+-+-+-+-+$ in the flag manifold $G/P$ parametrized by the dimension sequence $(2,3,2,1,4,1,2,3,2)$. The base point in $D$ is the Hodge filtration associated to the following Hodge decomposition: $V^{8,0}=<e_1,e_2> $, $V^{7,1}=<e_7,e_8,e_9>$, $V^{6,2}=<e_3,e_4>$, $V^{5,3}=<e_{10}>$, $V^{4,,4}=<e_5,e_6,e_{15},e_{16}>$, $V^{3,5}=<e_{20}>$,$V^{2,6}=<e_{13},e_{14}>$, $V^{1,7}=<e_{17},e_{18},e_{19}>$, $V^{0,8}=<e_{11},e_{12}>$.
Since $m=4$, it follows that $\mathcal{J}_1=\{0,2,4,1,3\}$, $\mathcal{J}_2=\{8,6,4,7,5\}$ and the blocks $A$, $B$, $C$ and $(-A^t)$ of any element $g\in \mathfrak{so}(20,\C)$ are further divided into $5\times 5$ block matrices.   

$$g=\begin{pmatrix}
A_{00} & A_{02} & A_{04} & A_{01} & A_{03} & B_{08} & B_{06} & B_{04} & B_{07} & B_{05} \\
A_{20} & A_{22} & A_{24} & A_{21} & A_{23} & B_{28} & B_{26} & B_{24} & B_{27} & B_{25} \\
A_{40} & A_{42} & A_{44} & A_{41} & A_{43} & B_{48} & B_{46} & B_{44} & B_{47} & B_{45} \\
A_{10} & A_{12} & A_{14} & A_{11} & A_{13} & B_{18} & B_{16} & B_{14} & B_{17} & B_{15} \\
A_{30} & A_{32} & A_{34} & A_{31} & A_{33} & B_{38} & B_{36} & B_{34} & B_{37} & B_{35} \\
C_{80} & C_{82} & C_{84} & C_{81} & C_{83} & -A_{88}^t & -A^t_{86} & -A^t_{84} & -A^t_{87} & -A^t_{85} \\
C_{60} & C_{62} & C_{64} & C_{61} & C_{63} & -A_{68}^t & -A^t_{66} & -A^t_{64} & -A^t_{67} & -A^t_{65} \\
C_{40} & C_{42} & C_{44} & C_{41} & C_{43} & -A_{48}^t & -A^t_{46} & -A^t_{44} & -A^t_{47} & -A^t_{45} \\
C_{70} & C_{72} & C_{74} & C_{71} & C_{73} & -A_{78}^t & -A^t_{76} & -A^t_{74} & -A^t_{77} & -A^t_{75} \\
C_{50} & C_{52} & C_{54} & C_{51} & C_{53} & -A_{58}^t & -A^t_{56} & -A^t_{54} & -A^t_{57} & -A^t_{55} 
\end{pmatrix}.$$
The $\k\oplus\mathfrak{m}$ decomposition and the decomposition of each $\g^{-p,p}$ into blocks is illustrated in the following equivalent matrix picture. The subalgebra $\k$ is isomorphic to the product $\mathfrak{so}(12,\C)\times \mathfrak{so}(8,\C)$, where the two factors are pictured with red and blue respectively. One observation is that choosing the standard basis in $\C^{2l}$ and the particular choice of indexing the blocks characterising the $\g^{-p,p}$-pieces gives a particularly nice matrix representation of the $\k\oplus\mathfrak{m}$ decomposition. Furthermore, the $\g^{-p,p}$-pieces can be visualised as diagonals in this matrix representations as the following picture shows. 
$$g=\begin{pmatrix}
\color{red}{\mathfrak{g}^{0,0}} & \color{red}{\mathfrak{g}^{2,-2}} & \color{red}{\mathfrak{g}^{4,-4}} & \mathfrak{g}^{1,-1} & \mathfrak{g}^{3,-3} & \color{red}{\mathfrak{g}^{8,-8}} & \color{red}{\mathfrak{g}^{6,-6}} & \color{red}{\mathfrak{g}^{4,-4}} & \mathfrak{g}^{7,-7} & \mathfrak{g}^{5,-5} \\
\color{red}{\mathfrak{g}^{-2,2}} & \color{red}{\mathfrak{g}^{0,0}} & \color{red}{\mathfrak{g}^{2,-2}} & \mathfrak{g}^{-1,1} & \mathfrak{g}^{1,-1} & \color{red}{\mathfrak{g}^{6,-6}} & \color{red}{\mathfrak{g}^{4,-4}} & \color{red}{\mathfrak{g}^{2,-2}} & \mathfrak{g}^{5,-5} & \mathfrak{g}^{3,-3} \\
\color{red}{\mathfrak{g}^{-4,4}} & \color{red}{\mathfrak{g}^{-2,2}} & \color{red}{\mathfrak{g}^{0,0}} & \mathfrak{g}^{-3,3} & \mathfrak{g}^{-1,1} & \color{red}{\mathfrak{g}^{4,-4}} & \color{red}{\mathfrak{g}^{2,-2}} & \color{red}{\mathfrak{g}^{0,0}} & \mathfrak{g}^{3,-3} & \mathfrak{g}^{1,-1} \\
\mathfrak{g}^{-1,1} & \mathfrak{g}^{1,-1} & \mathfrak{g}^{3,-3} & \color{blue}{\mathfrak{g}^{0,0}} & \color{blue}{\mathfrak{g}^{2,-2}} & \mathfrak{g}^{7,-7} & \mathfrak{g}^{5,-5} & \mathfrak{g}^{3,-3} & \color{blue}{\mathfrak{g}^{6,-6}} & \color{blue}{\mathfrak{g}^{4,-4}} \\
\mathfrak{g}^{-3,3} & \mathfrak{g}^{-1,1} & \mathfrak{g}^{1,-1} & \color{blue}{\mathfrak{g}^{-2,2}} & \color{blue}{\mathfrak{g}^{0,0}} & \mathfrak{g}^{5,-5} & \mathfrak{g}^{3,-3} & \mathfrak{g}^{1,-1} & \color{blue}{\mathfrak{g}^{-4,4}} & \color{blue}{\mathfrak{g}^{-2,2}} \\
\color{red}{\mathfrak{g}^{-8,8}} & \color{red}{\mathfrak{g}^{-6,6}} & \color{red}{\mathfrak{g}^{-4,4}} & \mathfrak{g}^{-7,7} & \mathfrak{g}^{-5,5} & \color{red}{\mathfrak{g}^{0,0}} & \color{red}{\mathfrak{g}^{-2,2}} & \color{red}{\mathfrak{g}^{-4,4}} & \mathfrak{g}^{-1,1} & \mathfrak{g}^{-3,3} \\
\color{red}{\mathfrak{g}^{-6,6}} & \color{red}{\mathfrak{g}^{-4,4}} & \color{red}{\mathfrak{g}^{-2,2}} & \mathfrak{g}^{-5,5} & \mathfrak{g}^{-3,3} & \color{red}{\mathfrak{g}^{2,-2}} & \color{red}{\mathfrak{g}^{0,0}} & \color{red}{\mathfrak{g}^{-2,2}} & \mathfrak{g}^{1,-1} & \mathfrak{g}^{-1,1} \\
\color{red}{\mathfrak{g}^{-4,4}} & \color{red}{\mathfrak{g}^{-2,2}} & \color{red}{\mathfrak{g}^{0,0}} & \mathfrak{g}^{-3,3} & \mathfrak{g}^{-1,1} & \color{red}{\mathfrak{g}^{4,-4}} & \color{red}{\mathfrak{g}^{2,-2}} & \color{red}{\mathfrak{g}^{0,0}} & \mathfrak{g}^{3,-3} & \mathfrak{g}^{1,-1} \\
\mathfrak{g}^{-7,7} & \mathfrak{g}^{-5,5} & \mathfrak{g}^{-3,3} & \color{blue}{\mathfrak{g}^{-6,6}} & \color{blue}{\mathfrak{g}^{-4,4}} & \mathfrak{g}^{1,-1} & \mathfrak{g}^{-1,1} & \mathfrak{g}^{-3,3} & \color{blue}{\mathfrak{g}^{0,0}} & \color{blue}{\mathfrak{g}^{-2,2}} \\
\mathfrak{g}^{-5,5} & \mathfrak{g}^{-3,3} & \mathfrak{g}^{-1,1} & \color{blue}{\mathfrak{g}^{-4,4}} & \color{blue}{\mathfrak{g}^{-2,2}} & \mathfrak{g}^{3,-3} & \mathfrak{g}^{1,-1} & \mathfrak{g}^{-1,1} & \color{blue}{\mathfrak{g}^{2,-2}} & \color{blue}{\mathfrak{g}^{0,0}} 
\end{pmatrix}.$$
In general, if $G_0=SO(2a-f_m,2b)$, then $K=SO(2a-f_m,\C)\times SO(2b,\C)$ and if $G_0=SO(2a,2b-f_m)$, then $K=SO(2a,\C)\times SO(2b-f_m/2,\C)$. Let $V_\C^{+}=\oplus_{p\equiv 0 (\text{mod } 2)} V^{n-p,p}$ and $V_\C^{-}=\oplus_{p\equiv 1 (\text{mod } 2)} V^{n-p,p}$. Then the minimal dimensional $K$-orbit in $Z$ is the set of isotropic flags in $Z$ which, roughly speaking, have optimal intersection with $V_{\C}^+$ and $V_{\C}^-$, i.e. flags $\mathcal{F}$ such that $\dim F^{n-p}\cap V_{\C}^+=  \oplus_{i\equiv 0 (\text{mod } 2)} f_i $ and $\dim F^{n-p}\cap V_{\C}^-=  \oplus_{i\equiv 1 (\text{mod } 2)} f_i $, for all $0\le p <m$.
\newline
\noindent
\newline
\textbf{Combinatorics of blocks in the case of $Sp(2l,\mathbb{C})$.}
\newline
\noindent
\newline
Let $V$ be a finite dimensional $\Q$-vector space of dimension $2l$,  and consider a Hodge decomposition on $V_\C$ with $f_i:=h^{n-i,i}$, for all $0\le i \le m$, for a weight $n=2m+1$. By a choice of bases identify $V_\C$ with $\C^{2l}$ and fix $\{e_1,\dots,e_{2l}\}$ the ordered standard basis. 
\newline
\noindent
\newline
Let $a:=\sum_{i\equiv 0 (\text{mod } 2)} f_i$ and $b:=\sum_{i\equiv 1 (\text{mod } 2)} f_i$ and consider the polarisation described in section \ref{symplectic} so that the sequence $(f_0,\dots, f_{m},f_m,\dots,f_0)$ up to nonzero elements, identifies the period domain associated to this set of Hodge numbers, with a flag domain of $Sp(a+b,b+a)$ with $a+b=l$ in the flag manifold of $Sp(2l,\C)$. If $i\equiv 0 (\text{mod } 2)$, then the block $f_i$ has negative signature, and if $i\equiv 1 (\text{mod 2})$, then the block $f_i$ has positive signature.   
Let 
\begin{equation}\label{decompozitie3}
 \{1,\dots, a\}=
\begin{cases}
\mathcal{I}_0\cup \mathcal{I}_2\cup\dots \mathcal{I}_m,\, m \text{ even},\\
\mathcal{I}_0\cup \mathcal{I}_2\cup\dots \mathcal{I}_{m-1},\, m \text{ odd},
\end{cases}
\end{equation}
\begin{equation}\label{decompozitie4}
\{a+1,\dots, l\}=
\begin{cases}
\mathcal{I}_1\cup \mathcal{I}_3\cup\dots \mathcal{I}_{m-1},\, m \text{ even},\\
\mathcal{I}_1\cup \mathcal{I}_3\cup\dots \mathcal{I}_m,\, m \text{ odd},
\end{cases}
\end{equation}
be an order-preserving partition of the sets $\{1,\dots, a\}$ and $\{a+1,\dots, l\}$ such that the cardinality of each $\mathcal{I}_j$ equals $f_j$, for all $1\le j\le m$. Similarly, let  
\begin{equation}\label{decompo7}
 \{l+1,\dots,l+ a\}=
\begin{cases}
\mathcal{I}_{2m+1}\cup \mathcal{I}_{2m-1}\cup\dots \mathcal{I}_{m+1},\, m \text{ even},\\
\mathcal{I}_{2m+1}\cup \mathcal{I}_{2m-1}\cup\dots \mathcal{I}_{m+2},\, m \text{ odd},
\end{cases}
\end{equation}
\begin{equation}\label{decompo8}
\{l+a+1,\dots, 2l\}=
\begin{cases}
\mathcal{I}_{2m}\cup \mathcal{I}_{2m-2}\cup\dots \mathcal{I}_{m+2},\, m \text{ even},\\
\mathcal{I}_{2m}\cup \mathcal{I}_{2m-2}\cup\dots \mathcal{I}_{m+1},\, m \text{ odd},
\end{cases}
\end{equation}
be an order-preserving partition of the sets $\{l+1,\dots, l+a\}$ and $\{l+a+1,\dots, 2l\}$ such that the cardinality of each $\mathcal{I}_j$ equals $f_{n-j}$, for all $m+1\le j\le 2m+1$.
\newline
\noindent
\newline
Associated to the partitions (\ref{decompozitie3}), (\ref{decompozitie4}), (\ref{decompo7}), (\ref{decompo8}) consider the equivalent partition of the standard ordered basis in $\C^{2l}$ and choose the corresponding Hodge filtration $z_0\in D$ (equivalently Hodge decomposition $V_0$) as a base point in $D$. That is, $V^{n-k,k}$ is spanned by elements of the standard ordered basis indexed by $\mathcal{I}_k$, for all $0\le k\le n$.
\newline
\noindent
\newline
Each element $g\in \g$ acts on $V_\C$ via a linear transformation and after fixing the standard basis in $\C^{2l}$, the complex conjugation and polarisation as in section \ref{liestructure}, $g$ has a decomposition as a $2\times2$-block matrix 

$$g=\{\begin{pmatrix}
A & B \\
C & A^{t}
\end{pmatrix}: \, B-B^{t}=0, C- C^{t}=0, \,A,B,C\in\glc\}.$$
\noindent
The first purpose of this section is to further analyse the action of $\g$ on $V_\C$ via endomorphisms and further decompose the blocks $A$ (and implicitly $A^t$), $B$ and $C$ of $g\in \g$, by looking at how $g$ acts on each subspace of the Hodge decomposition $V_0$. For this recall that an element $g$ in $\g$ acting on an element $e_j$ of the standard basis gives as a result the $j^{th}$ column of $g$, for all $j$. 
\newline
\noindent
\newline
Let $\mathcal{J}_1:=\{0,2,\dots,m,1,3,\dots,m-1\}$ for $m$ even and $\mathcal{J}_1:=\{0,2,\dots,m-1,1,3,\dots,m\}$ for $m$ odd respectively, denote the ordered set obtained by first considering the ordered even integers and then the ordered odd integers of the set $\{0,1,2,\dots,m\}$. Similarly, let $\mathcal{J}_2:=\{2m+1,2m-1,\dots,m+1,2m,2m-2\dots,m+2\}$ for $m$ even and $\mathcal{J}_2:=\{2m+1,2m-1,\dots,m+2,2m,2m-2\dots,m+1\}$ for $m$ odd respectively, denote the ordered set obtained by first considering the ordered odd integers and then the ordered even integers of the set $\{2m+1,2m,\dots,m+1\}$. 
\newline
Write $A$, $B$, $C$ and $(A^t)$ respectively, as $(m+1)\times (m+1)$-block matrices, with rows and columns indexed by the ordered sets $\mathcal{J}_1\times \mathcal{J}_1$, $\mathcal{J}_1\times \mathcal{J}_2$, $\mathcal{J}_2\times \mathcal{J}_1$ and $\mathcal{J}_2\times \mathcal{J}_2$ respectively. 
\newline
The blocks $A_{ij}$, $B_{ij}$, $C_{ij}$ and $(-A_{ij}^t)$ respectively are of type $f_i\times f_j$, $f_i\times f_{n-j}$, $f_{n-i}\times f_{j}$ and $f_{n-i}\times f_{n-j}$ respectively, for all $i,j$. 
\newline
By symmetry $(A_{ij})^t$ is identified with $A_{n-j,n-i}^t$, $C_{ij}$ is identified with $C_{n-j,n-i}$ and $B_{ij}$ with $B_{n-j,n-i}$. For convenience of writing, denote the pair $(A,A^t)$ simply by $A$ and the pairs $(A_{ij},A_{n-j,n-i}^t)$, $(C_{ij},C_{n-j,n-i})$ and $(B_{ij},B_{n-j,n-i})$ respectively by $A_{ij}$, $C_{ij}$, and $B_{ij}$ respectively. One can think of each of  this blocks as elements of $\g$ by requiring any other entry in $\g$ outside the block to be zero. Furthermore, one can define the direct sum of two blocks, say for example $A_{ij}\oplus C_{fl}$, to be the element in $\g$ with non-zero entries only inside the blocks $A_{ij}$ and $C_{fl}$.
\newline
\noindent
\newline
We now look at how each such block (viewed as an element in $\g$) acts on the memebrs of the Hodge decomposition. More precisely we have the following three cases:
\begin{itemize}
\item {The rows and columns of $A_{ij}$, $B_{ij}$, $C_{ij}$ and $(A^t_{ij})$ respectively are indexed by $\mathcal{I}_i\times \mathcal{I}_j$ with the pair $(i,j)$ in the indexed set $\mathcal{J}_1\times \mathcal{J}_1$,$\mathcal{J}_1\times \mathcal{J}_2$, $\mathcal{J}_2\times \mathcal{J}_1$ and $\mathcal{J}_2\times \mathcal{J}_2$, respectively and this are precisely the blocks which map $V^{n-j,j}$ to $V^{n-i,i}$ for all $i,j$.}
\end{itemize}
Next we use the above decomposition to give a block decomposition of $\mathfrak{g}^{-p.p}$ for each $p\in \{-n,\dots,0,\dots,n\}$. By definition $\g^{0,0}$ is the set of homomorphism in $\g$ which preserve each member of the Hodge decomposition, i.e. the Levi complement of the parabolic subalgebra $\p$. By our discussion this corresponds to $$\g^{0,0}=\bigoplus_{i\in \mathcal{J}_1}A_{ii},$$ i.e. to the diagonal blocks of $A$ together.
\newline
\noindent
\newline
Let $\h_{ii}$ denote the diagonal matrices inside the block $A_{ii}$, for all $0\le i \le m$. Then $$\h=\bigoplus_{i=0}^{m}\h_{ii},$$
and as a consequence $\h\subset \g^{0,0}$. 

\begin{prop}\label{symplectic}
The $\g^{-p,p}$ piece can be decomposed as a direct sum of blocks, each indexed by pairs $(i,j)$ such that $i-j$=p, as follows:
\begin{itemize}
\item{
If $p>0$, then the subspace $\g^{-p,p}$, is the direct sum of $n-p+1$ blocks corresponding to the endomorphisms $V^{n-j,j}\mapsto V^{n-j-p,j+p}$, for all $0\le j \le n-p$. If $j\le m$ and $j+p \le m$, then the blocks corresponding to this endomorphisms are $A_{j+p,j}$ and if $j \ge m$ and $j+p \ge m$, then the blocks corresponding to these endomorphisms are $(A^t_{j+p,j})$. If $j\le m,j+p \ge m$, then the blocks corresponding to this endomorphisms are $C_{j+p,j}$. Thus, in one formula we have:
$$\g^{-p,p}=\bigoplus_{0\le j<j+p\le m}A_{j+p,j}\bigoplus_{0\le j\le m, j+p> m}C_{j+p,j}  \bigoplus_{m<j<j+p \le n}(A_{j+p,j}^t).$$}

\item{
If $p<0$, then the subspace $\g^{-p,p}$, is the direct sum of $n+p+1$ blocks corresponding to the endomorphisms $V^{n-j+p,j-p}\mapsto V^{n-j,j}$, for all $0\le j \le n+p$. Thus, in one formula, we have:
$$\g^{-p,p}=\bigoplus_{0\le j<j-p\le m}A_{j,j-p}\bigoplus_{0\le j\le m, j-p> m}B_{j,j-p}  \bigoplus_{ m<j<j-p \le n}(A_{j,j-p}^t).$$}
\end{itemize}
\end{prop}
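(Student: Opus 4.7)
The plan is to perform a direct case analysis based on the block decomposition of elements $g \in \g$ developed in the paragraphs immediately preceding the proposition. The starting point is the definition: $\g^{-p,p}$ consists of those $g \in \g$ whose only nonzero components in the Hodge decomposition are the maps $V^{n-j,j} \to V^{n-j-p,j+p}$ for $j$ in the appropriate range. Since each $V^{n-j,j}$ is spanned by basis vectors indexed by $\mathcal{I}_j$, the component realizing $V^{n-j,j} \to V^{n-j-p,j+p}$ is an $|\mathcal{I}_{j+p}| \times |\mathcal{I}_j|$ submatrix whose location inside $g$ is determined by whether $\mathcal{I}_j$ and $\mathcal{I}_{j+p}$ sit in the ``upper'' basis block $\{e_1,\ldots,e_l\}$ or the ``lower'' basis block $\{e_{l+1},\ldots,e_{2l}\}$.

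I would then split the case $p > 0$ into three subcases, tracking the three possible locations of the pair $(j+p, j)$. If $0 \leq j < j+p \leq m$, both $\mathcal{I}_j$ and $\mathcal{I}_{j+p}$ lie in the upper half, so the pair lies in $\mathcal{J}_1 \times \mathcal{J}_1$ and the corresponding submatrix is the block $A_{j+p,j}$. If $m < j < j+p \leq n$, both index sets lie in the lower half, so the pair lies in $\mathcal{J}_2 \times \mathcal{J}_2$ and the submatrix is the block $A^t_{j+p,j}$. Finally, if $j \leq m < j+p$, the pair straddles the middle and sits in $\mathcal{J}_2 \times \mathcal{J}_1$, so the submatrix is the block $C_{j+p,j}$. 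Summing over $j \in \{0,1,\ldots,n-p\}$ then produces exactly the three direct summands appearing in the claimed formula. The symplectic constraints $B = B^t$ and $C = C^t$, together with the relation between $A$ and $A^t$, are already absorbed into the notational convention which identifies $X_{ij}$ with its partner $X_{n-j,n-i}$, so no further verification is required once one checks that each pair $(i,j)$ with $i-j=p$ is hit exactly once by the three summands.

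The case $p < 0$ is entirely analogous: the endomorphisms now send $V^{n-j+p,j-p} \to V^{n-j,j}$ for $0 \leq j \leq n+p$, and the same trichotomy applies, with $C$ replaced by $B$ in the straddling case since the direction of the map between the upper and lower halves is reversed. The main (and only) obstacle is bookkeeping, namely ensuring that the three index ranges in the formula form a partition of all pairs $(i,j)$ with $i - j = p$ and $0 \leq i, j \leq n$, and confirming that the symplectic symmetry convention correctly collapses each mirrored pair of blocks into a single summand. Note that because $n = 2m+1$ is odd, there is no middle Hodge piece $V^{m,m}$ to complicate matters, so the symplectic case is genuinely simpler than the orthogonal even-dimensional situation handled in Proposition \ref{orthogonaleven}.
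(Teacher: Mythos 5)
Your proposal is correct and follows essentially the same route as the paper: the paper gives no separate proof, since the proposition is exactly the bookkeeping consequence of the preceding block analysis (the base point with $V^{n-k,k}$ spanned by the basis vectors indexed by $\mathcal{I}_k$, and the observation that the $(i,j)$-block of $A$, $B$, $C$, $A^t$ is the component mapping $V^{n-j,j}$ to $V^{n-i,i}$), which is precisely the trichotomy you carry out, including the collapse of mirrored blocks via the symmetry convention and the absence of a middle piece since $n=2m+1$ is odd.
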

\noindent
\textbf{Example}. Let $l=7$, the group $G=Sp(14,\C)$, the weight $n=5$ and thus $m=2$. It thus follows that the Hodge decomposition contains $6$ members, namely $$V_\C=V^{5,0}\oplus V^{4,1}\oplus V^{3,2}\oplus V^{2,3}\oplus V^{1,4}\oplus V^{0,5}$$ and the signature sequence is given by $$(i^{5-0},i^{4-1},i^{3-2},i^{2-3},i^{1-4},i^{0-5}),$$ which after computing the powers is $-+-+-+$.
\noindent
Consider the following set of Hodge numbers: $h^{5,0}=2$, $h^{4,1}=3$, $h^{3,2}=2$. It thus follows that $G_0=Sp(4+3,4+3)$ and the period domain is the $G_0$-open orbit parametrized by the signature sequence $-+-+-+$ in the flag manifold $G/P$ parametrized by the dimension sequence $(2,3,2,2,3,2)$. The base point in $D$ is the Hodge filtration associated to the following Hodge decomposition: $V^{5,0}=<e_1,e_2> $, $V^{4,1}=<e_5,e_6,e_7>$, $V^{3,2}=<e_3,e_4>$, $V^{2,3}=<e_{10},e_{11}>$,$V^{1,4}=<e_{12},e_{13},e_{14}>$, $V^{0,5}=<e_{8},e_{9}>$.
Since $m=2$, it follows that $\mathcal{J}_1=\{0,2,1\}$, $\mathcal{J}_2=\{5,3,4\}$ and the blocks $A$, $B$, $C$ and $(A^t)$ of any element $g\in \mathfrak{sp}(14,\C)$ are further divided into $3\times 3$ block matrices.   

$$g=\begin{pmatrix}
A_{00} & A_{02} & A_{01} & B_{05} & B_{03} & B_{04} \\
A_{20} & A_{22} & A_{21} & B_{25} & B_{23} & B_{24}  \\
A_{10} & A_{12} & A_{11} & B_{15} & B_{13} & B_{14}  \\
C_{50} & C_{52} & C_{51} & A_{55}^t & A^t_{53} & A^t_{54} \\
C_{30} & C_{32} & C_{31} & A_{35}^t & A^t_{33} & A^t_{34}  \\
C_{40} & C_{42} & C_{41} & A_{45}^t & A^t_{43} & A^t_{44}  
\end{pmatrix}.$$
The $\k\oplus\mathfrak{m}$ decomposition and the decomposition of each $\g^{-p,p}$ into blocks is illustrated in the following equivalent matrix picture. The subalgebra $\k$ is isomorphic to the (anti-)diagonally embedded $\mathfrak{sl}(7,\C)$ pictured in red. One observation is that choosing the standard basis in $\C^{2l}$ and the particular choice of indexing the blocks characterising the $\g^{-p,p}$-pieces gives a particularly nice matrix representation of the $\k\oplus\mathfrak{g}$ decomposition. Furthermore, the $\g^{-p,p}$-pieces can be visualised as diagonals in this matrix representations as the following picture shows. 
$$g=\begin{pmatrix}
\color{red}{\mathfrak{g}^{0,0}} & \color{red}{\mathfrak{g}^{2,-2}} & \mathfrak{g}^{1,-1} & {\mathfrak{g}^{5,-5}} & {\mathfrak{g}^{3,-3}} & \color{red}{\mathfrak{g}^{4,-4}} \\
\color{red}{\mathfrak{g}^{-2,2}} & \color{red}{\mathfrak{g}^{0,0}} & \mathfrak{g}^{-1,1} & {\mathfrak{g}^{3,-3}} & {\mathfrak{g}^{1,-1}} & \color{red}{\mathfrak{g}^{2,-2}} \\
\mathfrak{g}^{-1,1} & \mathfrak{g}^{1,-1} & \color{red}{\mathfrak{g}^{0,0}} & \color{red}{\mathfrak{g}^{4,-4}} & \color{red}{\mathfrak{g}^{2,-2}} & \mathfrak{g}^{-3,3} \\
{\mathfrak{g}^{-5,5}} & {\mathfrak{g}^{-3,3}} & \color{red}{\mathfrak{g}^{-4,4}}  & \color{red}{\mathfrak{g}^{0,0}} & \color{red}{\mathfrak{g}^{-2,2}} & \mathfrak{g}^{-1,1} \\
{\mathfrak{g}^{-3,3}} & {\mathfrak{g}^{-1,1}} & \color{red}{\mathfrak{g}^{-2,2}}  & \color{red}{\mathfrak{g}^{-2,2}} & \color{red}{\mathfrak{g}^{0,0}} & \mathfrak{g}^{1,-1} \\
 \color{red}{\mathfrak{g}^{-4,4}} & \color{red}{\mathfrak{g}^{-2,2}} & \mathfrak{g}^{-3,3} & \mathfrak{g}^{1,-1} & \mathfrak{g}^{-1,1} & \color{red}{\mathfrak{g}^{0,0}}  
\end{pmatrix}.$$
In general, $K=Sl(l,\C)$. Let $V_\C^{-}=\oplus_{p\equiv 0 (\text{mod } 2)} V^{n-p,p}$ and $V_\C^{+}=\oplus_{p\equiv 1 (\text{mod } 2)} V^{n-p,p}$. Then the minimal dimensional $K$-orbit in $Z$ is the set of isotropic flags in $Z$ which, roughly speaking, have optimal intersection with $V_{\C}^+$ and $V_{\C}^-$, i.e. flags $\mathcal{F}$ such that $\dim F^{n-p}\cap V_{\C}^+=  \oplus_{i\equiv 0 (\text{mod } 2)} f_i $ and $\dim F^{n-p}\cap V_{\C}^-=  \oplus_{i\equiv 1 (\text{mod } 2)} f_i $, for all $0\le p \le m$.
\newline
\noindent
\newline
\textbf{Combinatorics of blocks in the case of $SO(2l+1,\mathbb{C})$.}
\newline
\noindent
\newline
Let $V$ be a finite dimensional $\R-$ (or $\Q$)-vector space of dimension $2l+1$,  and consider a Hodge decomposition on $V_\C$ with $f_i:=h^{n-i,i}$, for all $0\le i \le m$, for a weight $n=2m$. By a choice of bases identify $V_\C$ with $\C^{2l+1}$ and fix $\{e_1,\dots,e_{2l+1}\}$ the ordered standard basis. 
\newline
\noindent
\newline
Let $a:=\sum_{i\equiv 0 (\text{mod } 2)} f_i$ and $b:=\sum_{i\equiv 1 (\text{mod } 2)} f_i$ and consider the polarisation described in section \ref{liestructure} so that the sequence $(f_0,\dots, f_{m-1},f_m,f_{m-1},\dots,f_0)$ up to nonzero elements, identifies the period domain associated to this set of Hodge numbers, with a flag domain of $SO(2a-f_m,2b)$ or $SO(2a, 2b-f_m)$, depending wether $m$ is even or odd, in the flag manifold of $SO(2l+1,\C)$. If $i\equiv 0 (\text{mod } 2)$, then the block $f_i$ has positive signature, and if $i\equiv 1 (\text{mod 2})$, then the block $f_i$ has negative signature. Let  
\begin{equation}\label{decompo9}
 \{1,\dots, a\}=
\begin{cases}
\mathcal{I}_0\cup \mathcal{I}_2\cup\dots \mathcal{I}_m^1,\, m \text{ even},\\
\mathcal{I}_0\cup \mathcal{I}_2\cup\dots \mathcal{I}_{m-1},\, m \text{ odd},
\end{cases}
\end{equation}
\begin{equation}\label{decompo10}
\{a+1,\dots, l\}=
\begin{cases}
\mathcal{I}_1\cup \mathcal{I}_3\cup\dots \mathcal{I}_{m-1},\, m \text{ even},\\
\mathcal{I}_1\cup \mathcal{I}_3\cup\dots \mathcal{I}_m^1,\, m \text{ odd},
\end{cases}
\end{equation}
be an order-preserving partition of the sets $\{1,\dots, a\}$ and $\{a+1,\dots, l\}$ such that the cardinality of each $\mathcal{I}_j$ equals $f_j$, for all $1\le j\le m-1$ and $|\mathcal{I}_m^1|=[f_m/2]$. Similarly, let  
\begin{equation}\label{decompo11}
 \{l+2,\dots,l+ a+1\}=
\begin{cases}
\mathcal{I}_{2m}\cup \mathcal{I}_{2m-2}\cup\dots \mathcal{I}_m^2,\, m \text{ even},\\
\mathcal{I}_{2m}\cup \mathcal{I}_{2m-2}\cup\dots \mathcal{I}_{m+1},\, m \text{ odd},
\end{cases}
\end{equation}
\begin{equation}\label{decompo12}
\{l+a+2,\dots, 2l+1\}=
\begin{cases}
\mathcal{I}_{2m-1}\cup \mathcal{I}_{2m-3}\cup\dots \mathcal{I}_{m+1},\, m \text{ even},\\
\mathcal{I}_{2m-1}\cup \mathcal{I}_{2m-3}\cup\dots \mathcal{I}_m^2,\, m \text{ odd},
\end{cases}
\end{equation}
be an order-preserving partition of the sets $\{l+1,\dots, l+a+1\}$ and $\{l+a+2,\dots, 2l+1\}$ such that the cardinality of each $\mathcal{I}_j$ equals $f_{n-j}$, for all $m+1\le j\le 2m$ and $|\mathcal{I}_m^2|=[f_m/2]$. 
\newline
\noindent
\newline
Associated to the partitions (\ref{decompo9}), (\ref{decompo10}), (\ref{decompo11}), (\ref{decompo12}), consider the equivalent partition of the standard ordered basis in $\C^{2l}$ and choose the corresponding Hodge filtration $z_0\in D$ (equivalently Hodge decomposition $V_0$) as a base point in $D$. That is, $V^{n-k,k}$ is spanned by elements of the standard ordered basis indexed by $\mathcal{I}_k$, for all $k\ne m$. The middle member of the base Hodge decomposition, $V^{m,m}=\overline{V}^{m,m}$, is spanned by elements of the standard ordered basis indexed by $\mathcal{I}_m^1\cup{\mathcal{I}}_m^2\cup\mathcal{I}_m^3$, where $\mathcal{I}_{3}^m$ is the one element set containing $l+1$. 
\newline
\noindent
\newline
Each element $g\in \g$ acts on $V_\C$ via a linear transformation and after fixing the standard basis in $\C^{2l+1}$, the complex conjugation and polarisation as in section \ref{liestructure}, $g$ has a decomposition as a $3\times3$-block matrix 
$$g=\{\begin{pmatrix}
A & x & B \\
y^t & 0 & -x^t \\
C & -y  & - A^{t}
\end{pmatrix}: \, B+B^{t}=0, C+ C^{t}=0, \,A,B,C\in \glc\, x, y\in \C^l\}.$$
\noindent
The first purpose of this section is to further analyse the action of $\g$ on $V_\C$ via endomorphisms and further decompose the blocks $A$, $B$, $C$ , $(-A^t)$, $(x,-x^t)$ and $(y,-y^t)$ of $g\in \g$, by looking at how $g$ acts on each subspace of the Hodge decomposition $V_0$. For this recall that an element $g$ in $\g$ acting on an element $e_j$ of the standard basis gives as a result the $j^{th}$ column of $g$, for all $j$. 
\newline
\noindent
\newline
Let $\mathcal{J}_1:=\{0,2,\dots,m,1,3,\dots,m-1\}$ for $m$ even and $\mathcal{J}_1:=\{0,2,\dots,m-1,1,3,\dots,m\}$ for $m$ odd respectively, denote the ordered set obtained by first considering the ordered even integers and then the ordered odd integers of the set $\{0,1,2,\dots,m\}$. Similarly, let $\mathcal{J}_2:=\{2m,\dots,m,2m-1,\dots,m+1\}$ for $m$ even and $\mathcal{J}_2:=\{2m,\dots,m+1,2m-1,\dots,m\}$ for $m$ odd respectively, denote the ordered set obtained by first considering the ordered even integers and then the ordered odd integers of the set $\{2m,2m-1,\dots,m\}$. 
\newline
Write $A$, $B$, $C$ and $(-A^t)$ respectively, as $(m+1)\times (m+1)$-block matrices, with rows and columns indexed by the ordered sets $\mathcal{J}_1\times \mathcal{J}_1$, $\mathcal{J}_1\times \mathcal{J}_2$, $\mathcal{J}_2\times \mathcal{J}_1$ and $\mathcal{J}_2\times \mathcal{J}_2$ respectively. 
\newline
The blocks $A_{ij}$, $B_{ij}$, $C_{ij}$ and $(-A_{ij}^t)$ respectively are of type $f_i\times f_j$, $f_i\times f_{2m-j}$, $f_{2m-i}\times f_{j}$ and $f_{2m-i}\times f_{2m-j}$ respectively, for all $i,j$ such that $i\ne m$. For $i=m$, these blocks are of type $f_m/2\times f_j$, $f_m/2\times f_{2m-j}$, $f_m/2\times f_j$, and $f_m/2\times f_{2m-j}$ respectively.
\newline
By symmetry $A_{ij}^t$ is identified with $-A_{n-j,n-i}^t$, $C_{ij}$ is identified with $C_{n-j,n-i}$ and $B_{ij}$ with $B_{n-j,n-i}$. For convenience of writing, denote the pair $(A,-A^t)$ simply by $A$ and the pairs $(A_{ij},-A_{n-j,n-i}^t)$, $(C_{ij},C_{n-j,n-i})$ and $(B_{ij},B_{n-j,n-i})$ respectively by $A_{ij}$, $C_{ij}$, and $B_{ij}$ respectively. One can think of each of  this blocks as elements of $\g$ by requiring any other entry in $\g$ outside the block to be zero. Furthermore, one can define the direct sum of two blocks, say for example $A_{ij}\oplus C_{fl}$, to be the element in $\g$ with non-zero entries only inside the blocks $A_{ij}$ and $C_{fl}$.
\newline
\noindent
\newline
Moreover, write $x$, $(-x^t)$, $y$ and $(-y^t)$ respectively as $(m+1)\times 1$, $1\times (m+1)$, $1\times (m+1)$ and $(m+1)\times 1$ blocks respectively, indexed by $\mathcal{J}_1\times \{2\}$, $\{2\}\times \mathcal{J}_2$, $\{2\}\times \mathcal{J}_1$ and $\mathcal{J}_2\times \{2\}$ respectively. 
\newline
\noindent
\newline
We now look at how each such block (viewed as an element in $\g$) acts on the members of the Hodge decomposition. More precisely we have the following three cases: 
\begin{enumerate}
\item {The rows and columns of $A_{ij}$, $B_{ij}$, $C_{ij}$ and $(-A^t_{ij})$ respectively are indexed by $\mathcal{I}_i\times \mathcal{I}_j$ with the pair $(i,j)$ in the indexed set $\mathcal{J}_1\times \mathcal{J}_1$,$\mathcal{J}_1\times \mathcal{J}_2$, $\mathcal{J}_2\times \mathcal{J}_1$ and $\mathcal{J}_2\times \mathcal{J}_2$, respectively and this are precisely the blocks which map $V^{n-j,j}$ to $V^{n-i,i}$ for all $i,j$ not equal to $m$.}
\item{For $i=m$ or $j=m$, the blocks $A_{ij}$, $B_{ij}$, $C_{ij}$ and $(-A^t_{ij})$ respectively are one of the three blocks which map $V^{n-j,j}$ to $V^{n-i,i}$.}
\item{The block $x_{i,m}$ corresponds to the endomorphism which sends $e_{l+1}$ to $V^{n-i,i}$ for all $i \in \mathcal{J}_1$ and the block $y_{m,i}$ corresponds to the endomorphism which sends $V^{n-i,i}$ to $e_{l+1}$ for all $i\in \mathcal{J}_1$. Similarly, 
the block $(-x_{m,i}^t)$ corresponds to the endomorphism which sends $V^{n-i,i}$ to $e_{l+1}$ for all $i \in \mathcal{J}_2$ and the block $(-y_{i,m})^t$ corresponds to the endomorphism which sends $e_{l+1}$ to $V^{n-i,i}$ for all $i\in \mathcal{J}_2$.}
\end{enumerate}
\noindent
Next we use the above decomposition to give a block decomposition of $\mathfrak{g}^{-p.p}$ for each $p\in \{-n,\dots,0,\dots,n\}$. By definition $\g^{0,0}$ is the set of homomorphism in $\g$ which preserve each member of the Hodge decomposition, i.e. the Levi complement of the parabolic subalgebra $\p$. By our discussion this corresponds to $$\g^{0,0}=\bigoplus_{i\in \mathcal{J}_1}A_{i,i}\bigoplus C_{m,m}\bigoplus B_{m,m}\bigoplus y_{m,m}\bigoplus x_{m,m},$$ i.e. the diagonal blocks of $A$ together with the middle block of the diagonal in $C$ and $B$.
\newline
\noindent
\newline
Let $\h_{ii}$ denote the diagonal matrices inside the block $A_{ii}$, for all $0\le i \le m$. Then $$\h=\bigoplus_{i=0}^{m}\h_{ii},$$
and as a consequence $\h\subset \g^{0,0}$. 
\begin{prop}\label{orthogonalodd}
The $\g^{-p,p}$ piece can be decomposed as a direct sum of blocks, each indexed by pairs $(i,j)$ such that $i-j=p$, as follows:
\begin{itemize}
\item{
If $p>0$, then the subspace $\g^{-p,p}$, is the direct sum of $n-p+5$ blocks or $n-p+1$ blocks respectively, corresponding to the endomorphisms $V^{n-j,j}\mapsto V^{n-j-p,j+p}$, for all $0\le j \le n-p$, their total number depending respectively on wether the endomorphisms involve the $V^{m,m}$ member or not. If $j<m$ and $j+p<m$, then the blocks corresponding to these endomorphisms are $A_{j+p,j}$ and if $j>m$ and $j+p>m$, then the blocks corresponding to these endomorphisms are $(-A_{j+p,j}^t)$. If $j<m$ and $j+p>m$, then the blocks corresponding to these endomorphisms are $C_{j+p,j}$. If $j<m$ and $j+p=m$, then the endomorphism is given by the blocks $A_{m,j}\oplus C_{m,j}\oplus y_{m,j}$ and if $j=m$ and $j+p>m$, then the endomorphism is given by the blocks $(-A_{j+p,m}^t)\oplus C_{j+p,m} \oplus {(-y^t_{j+p,m})}$.Thus, in one formula, ommiting the symmetric blocks, we have:
$$\g^{-p,p}=
\bigoplus_{0\le j <j+p\le m}A_{j+p,j}\bigoplus_{0\le j\le m, j+p\ge m}C_{j+p,j} 
 \bigoplus_{m \le j <j+p \le n}(-A_{j+p,j}^t)$$$$\bigoplus_{j+p=m} {y}_{j+p,j}\bigoplus_{j=m}{y}^t_{j+p,j}.$$
 }

\item{
If $p<0$, then the subspace $\g^{-p,p}$, is the direct sum of $n+p+5$ blocks or $n+p+1$ blocks corresponding to the endomorphisms $V^{n-j+p,j-p}\mapsto V^{n-j,j}$, for all $0\le j \le n+p$, their total number depending respectively on wether the endomorphisms involve the $V^{m,m}$ member or not. 
Thus, in one formula, we have:
$$\g^{-p,p}=\bigoplus_{0\le j<j-p\le m}A_{j,j-p}\bigoplus_{0\le j\le m, j-p\ge m}B_{j,j-p}  \bigoplus_{m\le j<j-p \ge n}(-A_{j,j-p}^t)$$$$\bigoplus_{j-p=m} {x}_{j,j-p}\bigoplus_{j=m}{x}^t_{j,j-p}.$$}
\end{itemize}
\end{prop}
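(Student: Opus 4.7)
The plan is to proceed directly from the definition of $\g^{-p,p}$ as the set of endomorphisms $g\in\g$ sending $V^{n-j,j}$ to $V^{n-(j+p),j+p}$ for every $j$, and to read off the contributing blocks from the detailed block decomposition of a generic $g\in\socc$ already established in this subsection. Since that decomposition expresses $g$ as a disjoint union of blocks $A_{ij}$, $B_{ij}$, $C_{ij}$, $(-A^t_{ij})$, $x_{i,m}$, $y_{m,i}$, $(-x^t_{m,i})$, $(-y^t_{i,m})$, each of which has been described explicitly as the endomorphism mapping one specified Hodge summand to another, the proof amounts to selecting those blocks whose source--target pair $(i,j)$ satisfies $i-j=p$ and verifying that no other blocks contribute.

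First I would treat the case $p>0$. For a pair $(i,j)$ with $i=j+p$ and $i,j\ne m$, the earlier case analysis identifies a unique block implementing $V^{n-j,j}\to V^{n-i,i}$: it is $A_{ij}$ if both indices are less than $m$, $C_{ij}$ if $j<m<i$, and $(-A^t_{ij})$ if both indices exceed $m$; in each case the symmetry pairings $A^t_{ij}\leftrightarrow -A_{n-j,n-i}^t$, $C_{ij}\leftrightarrow C_{n-j,n-i}$ ensure that the quoted block is the full contribution. When one of the indices equals $m$, the summand $V^{m,m}$ is split in our combinatorial partition as $\mathcal{I}_m^1\cup\mathcal{I}_m^2\cup\mathcal{I}_m^3$, with $\mathcal{I}_m^3=\{l+1\}$; thus an endomorphism whose source or target is $V^{m,m}$ is realized by \emph{three} blocks of the combinatorial picture, namely one of $A_{m,j}$ (or $-A^t_{i,m}$), one of $C_{m,j}$ (or $C_{i,m}$), and one of $y_{m,j}$ (or $-y^t_{i,m}$), corresponding respectively to the target landing in $\mathcal{I}_m^1$, $\mathcal{I}_m^2$ or $\mathcal{I}_m^3$. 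Collecting these pieces yields exactly the direct sum written in the statement, and the count $n-p+5$ versus $n-p+1$ is simply $n-p+1$ generic pairs augmented by the four extra blocks that appear when the index $m$ occurs in the allowed range. The case $p<0$ is entirely symmetric under transposition, exchanging the roles of $B$ with $C$ and of $x$ with $y$.

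The only nontrivial check is consistency: one must verify that the listed blocks are \emph{independent} inside $\g$ (so that the sum is direct) and that they \emph{exhaust} $\g^{-p,p}$. Independence is clear because distinct blocks occupy disjoint positions in the matrix realization. Exhaustion follows from the observation, already recorded, that the blocks together with their symmetric partners span all of $\g$; since each block has a well-defined source-target pair $(i,j)$ and the Hodge grading depends only on $i-j$, grading the global decomposition by $p=i-j$ produces the claimed decomposition of $\g^{-p,p}$.

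The main obstacle is bookkeeping around the middle piece $V^{m,m}$: one has to be careful that the odd dimension forces the extra basis vector $e_{l+1}$ to appear as its own combinatorial block $\mathcal{I}_m^3$, giving rise to the additional $x$ and $y$ contributions that have no counterpart in Propositions \ref{orthogonaleven} and \ref{symplectic}. Once one fixes, at the outset, the correspondence between $\mathcal{I}_m^1,\mathcal{I}_m^2,\mathcal{I}_m^3$ and the blocks $A_{m,\cdot},C_{m,\cdot},y_{m,\cdot}$ (respectively $-A^t_{\cdot,m},C_{\cdot,m},-y^t_{\cdot,m}$), the verification reduces to the routine case split above.
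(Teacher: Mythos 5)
Your proposal is correct and follows essentially the same route as the paper: the paper states this proposition as a direct bookkeeping consequence of the preceding combinatorial analysis (the block decomposition of $g\in\socc$, the three cases describing how each block, including the $x$- and $y$-pieces attached to $e_{l+1}$, maps one Hodge summand to another), which is precisely the case split and grading-by-$i-j$ argument you write out, including the correct accounting of the two extra blocks at each occurrence of the middle member that turns $n-p+1$ into $n-p+5$.
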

\noindent

\noindent
\textbf{Example}. Let $l=5$, the group $G=SO(11,\C)$, the weight $n=4$ and thus $m=2$. It thus follows that the Hodge decomposition contains $5$ members, namely $$V_\C=V^{4,0}\oplus V^{3,1}\oplus V^{2,2}\oplus V^{1,3}\oplus V^{0,4}$$
and the signature sequence is given by $(i^{4-0},i^{3-1},i^{2-2},i^{1-3},i^{0-4}),$ which after computing the powers is $+-+-+$.
\noindent
Consider the following set of Hodge numbers: $h^{4,0}=2$, $h^{3,1}=2$, $h^{2,2}=3$. It thus follows that $G_0=SO(7,4)$ and the period domain is the $G_0$-open orbit parametrized by the signature sequence $+-+-+$ in the flag manifold $G/P$ parametrized by the dimension sequence $(2,2,3,2,2)$. The base point in $D$ is the Hodge filtration associated to the following Hodge decomposition: $V^{4,0}=<e_1,e_2> $, $V^{3,1}=<e_4,e_5>$, $V^{2,2}=<e_3,e_9,e_6>$, $V^{1,3}=<e_{10},e_{11}>$, $V^{0,4}=<e_7,e_8>$. Since $m=2$, it follows that $\mathcal{J}_1=\{0,2,1\}$, $\mathcal{J}_2=\{4,2,3\}$ and the blocks $A$, $B$, $C$ and $(-A^t)$ of any element $g\in \mathfrak{so}(11,\C)$ are further divided into $3\times 3$, $3\times 1$ and $1\times 3$ block matrices, respectively.   

$$g=\begin{pmatrix}
A_{00} & A_{02} & A_{01} & x_{02} & B_{04} & B_{02} & B_{03} \\
A_{20} & A_{22} & A_{21} & x_{22} & B_{24} & B_{22} & B_{23}  \\
A_{10} & A_{12} & A_{11} & x_{12} & B_{14} & B_{12} & B_{13}  \\
y_{20}^t & y_{22}^t & y_{21}^t & 0 & -x_{24}^{t} & -x_{22}^t & -x_{23}^t  \\
C_{40} & C_{42} & C_{41} & -y_{42} & -A^t_{44} & -A^t_{42} & -A^t_{43} \\
C_{20} & C_{22} & C_{21} & -y_{22} & -A^t_{24} & -A^t_{22} & -A^t_{23} \\
C_{30} & C_{32} & C_{31} & -y_{32} & -A^t_{34} & -A^t_{32} & -A^t_{33} 
\end{pmatrix}.$$
The $\k\oplus\mathfrak{m}$ decomposition and the decomposition of each $\g^{-p,p}$ into blocks is illustrated in the following equivalent matrix picture. The subalgebra $\k$ is isomorphic to the product $\mathfrak{so}(7,\C)\times \mathfrak{so}(4,\C)$, where the two factors are pictured with red and blue respectively. One observation is that choosing the standard basis in $\C^{2l}$ and the particular choice of indexing the blocks characterising the $\g^{-p,p}$-pieces gives a particularly nice matrix representation of the $\k\oplus \m$ decomposition. Furthermore, the $\g^{-p,p}$-pieces can be visualised as diagonals in this matrix representations as the following picture shows. 
$$g=\begin{pmatrix}
\color{red}{\mathfrak{g}^{0,0}} & \color{red}{\mathfrak{g}^{2,-2}} & \mathfrak{g}^{1,-1} & \color{red}{\mathfrak{g}^{2,-2}} & \color{red}{\mathfrak{g}^{4,-4}}  & \color{red}{\mathfrak{g}^{2,-2}} & \mathfrak{g}^{3,-3} \\
\color{red}{\mathfrak{g}^{-2,2}} & \color{red}{\mathfrak{g}^{0,0}} & \mathfrak{g}^{-1,1}  & \color{red}{\mathfrak{g}^{0,0}} & \color{red}{\mathfrak{g}^{2,-2}} & \color{red}{\mathfrak{g}^{0,0}} & \mathfrak{g}^{1,-1} \\
\mathfrak{g}^{-1,1} & \mathfrak{g}^{1,-1} & \color{blue}{\mathfrak{g}^{0,0}} & \mathfrak{g}^{1,-1} & \mathfrak{g}^{3,-3} & \mathfrak{g}^{1,-1} & \color{blue}{\mathfrak{g}^{2,-2}}  \\
\color{red}{\mathfrak{g}^{-2,2}} & \color{red}{\mathfrak{g}^{0,0}} & \mathfrak{g}^{-1,1} & & \color{red}{\mathfrak{g}^{2,-2}} & \color{red}{\mathfrak{g}^{0,0}} & \mathfrak{g}^{1,-1} \\
\color{red}{\mathfrak{g}^{-4,4}} & \color{red}{\mathfrak{g}^{-2,2}}  & \mathfrak{g}^{-3,3} & \color{red}{\mathfrak{g}^{-2,2}} & \color{red}{\mathfrak{g}^{0,0}} & \color{red}{\mathfrak{g}^{-2,2}}  & \mathfrak{g}^{-1,1} \\
\color{red}{\mathfrak{g}^{-2,2}} & \color{red}{\mathfrak{g}^{0,0}}  & \mathfrak{g}^{-1,1} & \color{red}{\mathfrak{g}^{0,0}} & \color{red}{\mathfrak{g}^{2,-2}} & \color{red}{\mathfrak{g}^{0,0}}  & \mathfrak{g}^{1,-1} \\
\mathfrak{g}^{-3,3} & \mathfrak{g}^{-1,1} & \color{blue}{\mathfrak{g}^{-2,2}} & \mathfrak{g}^{-1,1} & \mathfrak{g}^{1,-1} & \mathfrak{g}^{-1,1} &\color{blue}{\mathfrak{g}^{0,0}} 
\end{pmatrix}.$$
\noindent
In general, if $G_0=SO(2a-f_m,2b)$, then $K=SO(2a-f_m,\C)\times SO(2b,\C)$ and if $G_0=SO(2a,2b-f_m)$, then $K=SO(2a,\C)\times SO(2b-f_m,\C)$. Let $V_\C^{+}=\oplus_{p\equiv 0 (\text{mod } 2)} V^{n-p,p}$ and $V_\C^{-}=\oplus_{p\equiv 1 (\text{mod } 2)} V^{n-p,p}$. Then the minimal dimensional $K$-orbit in $Z$ is the set of isotropic flags in $Z$ which, roughly speaking, have optimal intersection with $V_{\C}^+$ and $V_{\C}^-$, i.e. flags $\mathcal{F}$ such that $\dim F^{n-p}\cap V_{\C}^+=  \oplus_{i\equiv 0 (\text{mod } 2)} f_i $ and $\dim F^{n-p}\cap V_{\C}^-=  \oplus_{i\equiv 1 (\text{mod } 2)} f_i $, for all $0\le p <m$.

\subsection{Main results}\label{results}
\noindent
With the notation from the previous section, the following definitions hold for all the groups. 
\begin{defi}\label{Hodgetriple}
Call any pair $D_{ij}:=(A_{ii},A_{jj})$ for each $0 \le i<j  \le m$, a \textbf{semisimple pair} and any triple $$H_{ij}:=[A_{ij},D_{ij},A_{ji}]$$ a \textbf{ standard Hodge triple}. Equivalently, call any pair $D_{n-i,j}:=(A_{n-i,n-i},A_{jj})$ for each $m \le i  \le n$, $0\le j \le m$, $n-i\ne j$, a \textbf{semisimple pair} and any triple $$H_{ij}^c:=[C_{ij},D_{n-i,j},B_{n-i,n-j}]$$ a \textbf{ standard Hodge triple}
\end{defi}
\begin{defi}
For $m \le i \le n$ and $n-i=j$, call any of the triples $$H_{ij}^c=[C_{ij}, A_{n-i,j}, B_{n-i,n-j}]$$ for the even dimensional case and
$$H_{ij}^c=[(C_{ij},y_{mj}), A_{n-i,j}, (x_{jm}B_{n-i,n-j})]$$ for the odd dimensional case respectively, a \textbf{non-standard Hodge triple} of type $(C)$ or of type $(B)$ respectively.
\end{defi}

\noindent
In detail, a Hodge triple consists of the following data: 
\begin{itemize}
\item{ a semisimple pair $(A_{ii},A_{jj})\subset \g^{0,0}$ for $i < j$ and simply $A_{ii}$ for $i=j$. Implicitly, this corresponds also to a choice $$\h_{ij}:=\h_{ii}\oplus\h_{jj}$$ of subalgebra of the Cartan subalgebra $\h$, with $\h_{ii}$ and $\h_{jj}$ respectively, denoting the diagonal matrices of the blocks $A_{ii}$ and $A_{jj}$, respectively.}
\item{a unipotent pair $(A_{ij}, A_{ji})\in \g^{-p,p}\oplus \g^{p,-p}$, for $i\ne j$ and $p=i-j$ or a pair $(C_{ij},B_{n-i,n-j}) \in \g^{-p,p}\oplus \g^{p,-p}$, for $n-i\ne j$, $p=i-j$.}
\item{a unipotent pair $(C_{ij}, B_{n-i,n-j})\in \g^{-p,p}\oplus \g^{p,-p}$, for $n-i= j$ and $p=i-j$ or $((C_{ij}, y_{mj}),(x_{jm}, B_{n-i,n-j}))$. Observe that in the odd dimensional case when $j=n-i$, in general $C_{ij}$ and $y_{mj}$ do not belong to the same $\g^{-p,p}$ piece.}
\item{a subset of roots $$\theta_{ij}:=\{-\varepsilon_{i_1}+\varepsilon_{i_2}:\,i_1\in \mathcal{I}_{j}, \, i_2\in \mathcal{I}_{i}\}
\,\cup\,\theta_{ji}:=\{-\varepsilon_{i_1}+\varepsilon_{i_2}:\,i_1\in \mathcal{I}_{i}, \, i_2\in \mathcal{I}_{j}\}$$} for the pair $(A_{ij}, A_{ji})$ 
\item{a subset of roots $$\theta_{ij}^c:=\{-\varepsilon_{i_1}-\varepsilon_{i_2}:\,i_1\in \mathcal{I}_{j}, \, i_2\in \mathcal{I}_{n-i}\}
\,\cup\,\theta_{ji}^b:=\{\varepsilon_{i_1}+\varepsilon_{i_2}:\,i_1\in \mathcal{I}_{j}, \, i_2\in \mathcal{I}_{n-i}\}$$} for the pair $(C_{ij}, B_{n-i,n-j})$.
\item{a subset of roots $$\theta_{ij}^c:=\{-\varepsilon_{i_1}-\varepsilon_{i_2}:\,i_1\ne i_2\in \mathcal{I}_{n-i}\}
\,\cup\,\theta_{ji}^b:=\{\varepsilon_{i_2}+\varepsilon_{i_1}:\,i_1\ne i_2\in \mathcal{I}_{n-i}\}$$} 
for the pair $(C_{ij}, B_{n-i,n-j})$ with $n-i=j$ together with the roots $\theta^{d}_{ij}:=\{-\varepsilon_{i_1}:\,i_1\in \mathcal{I}_{n-i}\}$ for the odd weight case or for the odd-dimensional pair 
$((C_{ij},y_{ij}),(B_{n-i,n-j},x_{ji}))$.

\end{itemize}
Implicitly, the above definition identifies the roots and root vectors which describe each of the blocks $A_{ij}$, $B_{ij}$, $C_{ij}$, $y_{ij}$, $x_{ij}$. When we want to make explicit the group in consideration, we use the upper-script $B,C,D$, and talk for example in the case of a symbol denoting a subset of roots, by $\theta_{ij}^{(B,C,D)}$, $\theta_{ij}^{(B,C,D),c}$, $\theta_{ii}^{C,c}$, $\theta_{ii}^{C,b}$, and $\theta_{ii}^{B,d}$.
From the considerations of section \ref{liestructure} the following proposition follows.
\begin{thm}\label{hodgetriple}
A standard Hodge triple $H_{ij}$ or $H_{ij}^c$ respectively, is equivalent with prescribing a Lie algebra homomorphism of $\mathfrak{sl}(h^{n-i,i}+h^{n-j,j}, \C)$ or $\mathfrak{sl}(h^{i,n-i}+h^{n-j,j})$ respectively to $\g$ such that its Cartan subalgebra $\h$ is mapped to $\h_{ij}$ or $\h_{n-i,j}$ respectively, its positive unipotent radical $\mathfrak{u}^+$ to $A_{ij}$ or equivalently to $B_{ji}$ and its negative unipotent radical $\mathfrak{u}^-$ to $A_{ji}$ or equivalently to $C_{ij}$ and its semisimple part $\mathfrak{s}$ to $D_{ij}$ or equivalently to $D_{n-i,j}$. A nonstandard Hodge triple $H_{ij}^c$, with $n-i=j$, is equivalent with prescribing the natural Lie algebra embedding of $\mathfrak{so}(2h^{i,n-i},\C)$ or $\mathfrak{so}(2h^{i,n-i}+1,\C)$, respectively, for $n$ even and $\mathfrak{sp}(2h^{i,n-i},\C)$ for $n$ odd respectively, to $\g$ respecting the block structure. 
\end{thm}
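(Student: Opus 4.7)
The plan is to identify each Hodge triple with an explicit closed root-subsystem of $\theta$ in $\g$ and then recognize the resulting subalgebra as the image of one of the $\mathfrak{sl}$-, $\mathfrak{so}$-, or $\mathfrak{sp}$-embeddings already spelled out in \textit{Section} \ref{liestructure}. Both directions of the ``equivalence'' then reduce to a matching of root vectors block by block: the forward direction produces the embedding from the triple by spanning the indicated blocks, while the reverse direction reads off the triple uniquely from any embedding satisfying the prescribed Cartan/Levi/unipotent incidence conditions.

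For a standard Hodge triple $H_{ij} = [A_{ij}, D_{ij}, A_{ji}]$, I would first set $k := f_i + f_j = h^{n-i,i} + h^{n-j,j}$ and note that the combined index set $\mathcal{I}_i \cup \mathcal{I}_j$ has cardinality $k$. The roots supporting $A_{ii}, A_{jj}, A_{ij}, A_{ji}$ form precisely
$$\{\pm(\varepsilon_{i_1} - \varepsilon_{i_2}) : i_1 \ne i_2 \in \mathcal{I}_i \cup \mathcal{I}_j\},$$
which is a closed subsystem of $\theta$ of type $A_{k-1}$. Together with the toral slice $\h_{ij} = \h_{ii} \oplus \h_{jj}$ this spans a reductive subalgebra whose derived subalgebra is $\mathfrak{sl}(k,\C)$, realized through the diagonal $\mathfrak{sl}$-embedding recalled in Sections \ref{so}, \ref{soo}, \ref{sp}. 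Bracket closure is automatic since the root set is a closed subsystem, and the correspondence Cartan/$\mathfrak{u}^+$/$\mathfrak{u}^-$/$\mathfrak{s}$ $\leftrightarrow$ $\h_{ij}$/$A_{ij}$/$A_{ji}$/$D_{ij}$ matches the statement by construction.

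For the $C/B$ variant $H_{ij}^c = [C_{ij}, D_{n-i,j}, B_{n-i,n-j}]$ with $n-i \ne j$, the same argument applies after passing through the \emph{anti-diagonal} $\mathfrak{sl}$-embedding described in those same subsections. Under the sign-flip $\varepsilon_{i_1} \mapsto -\varepsilon_{i_1}$ on indices $i_1 \in \mathcal{I}_{n-i}$, the roots
$$\theta_{ij}^c \cup \theta_{ji}^b \cup \{\pm(\varepsilon_{i_1}-\varepsilon_{i_1'}) : i_1 \ne i_1' \in \mathcal{I}_{n-i}\} \cup \{\pm(\varepsilon_{i_2}-\varepsilon_{i_2'}) : i_2 \ne i_2' \in \mathcal{I}_j\}$$
are identified with an $A_{k-1}$-subsystem of $\theta$, now with $k = h^{i,n-i} + h^{n-j,j}$, and $C_{ij}, B_{n-i,n-j}$ play the role of the off-diagonal blocks of a maximal parabolic in $\mathfrak{sl}(k,\C)$.

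For a non-standard triple $H_{ij}^c$ with $n-i = j$, the Levi reduces to $A_{jj}$ alone, while $C_{jj}$ and $B_{jj}$ become antisymmetric (resp.\ symmetric) $f_j \times f_j$ blocks in the orthogonal (resp.\ symplectic) setting; together with $A_{jj}$ they precisely fill out the $2f_j \times 2f_j$ block-matrix presentation of $\mathfrak{so}(2f_j, \C)$ (type D) or $\mathfrak{sp}(2f_j, \C)$ (type C) recorded in \ref{so} and \ref{sp}, and after adjoining the short-root pieces $(y_{jm}, x_{jm})$ one recovers $\mathfrak{so}(2f_j+1, \C)$ (type B) as in \ref{soo}. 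Closure of the bracket is again visible at the root level. The main bookkeeping obstacle will be managing the symmetric-pair identifications $(A_{ij}, -A_{n-j,n-i}^t)$, $(C_{ij}, C_{n-j,n-i})$, $(B_{ij}, B_{n-j,n-i})$ coherently across the three group types, and---in type B---verifying that the short-root contributions $x, y$ slot into the standard $\mathfrak{so}(2f_j+1, \C)$ presentation without violating the (anti)symmetry of $C_{jj}$ and $B_{jj}$; each of these checks ultimately reduces to inspecting the action of a root-adapted basis of $V_\C$ under the chosen block.
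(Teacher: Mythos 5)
Your proposal is correct and follows essentially the same route as the paper, which simply compares the root structure of the blocks with the diagonal and anti-diagonal $\mathfrak{sl}$-embeddings (and the natural $\mathfrak{so}$/$\mathfrak{sp}$ block presentations) recorded in \textit{Section} \ref{liestructure}; the paper's own proof states this comparison in two lines and calls it immediate. Your version merely spells out the closed root-subsystem identification and the sign-flip for the $C/B$ blocks in more detail than the paper does.
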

\begin{proof}
By comparing the root structure for each of the classical Lie algebras as described in \textit{ section } \ref{liestructure} with the root structure describing each of the blocks, it is immediate to see that the Hodge triple $H_{ij}$ describes a diagonal embedding of $sl(h^{n-i,i}+h^{n-j,j}, \C)$ inside $\g$ and the Hodge triple $H_{ij}^c$ an antidiagonal embedding of $sl(h^{i,n-i}+h^{n-j,j},\C)$. 

\end{proof}
\noindent
What we have seen until now is that one can think of the direct sum $\g^{-p,p}\oplus \g^{0,0}\oplus\g^{p,-p}$, for each $p>0$, by replacing if necessary $\g^{0,0}$ by a subspace, as a direct sum of Hodge triples. Thus the Hodge triples are the building blocks of the lie algebra $\g$ when viewed as a Hodge theoretic object, and each Hodge triple is equivalent to a Lie algebra homomorphism of a classical Lie algebra into $\g$. In most of the cases, and in particular for any $\g^{-p,p}$ with $p$ odd, the classical Lie algebra is of type $A$ and thus the name (standard) \textit{Hodge triple}.
\newline
\noindent
\newline
\textbf{Example}. Let $l=6$, the group $G=SO(12,\C)$, the weight $n=4$ and thus $m=2$. It thus follows that the Hodge decomposition contains $5$ members, namely $$V_\C=V^{4,0}\oplus V^{3,1}\oplus V^{2,2}\oplus V^{1,3}\oplus V^{0,4},$$
and the signature sequence is given by $(i^{4-0},i^{3-1},i^{2-2},i^{1-3},i^{0-4})$, which after computing the powers is $+-+-+$.
\noindent
Consider the following set of Hodge numbers: $h^{4,0}=2$, $h^{3,1}=2$ and $h^{2,2}=4$. It thus follows that $G_0=SO(8,4)$ and the period domain is the $G_0$-open orbit parametrized by the signature sequence $+-+-+$ in the flag manifold $G/P$ parametrized by the dimension sequence $(2,2,4,2,2)$. The base point in $D$ is the Hodge filtration associated to the following Hodge decomposition: $V^{4,0}=<e_1,e_2> $, $V^{3,1}=<e_5,e_6>$, $V^{2,2}=<e_3,e_4,e_9,e_{10},>$, $V^{1,3}=<e_{11},e_{12}>$, $V^{0,4}=<e_7,e_8>.$
Since $m=2$, it follows that $\mathcal{J}_1=\{0,2,1\}$ and $\mathcal{J}_2=\{4,2,3\}$ and the blocks $A$, $B$, $C$ and $(-A^t)$ of any element $g\in \mathfrak{so}(12,\C)$ are further divided into $3\times 3$ block matrices.   

$$g=\begin{pmatrix}
\color{blue}{A_{00}} & A_{02} & \color{red}{A_{01}} & B_{04} & B_{02} & \color{green}{B_{03}} \\
A_{20} & A_{22} & A_{21} & B_{24} & B_{22} & B_{23} \\
\color{red}{A_{10}} & A_{12} &\color{blue}{ A_{11}} & \color{green}{B_{14}} & B_{12} & B_{13} \\
C_{40} & C_{42} & \color{green}{C_{41}} & \color{blue}{-A_{44}^t} & -A^t_{42} & -A^t_{43} \\
C_{20} & C_{22} & C_{21} & -A^t_{24} & -A^t_{22} & -A^t_{23} \\
\color{green}{C_{30}} & C_{32} & C_{31} & -A^t_{34} & -A^t_{32} & \color{blue}{-A^t_{11}} 
\end{pmatrix}.$$
In the matrix picture two examples of Hodge triples are depicted, namely the Hodge triple $H_{01}=[A_{01}, (A_{00},A_{11}), A_{10}]$ and $H_{41}^c=[C_{41}, (A_{00},A_{11}), B_{14}]$, with the semisimple part depicted in blue and the unipotent radicals in red and green respectively. The $\g^{-p,p}$ pieces are given by the following blocks written so as to explicitly point out the negative and positive unipotent components: 
\begin{enumerate}
\item{
\begin{itemize}
\item{$\g^{0,0}=A_{00}\oplus A_{22}\oplus A_{11}\oplus C_{22}\oplus B_{22}$,}
\end{itemize}
}
\item{\begin{itemize}
\item{$\g^{-1,1}=A_{10}\oplus A_{21}\oplus C_{21}\oplus C_{32},$}
\item{$\g^{1,-1}=A_{01}\oplus A_{12}\oplus B_{12} \oplus B_{23},$}
\end{itemize}
}
\item{\begin{itemize}
\item{$\g^{-2,2}=A_{20}\oplus C_{42}\oplus C_{20}$,}
\item{$\g^{2,-2}=A_{02}\oplus B_{24}\oplus B_{02}$,}
\end{itemize}
}

\item{\begin{itemize}
\item{$\g^{-3,3}= C_{30}\oplus C_{41}$,}
\item{$\g^{3,-3}= B_{03}\oplus B_{14}$,}
\end{itemize}
}

\item{\begin{itemize}
\item{$\g^{-4,4}= C_{40}$,}
\item{$\g^{4,-4}= B_{04}$.}
\end{itemize}
}
\end{enumerate}
\noindent
\begin{defi}
Let $\g$ be a complex semisimple Lie algebra, $\h$ a Cartan subalgebra and $\theta$ a system of roots of $\g$ with respect to $\h$. A subset of roots $\psi$ of $\theta$ is called commutative if no two roots $\alpha$ and $\beta$ in $\psi$ add up to a root, i.e. $\alpha+\beta\ne$ root, for all $\alpha,\beta\in \psi$.
\end{defi}
\noindent
Note that if $\psi$ is a commutative set of roots and $\mathfrak{a}_\psi$ is the subspace of $\g$ which contains the root vectors of each of the roots in $\psi$, then $\mathfrak{a}_{\psi}$ is abelian, i.e. $[\mathfrak{a}_{\psi},\mathfrak{a}_\psi]=0$.
\begin{lemma}\label{primalema}
The set of roots $\theta_{ij}$, $\theta_{ji}$, $\theta_{ij}^c$ and $\theta_{ji}^c$ describing the unipotent radicals of a standard Hodge triple $H_{ij}$ or $H_{ij}^c$ respectively, are commutative, i.e. $A_{ij}$, $A_{ji}$, $C_{ij}$, $B_{ji}$ are abelian.
\end{lemma}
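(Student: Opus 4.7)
The plan is to verify commutativity directly by examining sums $\alpha+\beta$ of pairs of roots and checking that none of them lies in the root system of the relevant classical type. The key structural input is the disjointness of the index sets $\mathcal{I}_i, \mathcal{I}_j, \mathcal{I}_{n-i}$ arising from the order-preserving partitions of the sections \ref{combinatorics} preceding Proposition \ref{orthogonaleven}, and the very restrictive shape of the root systems of types (A), (B), (C), (D) listed in Section \ref{liestructure}.

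First I would handle $\theta_{ij}$. Take two roots $\alpha=-\varepsilon_a+\varepsilon_b$ and $\beta=-\varepsilon_c+\varepsilon_d$ with $a,c\in\mathcal{I}_j$ and $b,d\in\mathcal{I}_i$; their sum is
$$\alpha+\beta=-\varepsilon_a-\varepsilon_c+\varepsilon_b+\varepsilon_d.$$
Because $\mathcal{I}_i\cap\mathcal{I}_j=\emptyset$ we have $\{a,c\}\cap\{b,d\}=\emptyset$, so no cancellation is possible between a negative and a positive term. The only possible simplification is $a=c$ (giving $-2\varepsilon_a$) or $b=d$ (giving $+2\varepsilon_b$). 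In every subcase the resulting expression either involves three or four distinct $\varepsilon$-indices or contains a coefficient of absolute value $2$ together with another nonzero term, and no such vector appears in the root systems of (A), (B), (C), (D) — the long roots there are $\pm\varepsilon_p\pm\varepsilon_q$, the short roots (when present) are $\pm\varepsilon_p$ in type (B) and $\pm 2\varepsilon_p$ in type (C), all involving at most two distinct indices. The same calculation, with the roles of positive and negative signs reversed, handles $\theta_{ji}$.

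Next I would treat $\theta_{ij}^c$. For $\alpha=-\varepsilon_a-\varepsilon_b$ and $\beta=-\varepsilon_c-\varepsilon_d$ with $a,c\in\mathcal{I}_j$, $b,d\in\mathcal{I}_{n-i}$ (and $\mathcal{I}_j\cap\mathcal{I}_{n-i}=\emptyset$ since we are in the standard case $n-i\ne j$), the sum is
$$\alpha+\beta=-\varepsilon_a-\varepsilon_c-\varepsilon_b-\varepsilon_d,$$
all of whose coefficients are negative. The possible collapses $a=c$ and/or $b=d$ yield expressions of the form $-2\varepsilon_a-2\varepsilon_b$, $-2\varepsilon_a-\varepsilon_b-\varepsilon_d$, or $-\varepsilon_a-\varepsilon_c-\varepsilon_b-\varepsilon_d$ with $\{a,c\}$ disjoint from $\{b,d\}$; inspection of the root lists of (B), (C), (D) shows that none of these is a root (a negative root of these systems has coefficient sum at most $2$ in absolute value and involves at most two indices). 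The argument for $\theta_{ji}^b$ is identical after replacing every sign.

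The only step that requires a bit of care is keeping track of which indices live in which piece of the partition: this is what guarantees the disjointness $\mathcal{I}_i\cap\mathcal{I}_j=\emptyset$ in the $A_{ij}$ case and $\mathcal{I}_j\cap\mathcal{I}_{n-i}=\emptyset$ in the standard $C_{ij}/B_{n-i,n-j}$ case, and this disjointness is exactly what prevents $\alpha+\beta$ from collapsing to one of the two-index shapes allowed in a classical root system. Once this is in place, the commutativity of the root subsets translates immediately into $[A_{ij},A_{ij}]=0$, $[A_{ji},A_{ji}]=0$, $[C_{ij},C_{ij}]=0$, $[B_{ji},B_{ji}]=0$ by the standard formula $[\g_\alpha,\g_\beta]\subset\g_{\alpha+\beta}$ (the bracket vanishing whenever $\alpha+\beta$ is neither $0$ nor a root).
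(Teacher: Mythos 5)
Your proof is correct and follows essentially the same route as the paper: a direct check that the sum of any two roots in $\theta_{ij}$, $\theta_{ji}$, $\theta_{ij}^c$, $\theta_{ji}^c$ is never a root, using the disjointness of the index sets $\mathcal{I}_i$, $\mathcal{I}_j$, $\mathcal{I}_{n-i}$ for the $A$-blocks and the fact that a sum of two roots of the form $\mp\varepsilon_{i_1}\mp\varepsilon_{i_2}$ with all signs equal can never be a root for the $C$- and $B$-blocks. Your case enumeration of the possible index collapses is in fact slightly more explicit than the paper's, but the underlying argument is the same.
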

\begin{proof}
For $l_1,l_2\in \mathcal{I}_{i}$ and $l_3,l_4\in \mathcal{I}_{j}$ we have:
$$(-\varepsilon_{l_1}+\varepsilon_{l_3}) + (-\varepsilon_{l_2}+\varepsilon_{l_4})=
\begin{cases}
 -\varepsilon_{l_1}+\varepsilon_{l_4}, \text{ for } l_2=l_3, l_1\ne l_4\\
 -\varepsilon_{l_3}+\varepsilon_{l_2}, \text{ for } l_1=l_4, l_2\ne l_3\\ 
\text{ not a root otherwise.}
\end{cases}$$
However, because $i\ne j$, it follows that $\mathcal{I}_i\cap\mathcal{I}_j=\emptyset$ and thus only the third case occurs. The statement is clear for $C_{ij}$ and $B_{ji}$ respectively, since $\theta_{ij}^c$ and $\theta_{ji}^c$ respectively contain roots of the form $-\varepsilon_{i_1}-\varepsilon_{i_2}$ and $\varepsilon_{i_1}+\varepsilon_{i_2}$ respectively and for example $(-\varepsilon_{i_1}-\varepsilon_{i_2}) +(-\varepsilon_{i_3}-\varepsilon_{i_4})$ is never a root.
\end{proof}
\noindent
Let $\mathcal{I}_{i,j}:=\mathcal{I}_i\times \mathcal{I}_j$.
\begin{lemma}\label{comutativitate}
Let $\mathcal{I}_{i_1,j_1}$ and $\mathcal{I}_{i_2,j_2}$ be two sets of indices parametrizing roots corresponding to unipotent radicals of standard Hodge triples. Then the set of roots indexed by $\mathcal{I}_{i_1,j_1}$ and $\mathcal{I}_{i_2,j_2}$ is commutative if and only if $i_1\ne j_1\ne  i_2 \ne j_2\ne i_1$.
\end{lemma}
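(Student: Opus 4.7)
The approach is a direct root-theoretic computation that extends the case analysis already carried out in Lemma \ref{primalema}. Recall that the roots parametrized by an index pair $(i,j)$ associated to a standard Hodge triple are, depending on the block type, of the form $\varepsilon_a - \varepsilon_b$ (an $A_{ij}$-block, with $a \in \mathcal{I}_i$, $b \in \mathcal{I}_j$), $-\varepsilon_a - \varepsilon_b$ (a $C_{ij}$-block, with the indices drawn from $\mathcal{I}_{n-i}$ and $\mathcal{I}_j$), or $\varepsilon_a + \varepsilon_b$ (a $B_{ij}$-block). The only way two such roots can add to another root of $\g$ is through a cancellation of one of the $\varepsilon$'s, and any such cancellation forces the two cancelled indices to lie in a common $\mathcal{I}_k$; since the $\mathcal{I}_k$ are pairwise disjoint, this forces the corresponding labels to be equal.

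In the pure $A$-type case, take $\alpha = \varepsilon_a - \varepsilon_b$ in the set associated to $\mathcal{I}_{i_1,j_1}$ and $\beta = \varepsilon_c - \varepsilon_d$ in the set associated to $\mathcal{I}_{i_2,j_2}$, with $a \in \mathcal{I}_{i_1}$, $b \in \mathcal{I}_{j_1}$, $c \in \mathcal{I}_{i_2}$, $d \in \mathcal{I}_{j_2}$. Then $\alpha + \beta$ is a root precisely when either $b = c$ (yielding $\varepsilon_a - \varepsilon_d$ and forcing $j_1 = i_2$) or $a = d$ (yielding $\varepsilon_c - \varepsilon_b$ and forcing $i_1 = j_2$); no other simplification can produce a root. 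Hence commutativity of the combined set is equivalent to $j_1 \ne i_2$ and $i_1 \ne j_2$, which together with the standing triple conditions $i_1 \ne j_1$ and $i_2 \ne j_2$ encode exactly the cyclic chain $i_1 \ne j_1 \ne i_2 \ne j_2 \ne i_1$.

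The remaining cases (two $C$-blocks, two $B$-blocks, $A$ paired with $C$ or $B$, and $C$ paired with $B$) are handled in the same manner: write the generic sum, observe that it is a root of $\g$ only if enough $\varepsilon$'s cancel, and check that each permissible cancellation corresponds to one of the forbidden cyclic equalities $i_1 = j_2$ or $j_1 = i_2$. For instance, summing $\varepsilon_a - \varepsilon_b$ from an $A$-block with $-\varepsilon_c - \varepsilon_d$ from a $C$-block gives a root of $\g$ only if $a = c$ or $a = d$, each again identifying an adjacent pair in the cyclic chain. The symmetric identifications $A_{ij} \leftrightarrow -A^t_{n-j,n-i}$ and $(C,B)_{ij} \leftrightarrow (C,B)_{n-j,n-i}$ contribute no new roots beyond those already accounted for.

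The main obstacle is not conceptual but combinatorial: one must verify uniformly across block-type combinations that every failure of commutativity is witnessed by an adjacent cyclic coincidence, and that the cyclic condition is sufficient to rule out any such coincidence. Once the pairwise disjointness of the $\mathcal{I}_k$ is invoked, the verification reduces to enumerating the handful of possible cancellation patterns.
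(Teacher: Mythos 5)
Your proposal follows essentially the same route as the paper's proof: a case-by-case computation of when the sum of two roots from the relevant blocks is again a root, using the pairwise disjointness of the sets $\mathcal{I}_k$ to translate each possible cancellation of an $\varepsilon$ into an equality of block labels, which is then matched against the cyclic condition. The paper simply writes out the $C/B$ and mixed $A$-with-$C/B$ cases explicitly (with the labels $n-i$ appearing for the $C$- and $B$-blocks, exactly as you indicate), where you sketch them, so there is no substantive difference in method.
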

\begin{proof}
In the case of standard Hodge triples involving only $A$-blocks, assume without loss of generality that $i_1<j_1\le m$ and $i_2<j_2\le m$. For $l_1,l_2\in \mathcal{I}_{i_1,j_1}$ and $l_3,l_4\in \mathcal{I}_{i_2,j_2}$ we have on $(A_{j_1,i_1},A_{j_2,i_2})$:
$$(-\varepsilon_{l_1}+\varepsilon_{l_2}) + (-\varepsilon_{l_3}+\varepsilon_{l_4})=
\begin{cases}
 -\varepsilon_{l_1}+\varepsilon_{l_4}, \text{ for } l_2=l_3, l_1\ne l_4\\
 -\varepsilon_{l_3}+\varepsilon_{l_2}, \text{ for } l_1=l_4, l_2\ne l_3\\ 
\text{ not a root otherwise.}
\end{cases}$$
Since all the sets $\mathcal{I}_i$ and $\mathcal{I}_j$ are disjoint for $i\ne j$, it follows that $l_1=l_4\in \mathcal{I}_{i_1}\cap \mathcal{I}_{j_2}$ if and only if $i_1=j_2$ and $l_2=l_3\in \mathcal{I}_{i_2}\cap \mathcal{I}_{j_1}$ if and only if $i_2=j_1$. By swapping $i_1$ and $j_1$ or $i_2$ and $j_2$ the result follows in general.
\newline
\noindent
\newline
In the case of standard Hodge triples involving only $C$- and $B$-blocks, let $i_1\ge m, j_1\le m$, $i_2\ge m, j_2\le m$.
For $l_1,l_2\in \mathcal{I}_{n-i_1,j_1}$ and $l_3,l_4\in \mathcal{I}_{n-i_2,j_2}$ we have on $(C_{i_1,j_1},B_{n-i_1,n-j_1})$ and $(C_{i_2,j_2},B_{n-i_2,n-j_2})$:
$$(-\varepsilon_{l_1}-\varepsilon_{l_2}) + (\varepsilon_{l_3}+\varepsilon_{l_4})=
\begin{cases}
 -\varepsilon_{l_1}+\varepsilon_{l_3}, \text{ for } l_2=l_4, l_1\ne l_3\\
 -\varepsilon_{l_1}+\varepsilon_{l_4}, \text{ for } l_2=l_3, l_1\ne l_4\\
 -\varepsilon_{l_2}+\varepsilon_{l_3}, \text{ for } l_1=l_4, l_2\ne l_3\\
 -\varepsilon_{l_2}+\varepsilon_{l_4}, \text{ for } l_1=l_3, l_2\ne l_4\\ 
\text{ not a root otherwise.}
\end{cases}$$
Since all the sets $\mathcal{I}_i$ and $\mathcal{I}_j$ are disjoint for $i\ne j$, it follows that $l_2=l_4\in \mathcal{I}_{j_1}\cap \mathcal{I}_{j_2}$ if and only if $j_1=j_2$, $l_2=l_3\in \mathcal{I}_{j_1}\cap \mathcal{I}_{n-i_2}$ if and only if $n-i_2=j_1$,$l_1=l_4\in \mathcal{I}_{n-i_1}\cap \mathcal{I}_{j_2}$ if and only if $n-i_1=j_2$, $l_1=l_3\in \mathcal{I}_{n-i_1}\cap \mathcal{I}_{n-i_2}$ if and only if $n-i_1=n-i_2$.
\newline
\noindent
\newline
Finally, consider the case of one standard Hodge triple involving $A$-blocks and one standard Hodge triple involving $C$-and $B$-blocks and assume without loss of generality that $i_1<j_1\le m$, $i_2\ge m$ and $j_2\le m$.  For $l_1,l_2\in\mathcal{I}_{i_1,j_1}$ and $l_3,l_4\in \mathcal{I}_{n-i_2,j_2}$ we have on $(A_{i_1,j_1},C_{i_2,j_2})$: 
$$(-\varepsilon_{l_1}+\varepsilon_{l_2}) + (-\varepsilon_{l_3}-\varepsilon_{l_4})=
\begin{cases}
 -\varepsilon_{l_1}-\varepsilon_{l_4}, \text{ for } l_2=l_3, l_1\ne l_4\\
 -\varepsilon_{l_1}-\varepsilon_{l_3}, \text{ for } l_2=l_4, l_1\ne l_3\\ 
\text{ not a root otherwise.}
\end{cases}$$
It follows that $l_2=l_3\in \mathcal{I}_{j_1}\cap \mathcal{I}_{n-i_2}$ if and only if $j_1=n-i_2$ and $l_2=l_4\in \mathcal{I}_{j_1}\cap \mathcal{I}_{j_2}$ if and only if $j_1=j_2$. On $(A_{i_1,j_1},B_{n-i_2,n-j_2})$ we have: 
$$(-\varepsilon_{l_1}+\varepsilon_{l_2}) + (\varepsilon_{l_3}\varepsilon_{l_4})=
\begin{cases}
 \varepsilon_{l_2}+\varepsilon_{l_4}, \text{ for } l_1=l_3, l_2\ne l_4\\
 \varepsilon_{l_2}+\varepsilon_{l_3}, \text{ for } l_1=l_4, l_2\ne l_3\\ 
\text{ not a root otherwise.}
\end{cases}$$
It follows that $l_1=l_3\in \mathcal{I}_{i_1}\cap \mathcal{I}_{n-i_2}$ if and only if $i_1=n-i_2$ and $l_1=l_4\in \mathcal{I}_{i_1}\cap \mathcal{I}_{j_2}$ if and only if $i_1=j_2$.

\end{proof}

\noindent
\textbf{Hodge brackets of standard Hodge triples}
\newline
\noindent
\newline
Assume that $i_1<j_1$, $i_2<j_2$ and $H_{i_1,j_1}=[A_{i_1,j_1},(A_{i_1,i_1},A_{j_1,j_1}), A_{j_1,i_1}]$, $H_{i_2,j_2}=[A_{i_2,j_2},(A_{i_2,i_2},A_{j_2,j_2}), A_{j_2,i_2}]$ are two standard Hodge triples. As a corollary of \textit{Lemma} \ref{comutativitate}, we have the following bracketing relations:
$$[A_{i_1,j_1},A_{i_2,j_2}]=
\begin{cases}
 A_{i_1,j_2}, \text{ for } j_1=i_2, i_2< j_2\\
 A_{i_2,j_1}, \text{ for } i_1=j_2, i_2< j_1\\ 
\text{ $0$, otherwise.}
\end{cases}$$
$$[A_{j_1,i_1},A_{j_2,i_2}]=
\begin{cases}
 A_{j_2,i_1}, \text{ for } j_1=i_2, i_1< j_2\\
 A_{i_1,i_2}, \text{ for } i_1=j_2, i_2< j_1\\ 
\text{ $0$, otherwise.}
\end{cases}$$
$$[A_{i_1,j_1},A_{j_2,i_2}]=
\begin{cases}
 A_{i_1,i_2}, \text{ for } j_1=j_2, i_2< i_1\\
 A_{j_2,j_1}, \text{ for } i_1=i_2, j_1< j_2\\ 
\text{ $0$, otherwise.}
\end{cases}$$
$$[A_{j_1,i_1},A_{i_2,j_2}]=
\begin{cases}
 A_{i_2,j_1}, \text{ for } j_1=j_2, i_2< i_1\\
 A_{j_1,j_2}, \text{ for } i_1=i_2, j_1< j_2\\ 
\text{ $0$, otherwise.}
\end{cases}$$
Thus we can formally define a \textbf{Hodge bracket}, respecting the usual Lie bracket, of two standard Hodge triples involving $A$-blocks as 

$$[H_{i_1,j_1},H_{i_2,j_2}]:=
\begin{cases}
 H_{i_1,j_2}, \text{ for } j_1=i_2, i_1< j_2\\
 H_{i_2,j_1}, \text{ for } i_1=j_2, i_2< j_1\\
 H_{i_1,i_2}, \text{ for } j_1=j_2, i_1< i_2\\
 H_{j_1,j_2}, \text{ for } i_1=i_2, j_1< j_2\\
\text{ $0$, otherwise.}
\end{cases}$$
and these cases are mutually disjoint so the formal bracket is well-defined. Observe that if $H_{i_1,j_1}$ and $H_{i_2,j_2}$ are standard Hodge triples in $\mathfrak{m}\oplus \g^{0,0}$ and their Hodge bracket is non-trivial, then the Hodge bracket is a Hodge triple in $\k$. This follows from the fact that $(i_1,j_1)$ and $(i_2,j_2)$ have different parities. Thus for example, if $j_1=i_2$ are both odd, then the Hodge bracket is the Hodge triple $H_{i_1,j_2}$ with $i_1$ and $j_2$ even. 
\newline
\noindent
\newline
Assume that $i_1<j_1$, $n-i_2<j_2$ and $H_{i_1,j_1}=[A_{i_1,j_1},(A_{i_1,i_1},A_{j_1,j_1}), A_{j_1,i_1}]$, $H_{i_2,j_2}^c=[A_{i_2,j_2},(A_{i_2,i_2},A_{j_2,j_2}), A_{j_2,i_2}]$ are two standard Hodge triples. As a corollary of \textit{Lemma} \ref{comutativitate}, we have the following bracketing relations:
$$[A_{i_1,j_1},C_{i_2,j_2}]=
\begin{cases}
 C_{n-j_1,j_2}, \text{ for } i_1=n-i_2,\\
 C_{i_2,j_1}, \text{ for } i_1=j_2,\\ 
\text{ $0$, otherwise.}
\end{cases}$$
$$[A_{j_1,i_1},B_{n-i_2,n-j_2}]=
\begin{cases}
 B_{j_1,n-j_2}, \text{ for } i_1=n-i_2, \\
 B_{n-i_2,n-j_1}, \text{ for } i_1=j_2, \\ 
\text{ $0$, otherwise.}
\end{cases}$$
$$[A_{i_1,j_1},B_{n-i_2,n-j_2}]=
\begin{cases}
 B_{i_1,n-j_2}, \text{ for } j_1=n-i_2, \\
 B_{n-i_2,n-i_1}, \text{ for } j_1=j_2,\\ 
\text{ $0$, otherwise.}
\end{cases}$$
$$[A_{j_1,i_1},C_{i_2,j_2}]=
\begin{cases}
 C_{n-i_1,j_2}, \text{ for } j_1=n-i_2, \\
 C_{i_2,i_1}, \text{ for } j_1=j_2, \\ 
\text{ $0$, otherwise.}
\end{cases}$$
Thus we can formally define a \textbf{Hodge bracket}, respecting the usual Lie bracket, of one standard Hodge triple involving $A$-blocks and one standard Hodge triple involving $B$- and $C$-blocks as follows: 
$$[H_{i_1,j_1},H_{i_2,j_2}^c]:=
\begin{cases}
 H_{n-j_1,j_2}^c, \text{ for } i_1=n-i_2, \\
 H_{i_2,j_1}^c, \text{ for } i_1=j_2, \\
 H_{i_2,i_1}^c, \text{ for } j_1=j_2, \\
 H_{n-i_1,j_2}^c, \text{ for } j_1=n-i_2, \\
\text{ $0$, otherwise.}
\end{cases}$$
and these cases are mutually disjoint so the formal bracket is well-defined. Observe that if $H_{i_1,j_1}$ and $H_{i_2,j_2}^c$ are standard Hodge triples in $\mathfrak{m}\oplus \g^{0,0}$ and their Hodge bracket is non-trivial, then the Hodge bracket is a Hodge triple in $\k$. This follows from the fact that $(i_1,j_1)$ and $(i_2,j_2)$ have different parities. Thus for example, if $n=2m+1$, $i_1=n-i_2$ with $i_2$ odd, then $i_1$ is even and thus $j_1$ is odd. Therefore, $n-j_1$ and $j_2$ are both even and the Hodge bracket is the Hodge triple $H_{n-j_1,j_2}^c\in \k$ . 
\newline
\noindent
\newline
Assume that $n-i_1<j_1$, $n-i_2<j_2$ and $$H_{i_1,j_1}^c=[C_{i_1,j_1},(A_{n-i_1,n-i_1},A_{j_1,j_1}), B_{n-i_1,n-j_1}],$$ $H_{i_2,j_2}^c=[C_{i_2,j_2},(A_{n-i_2,n-i_2},A_{j_2,j_2}), B_{n-i_2,n-j_2}]$ are two standard Hodge triples. As a corollary of \textit{Lemma} \ref{comutativitate}, we have the following bracketing relations:
$$[C_{i_1,j_1},B_{n-i_2,n-j_2}]=
\begin{cases}
 A_{j_2,j_1}, \text{ for } n-i_1=n-i_2, j_1<j_2,\\
 A_{n-i_2,j_1}, \text{ for } n-i_1=j_2, j_1<n-i_2,\\ 
 A_{j_2,n-i_1}, \text{ for } j_1=n-i_2, n-i_1< j_2,\\
 A_{n-i_2,n-i_1}, \text{ for } j_1=j_2,n- i_1< n-i_2,\\ 
\text{ $0$, otherwise.}
\end{cases}$$
$$[C_{i_2,j_2},B_{n-i_1,n-j_1}]=
\begin{cases}
 A_{j_1,j_2}, \text{ for } n-i_1=n-i_2, j_1<j_2,\\
 A_{j_1,n-i_2}, \text{ for } n-i_1=j_2, j_1<n-i_2,\\ 
 A_{n-i_1,j_2}, \text{ for } j_1=n-i_2, n-i_1< j_2,\\
 A_{n-i_1,n-i_2}, \text{ for } j_1=j_2,n- i_1< n-i_2,\\ 
\text{ $0$, otherwise.}
\end{cases}$$
Thus we can formally define a \textbf{Hodge bracket}, respecting the usual Lie bracket, of two standard Hodge triples involving $B$- and $C$-blocks as 

$$[H_{i_1,j_1}^c,H_{i_2,j_2}^c]:=
\begin{cases}
 H_{j_1,j_2}, \text{ for } i_1=i_2, j_1< j_2\\
 H_{j_1,n-i_2}, \text{ for } n-i_1=j_2, j_1< n-i_2\\
 H_{n-i_1,j_2}, \text{ for } j_1=n-i_2, n-i_1< j_2\\
 H_{n-i_1,n-i_2}, \text{ for } j_1=j_2, i_2< i_1\\
\text{ $0$, otherwise.}
\end{cases}$$
and these cases are mutually disjoint so the formal bracket is well-defined. Observe that if $H_{i_1,j_1}^c$ and $H_{i_2,j_2}^c$ are standard Hodge triples in $\mathfrak{m}\oplus \g^{0,0}$ and their Hodge bracket is non-trivial, then the Hodge bracket is a Hodge triple in $\k$. This follows from the fact that $(i_1,j_1)$ and $(i_2,j_2)$ have different parities. Thus for example, if $n=2m+1$, $n-i_1=j_2$ with $i_1$ even, then $j_2$ is odd and thus $i_2$ is even. Therefore, $n-i_2$ and $j_1$ are both odd and the Hodge bracket is the Hodge triple $H_{j_1,n-i_2}\in \k$ . 
\newline
\noindent
\newline
We summarize the above results in one Lemma which is of course also a consequence of the more general fact that $[\mathfrak{m},\mathfrak{m}]\subset \k$.
\begin{lemma}\label{bracketcompact}
If $H_{i_1,j_1}$ (respectively $H_{i_1,j_1}^c$) and $H_{i_2,j_2}$ (respectively $H_{i_2,j_2}^c$) are two standard Hodge triples in $\mathfrak{m}^-\oplus\g^{0,0}\oplus\mathfrak {m}^+$ with non-trivial Hodge bracket, then their Hodge bracket is a standard Hodge triple in $\k$.  
\end{lemma}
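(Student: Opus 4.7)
The plan is a direct parity computation, reading off the Hodge-bracket formulas tabulated in the discussion preceding the lemma. First I would record the basic dictionary: by Propositions \ref{orthogonaleven}, \ref{symplectic} and \ref{orthogonalodd}, a standard Hodge triple $H_{ij}$ (respectively $H^c_{ij}$) has its unipotent radicals sitting inside $\g^{-(i-j),(i-j)}\oplus \g^{(i-j),-(i-j)}$. Since the Cartan decomposition splits $\g$ as $\k=\bigoplus_{f\equiv 0(2)}\g^{-f,f}$ and $\mathfrak{m}=\bigoplus_{f\equiv 1(2)}\g^{-f,f}$, such a triple lies in $\mathfrak{m}^-\oplus\g^{0,0}\oplus\mathfrak{m}^+$ precisely when $i-j$ is odd, and in $\k$ precisely when $i-j$ is even. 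Thus the lemma reduces to the purely combinatorial statement that, in each nontrivial case of the Hodge bracket, the index-difference of the output triple is even whenever both input triples have odd index-difference.

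The second step is case-by-case inspection of the three bracket formulas $(A,A)$, $(A,BC)$, $(BC,BC)$ already listed above. In each subcase the output index pair $(i',j')$ is tied to the input pairs through the active linking equation ($j_1=i_2$, $i_1=j_2$, $j_1=j_2$, $i_1=i_2$, $i_1=n-i_2$, or $n-i_1=j_2$), which in every instance yields an identity of the form
\[
  i'-j' = \pm(i_1-j_1)\pm(i_2-j_2).
\]
For example in the $(A,A)$ case with $j_1=i_2$ the output is $H_{i_1,j_2}$ and one has $i_1-j_2=(i_1-j_1)+(i_2-j_2)$; in the mixed $(A,BC)$ case with $i_1=n-i_2$ the output is $H^c_{n-j_1,j_2}$ and $n=i_1+i_2$ gives $(n-j_1)-j_2=(i_1-j_1)+(i_2-j_2)$; the other subcases, and all four subcases of $(BC,BC)$, follow by the same bookkeeping. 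Under the hypothesis that both $i_1-j_1$ and $i_2-j_2$ are odd, the right-hand side is a sum or difference of two odd integers and hence even, which by the dictionary of the first paragraph places the output triple in $\k$. Standardness of the output is automatic from the strict orderings ($i_1<j_2$, $j_1<i_2$, and so on) already built into the definition of each Hodge bracket.

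There is no serious obstacle here: the combinatorial work is already encoded in the bracket formulas, and the proof is a uniform parity check. The one mild point worth flagging is that when $n$ enters the linking equation (the mixed and $(BC,BC)$ cases) it always cancels in the parity computation, so the argument is insensitive to whether $n$ is even (types $B$ and $D$) or odd (type $C$); and the non-standard Hodge triples with $n-i=j$ are excluded from the domain of the Hodge bracket, so they play no role.
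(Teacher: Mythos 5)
Your proposal is correct and follows essentially the same route as the paper: the paper's own justification (given in the observations accompanying each Hodge-bracket formula) is exactly this parity check, phrased as "$(i_1,j_1)$ and $(i_2,j_2)$ have different parities, so the two surviving indices of the output have the same parity," which is equivalent to your identity $i'-j'=\pm(i_1-j_1)\pm(i_2-j_2)$ being even, with the occurrences of $n$ cancelling just as you note.
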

\noindent
For a standard Hodge triple $H_{ij}$, let $G_{ij}$ be the Lie subgroup of $G$ with Lie algebra $\mathfrak{sl}(h^{n-i,i+h^{n-j,j}},\C)$. Then $G_{ij}$ naturally acts on flag associated to the decomposition $V^{n-i,i}\oplus V^{n-j,j}\oplus V^{j,n-j}\oplus V^{i,n-i}$ with isotropy subgroup $P_{ij}$, the Lie subgroup with Lie algebra $A_{ij}\oplus A_{ii}\oplus A_{jj}$. Then $P_{ij}$ is a maximal parabolic subgroup of $G_{ij}$ and a subgroup of $P$ and $Z_{ij}:=G_{ij}/P_{ij}$ is an equivariently embedded Hermitian symmetric space of compact type of type $A$ in the flag manifold $Z$, i.e. a Grassmannian of $h^{n-i,i}$-planes in $\C^{h^{n-i,i}+h^{n-j,j}}$. Let $her_{ij}$ be the restriction of the Hermitian form $h$ from $V_{\C}$ to $V_{ij}:=V^{n-i,i}\oplus V^{n-j,j}\oplus V^{j,n-j}\oplus V^{i,n-i}$ and let $G_{ij}^0$ the group of isometries of $G_{ij}$ which preserve this Hermitian form. Then, it is immediate that $G_{ij}^0$ is a closed subgroup of $G_0$ and the open $G_{ij}^0$-orbit $D_{ij}^0$ on $G_{ij}/P_{ij}$ parametrized by the signature of the Hermitian form on $V_{ij}$ is the corresponding Hermitian symmetric space of non-compact type and an equivariantly embedded closed submanifold of $D$, i.e. Grassmannian of positive- (or negative-) definite $h^{n-i,i}$-planes in $\C^{h^{n-i,i}+h^{n-j,j}}$. We summarize this in the following lemma:
\begin{lemma}\label{ungrassmann}
The open $D_{ij}^0$-flag domain associated to the Hodge triple $H_{ij}$ is a closed, equivariantely, embedded submanifold of the period domain $D$. 
\end{lemma}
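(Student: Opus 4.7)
The plan is to prove the lemma by showing successively that $Z_{ij}$ embeds as a closed complex submanifold of $Z$, that $D_{ij}^0$ maps equivariantly into $D$ under this embedding, and finally that its image is closed in $D$. First, by \textit{Theorem} \ref{hodgetriple}, the Hodge triple $H_{ij}$ corresponds to an embedding of Lie algebras $\mathfrak{sl}(h^{n-i,i}+h^{n-j,j},\C) \hookrightarrow \g$; the connected subgroup $G_{ij}$ it integrates to is closed in $G$ (being semisimple and its Lie algebra stabilizing the decomposition $V_\C = V_{ij} \oplus V_{ij}^\perp$, where $V_{ij}^\perp$ is fixed pointwise). The subgroup $P_{ij}$ is precisely the stabilizer in $G_{ij}$ of the flag induced by $z_0$ on $V_{ij}$, and equals $G_{ij} \cap P$; thus the orbit map $gP_{ij} \mapsto gP$ is a well-defined, $G_{ij}$-equivariant, injective holomorphic map $Z_{ij} \to Z$. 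Because $Z_{ij}$ is compact (a flag manifold of a semisimple group), this map is a closed embedding.

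Second, the base point $z_0 \in D$ lies in the image of $Z_{ij}$ by construction, so $D_{ij}^0 = G_{ij}^0 \cdot z_0 \subset G_0 \cdot z_0 = D$, using that $G_{ij}^0$ is by definition a closed subgroup of $G_0$ (the isometries in $G_{ij}$ of the restricted Hermitian form $\mathrm{her}_{ij}$). The resulting map $D_{ij}^0 \hookrightarrow D$ factors through $Z_{ij} \hookrightarrow Z$, hence is a $G_{ij}^0$-equivariant holomorphic embedding whose image is locally closed (it is open in the closed submanifold $Z_{ij}$).

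Third, to establish closedness in $D$, I would prove the equality $D_{ij}^0 = D \cap Z_{ij}$. The inclusion $\subset$ is immediate from the second step. For the reverse inclusion, let $z \in D \cap Z_{ij}$. Since $G_{ij}$ fixes $V_{ij}^\perp$ pointwise, the Hodge filtration $\mathcal{F}_z$ differs from $\mathcal{F}_{z_0}$ only in the $V_{ij}$-directions, and the Hodge-Riemann form $h(\cdot,C\overline{\cdot})$ decomposes orthogonally as a sum of a contribution from $V_{ij}$ and one from $V_{ij}^\perp$. The latter is unchanged from $z_0$ and thus positive-definite, so the positivity condition at $z$ reduces to positivity of the restricted form $\mathrm{her}_{ij}$ along the varying flag in $V_{ij}$; but this is precisely the condition characterizing the open $G_{ij}^0$-orbit $D_{ij}^0$ in $Z_{ij}$. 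Hence $D_{ij}^0 = D \cap Z_{ij}$, which is closed in $D$ because $Z_{ij}$ is closed in $Z$.

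The main obstacle will be the third step: making rigorous the orthogonal splitting of the Hodge-Riemann relations along $V_\C = V_{ij} \oplus V_{ij}^\perp$. One must verify that the polarization $Q$ restricts non-degenerately to each summand (so that $V_{ij}^\perp$ really is a complement in the Hodge-theoretic sense) and that the $G_{ij}$-action does fix $V_{ij}^\perp$ pointwise, so that the Hodge-Riemann contribution from the complement is constant along the $G_{ij}^0$-orbit through $z_0$. Given the explicit block description of $\g_{ij}$ from \textit{Section} \ref{combinatorics}, both facts can be read off directly from the block structure of the Hodge triple.
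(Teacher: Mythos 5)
Your proposal is correct, and it is actually more detailed than what the paper provides: Lemma \ref{ungrassmann} appears there only as a summary of the preceding paragraph, in which the equivariant closed embedding of $Z_{ij}=G_{ij}/P_{ij}$ into $Z$, the closedness of $G_{ij}^0$ in $G_0$, and the identification of $D_{ij}^0$ with the Grassmannian of definite $h^{n-i,i}$-planes are all declared immediate, with no separate proof given. Your first two steps coincide with that implicit argument (with the useful extra precision that $P_{ij}=G_{ij}\cap P$, which is what makes $Z_{ij}\to Z$ injective). Where you genuinely diverge is the closedness of $D_{ij}^0$ in $D$: you prove the sharper statement $D_{ij}^0=D\cap Z_{ij}$ by splitting the polarization and the Hodge--Riemann positivity along $V_\C=V_{ij}\oplus V_{ij}^{\perp}$ and using that $G_{ij}$ fixes $V_{ij}^{\perp}$ pointwise, whereas the paper, at the point where it does argue closedness (end of the proof of \textit{Theorem} \ref{teoremaprincipala}), uses properness: a closed subgroup of $G_0$, which acts properly on $D$, again acts properly, so its orbit through $z_0$ is automatically closed. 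The properness route is shorter and requires no computation with $Q$ and $h$; your intersection route costs the orthogonality and non-degeneracy check you flag as the main obstacle, but this is indeed harmless because $V_{ij}$ and $V_{ij}^{\perp}$ are spanned by complementary subsets of the standard basis, on which both $Q$ and $h$ visibly split, and in exchange you obtain the more precise identity $D_{ij}^0=D\cap Z_{ij}$, which the properness argument does not yield. One minor remark: your description of $D\cap Z_{ij}$ as the definiteness locus of $her_{ij}$ on the moving plane, and hence as a single open $G_{ij}^0$-orbit, tacitly uses that the indices $i,j$ have opposite parity (the case relevant to the paper, where the Hodge triple sits in $\mathfrak{m}\oplus\g^{0,0}$); if they have equal parity the restricted form is definite, $G_{ij}^0$ is compact and the equality degenerates harmlessly to $D_{ij}^0=Z_{ij}\subset D$, so your argument still goes through.
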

\noindent
For the above example, let $H_{10}=(A_{01},(A_{00},A_{11}),A_{10})$. Then $G_{1,0}=SL(4,\C)$, $G_{1,0}^0=SU(2,2)$. The open orbit can be identified with the Grassmannian of postive $2$-planes in $\C^4$ embedded in its compact dual, the Grassmannian of $2$-planes in $\C^4$.
\begin{defi}
A \textbf{sub-Hodge triple} of a Hodge triple $H_{ij}$ (respectively $H_{ij}^c$), is a formal structure $\tilde{H}_{ij}:=[\tilde{A}_{ij},(\tilde{A}_{ii},\tilde{A}_{jj}),\tilde{A}_{ji}]$ (respectively $\tilde{H}_{ij}^c$) defined as follows. Consider $\tilde{\mathcal{I}}_i$ and $\tilde{\mathcal{I}}_j$, respectively a subset of $\mathcal{I}_i$ and $\mathcal{I}_j$ respectively. Finally, let $\tilde{A}_{ij}$ (resp. $\tilde{C}_{ij}$),  $\tilde{A}_{ii}$ (resp. $\tilde{A}_{n-i,n-i})$, $\tilde{A}_{jj}$ and $\tilde{A}_{ji}$ (resp. $\tilde{B}_{n-i,n-j}$) respectively the elements of $A_{ij}$ (resp. $C_{ij}$), $A_{ii}$ (resp. $A_{n-i,n-i}$), $A_{jj}$ and $A_{ji}$ (resp. $B_{n-i,n-j}$) respectively indexed by $\tilde{\mathcal{I}}_i \times \tilde{\mathcal{I}}_j$,  $\tilde{\mathcal{I}}_i \times \tilde{\mathcal{I}}_i$, $\tilde{\mathcal{I}}_j \times \tilde{\mathcal{I}}_j$ and $\tilde{\mathcal{I}}_j \times \tilde{\mathcal{I}}_i$ respectively.
\end{defi}
\noindent
It is immediate that standard $\tilde{H}_{ij}$, $\tilde{H}_{ij}^c$ (resp. non-standard type $(C)$, $\tilde{H}_{ij}^c$) itself corresponds to an $\mathfrak{sl}(\C)$ (resp. $\mathfrak{sp}(\C)$) embedding in $\g$ whose unipotent radicals and semisimple part can be viewed as sub-blocks of those of $H_{ij}$, $H_{ij}^c$. Furthermore, $\tilde{Z}_{ij}$ and $\tilde{D}_{ij}^0$ respectively are naturally closed sub-Grassmannians of $Z_{ij}$ and $D_{ij}^0$ respectively.
\newline
\noindent
\newline
Let $H_{i_1,j_1}$, $H_{i_2,j_2}$ and $H_{i_3,j_3}$ be three standard Hodge triples such that $j_1=i_2$ and $i_1=j_3$. Then by \textit{Lemma} \ref{bracketcompact} the formal bracket of these Hodge triples lies in $\k$. Thus in order to force the formal bracket to lie in $\mathfrak{m}$ one needs to partition the set of indices into disjoint subsets and pass to sub-Hodge triples.
\begin{thm}\label{teoremaprincipala}
Let $D'=G'_0/P_0'$ be a flag domain of Hodge type and $D=G_0/P_0$ a period domain which are embedded in their compact duals $Z'=G'/P'$ and $Z=G/P$. Then, there exist $\tilde{D}:=D_1\times \dots \times D_s$ a product of irreducible Hermitian symmetric spaces of non-compact type $(A)$ or type $(C)$, equivariantely embedded as closed submanifolds of $D$, such that $D'$ is a closed submanifold of $\tilde{D}$. In particular, $D'$ is a Hermitian symmetric space.    
\end{thm}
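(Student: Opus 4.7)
The plan begins with the key observation that the hypothesis $\mathfrak{u'}^-\subset\mathfrak{m}^-$, together with $\mathfrak{u'}^-$ being a Lie subalgebra of $\g'$, forces $\mathfrak{u'}^-$ to be abelian. Indeed, since $\mathfrak{u'}^-\subset\m$ and $[\m,\m]\subset\k$, one has
\[
[\mathfrak{u'}^-,\mathfrak{u'}^-]\;\subset\;\mathfrak{u'}^-\cap[\m,\m]\;\subset\;\m\cap\k\;=\;0.
\]
Since $P_0'$ is compact and $\mathfrak{u'}^-$ is the (now abelian) nilradical of $\mathfrak{p}'$, this already implies, by the standard characterisation of Hermitian symmetric flag manifolds via abelian nilradical, that $D'$ is a Hermitian symmetric space of non-compact type. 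It therefore remains only to realise $D'$ as a closed submanifold of a product of Grassmannians of type $(A)$ or $(C)$ inside $D$.

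Next I would invoke the block decomposition of $\m^-$ developed in Section \ref{combinatorics}. The subspace $\mathfrak{u'}^-\subset\m^-=\bigoplus_{p\text{ odd},\,p\ge 1}\g^{-p,p}$ has a well-defined projection $\pi_X(\mathfrak{u'}^-)$ onto each constituent block $X$ (of type $A_{ij}$, $C_{ij}$, and in the odd-dimensional case $y_{mj}$) appearing in Propositions \ref{orthogonaleven}, \ref{symplectic} and \ref{orthogonalodd}. For every block on which this projection is nontrivial, record the associated standard or non-standard Hodge triple $H_X$; this gives a preliminary list of candidate irreducible Grassmannian factors.

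The crucial step is to adjust bases of the individual Hodge components $V^{n-i,i}$ so that the projections $\pi_X(\mathfrak{u'}^-)$ take a canonical rectangular form: by a singular-value-type normalisation, inside each $\C^{f_i}\otimes\C^{f_j}$ the image of $\pi_X$ is supported on a minimal product of sub-index sets $\tilde{\mathcal I}_i\times\tilde{\mathcal I}_j$. This yields sub-Hodge triples $\tilde H_X$ whose unipotent radicals jointly contain $\mathfrak{u'}^-$. Abelian-ness of $\mathfrak{u'}^-$, combined with the commutativity criterion of Lemma \ref{comutativitate}, forces the chosen sub-index sets to be pairwise disjoint across distinct $\tilde H_X$, so that by Lemma \ref{bracketcompact} all mutual Hodge brackets $[\tilde H_{X_1},\tilde H_{X_2}]$ vanish. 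By Theorem \ref{hodgetriple} the $\tilde H_X$ then assemble into a direct-sum embedding $\bigoplus_\alpha\mathfrak{sl}(h_\alpha,\C)\hookrightarrow\g$, with at most one symplectic summand coming from a non-standard type $(C)$ triple in the symplectic setting.

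Finally, Lemma \ref{ungrassmann} identifies each $\tilde H_X$ with an equivariantly embedded, closed Hermitian symmetric Grassmannian $D_\alpha\hookrightarrow D$ of type $(A)$ or $(C)$, and $\tilde D:=D_1\times\cdots\times D_s$ is the desired product. Since the holomorphic tangent space of $\tilde D$ at the base point contains $\mathfrak{u'}^-$, and since $\g'$ is generated as a Lie algebra by $\mathfrak{u'}^-$ together with its image under the Cartan involution and a compact Cartan subalgebra of $\g'$, the equivariant embedding $D'\hookrightarrow D$ factors through $\tilde D$ as a closed submanifold. I expect the main obstacle to be precisely the simultaneous normalisation in the previous step: basis changes must respect the Hodge grading (only changes within each individual $V^{n-i,i}$ are admissible), and one must verify that the resulting partition of each index set $\mathcal I_i$ is consistent across every block in which $i$ appears, so that the disjointness hypothesis of Lemma \ref{comutativitate} is met uniformly and the direct-sum structure of the ambient product is achieved.
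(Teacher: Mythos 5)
Your overall skeleton follows the paper's route (block decomposition of $\m^-$, Hodge triples and sub-Hodge triples, Lemmas \ref{comutativitate}, \ref{bracketcompact}, \ref{ungrassmann} and Theorem \ref{hodgetriple}), and your opening observation that $[\mathfrak{u'}^-,\mathfrak{u'}^-]\subset \m\cap\k=0$, so that $\mathfrak{u'}^-$ is abelian and $Z'$ is cominuscule, is a clean point the paper leaves implicit. But you have skipped the part of the argument that actually carries the content of the statement, namely that the irreducible factors can only be of type $(A)$ or $(C)$. The paper's proof consists almost entirely of this: a parity analysis (Claims 1--3) showing that for $p$ odd the indices $i,j$ with $i-j=p$ have opposite parity, so in even weight no block $C_{ij}$ with $n-i=j$ can occur inside $\m$ (hence no non-standard orthogonal triples, i.e.\ no factors of type $(B)$/$(D)$ such as $SO^*$- or $SO(2,k)$-type domains), while in odd weight exactly one anti-diagonal block survives and yields the single type $(C)$ factor; and in type $(B)$ a separate short-root computation shows that any triple involving the $y$-blocks hits $\k$ unless it degenerates to an embedded $\mathfrak{su}(2)$. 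Your proposal simply "records the associated standard or non-standard Hodge triple" for each block met by $\pi_X(\mathfrak{u'}^-)$ and then asserts the summands are $\mathfrak{sl}$'s plus at most one symplectic piece; without the parity and short-root arguments nothing excludes orthogonal-type factors or uncontrolled $y$-contributions, so the conclusion "type $(A)$ or $(C)$" is unproved.

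There is a second gap in the normalisation step. You claim that abelian-ness of $\mathfrak{u'}^-$ together with Lemma \ref{comutativitate} forces the sub-index sets supporting the projections $\pi_X(\mathfrak{u'}^-)$ to be pairwise disjoint, hence that the sub-Hodge triples have vanishing mutual brackets and assemble into a direct sum. Lemma \ref{comutativitate} is a statement about sets of roots (equivalently, about full blocks); for an arbitrary abelian subspace of $\m^-$ the block projections need not commute pairwise, because for $u=\sum_X u_X$, $v=\sum_X v_X$ the relation $[u,v]=0$ only says that the various block brackets, which may land in the same $\g^{-q,q}$-block of $\k$, cancel in total. So "abelian subspace $\Rightarrow$ commuting supports $\Rightarrow$ disjoint index sets" does not follow as stated, and the direct-sum (product) structure of $\tilde{\g}$, i.e.\ the very existence of $\tilde D=D_1\times\dots\times D_s$ containing $D'$, is not established by your argument. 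The paper avoids resting the product structure on this implication: it first pins down, by the case analysis above, which Hodge triples can occur at all inside $\m^-\oplus\g^{0,0}\oplus\m^+$, and only then passes to sub-Hodge triples; if you want to keep your projection-and-normalisation strategy you must either justify the disjointness using the full homomorphism $\g'\rightarrow\g$ (not just the subspace $\mathfrak{u'}^-$) or supply an argument handling possible cancellations across distinct $\k$-blocks.
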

\begin{proof}
\textit{Claim $1$:} If $n=2m$ and $\dim V=2l$ (type $(D)$), then all the Hodge triples characterising $\mathfrak{m}^-\oplus\g^{0,0}\oplus\mathfrak{m}^+$ are standard Hodge triples and thus correspond to closed, equivariantely embedded, irreducible Hermitian symmetric spaces of type $(A)$. 
\newline
\noindent
\newline
To see this observe that for fixed $p>0$, $p$ odd the statement is clear for the blocks characterising $m\cap A$. Assume $H_{ij}^c=(C_{ij}, (A_{n-i,n-i},A_{j,j}), B_{ji}) $ is a Hodge triple describing $(\mathfrak{m}\cap C) \oplus \g^{0,0}$, with $i\ge m$, $j\le m$. Then, we need to show that $n-i\ne j$. But this follows from the fact that, $i-j=p$ and thus $i$ and $j$ have different parities. Together with the fact that $n$ is even, this further implies that $n-i$ and $j$ have different parities and thus they cannot be equal.  
\newline
\noindent
\newline
\textit{Claim $2$:} If $n=2m+1$ and $\dim V=2l$ (type $(C)$), then all the Hodge triples characterising $\mathfrak{m}^-\oplus\g^{0,0}\oplus\mathfrak{m}^+$ are either standard Hodge triples or non-standard Hodge triples of type $(C)$ and thus correspond to closed, equivariantely embedded, irreducible Hermitian symmetric spaces of type $(A)$ and of type $(C)$.
\newline
\noindent
\newline
To see this first observe that for fixed $p>0$, $p$ odd it is clear that the Hodge triples characterising $(m \cap A) \oplus \g^{0,0}$ are standard. Assume $$H_{ij}^c=(C_{ij}, (A_{n-i,n-i},A_{j,j}), B_{ji}) $$ is a Hodge triple describing $(\mathfrak{m}\cap C) \oplus \g^{0,0}$ with $i\ge m$, $j\le m$. Now as before $i-j=p$ and thus $i$ and $j$ have different parities. However, since $n$ is odd, there exist an $i$ and a $j$ such that $n-i=j$ and $i-j=p$, namely $j=(n-p)/2$. Thus for any $p$ odd one obtains a non-standard Hodge triple of type $(C)$, with $C_{ij}$ one of the diagonal blocks in $C$.
\newline
\noindent
\newline
\textit{Claim $3$:} If $n=2m$ and $\dim V=2l+1$ (type $(B)$), then all the Hodge triples characterising $(\mathfrak{m}-({y\oplus x}))\oplus {\g^{0,0}}$ are standard Hodge triples and thus correspond to closed, equivariantely embedded, irreducible Hermitian symmetric spaces of type $(A)$. Furthermore, any non standard Hodge triple of type $B$ involving $\mathfrak{m} \cap y$ has non-empty intersection with $\k$, unless it corresponds to an equivariently embedded $\mathfrak{so}(3,\C)\cong \mathfrak{su}(2)$.
\newline
\noindent
\newline
To see this observe that as for type $(D)$ for fixed $p>0$, $p$ odd the statement is clear for the blocks characterising $m\cap A$. Assume $H_{ij}^c=(C_{ij}, (A_{n-i,n-i},A_{j,j}), B_{ji}) $ is a Hodge triple describing $(\mathfrak{m}\cap C) \oplus \g^{0,0}$, with $i\ge m$, $j\le m$. Then, we need to show that $n-i\ne j$. But this follows, as for type $(D)$, from the fact that, $i-j=p$ and thus $i$ and $j$ have different parities. Together with the fact that $n$ is even, this further implies that $n-i$ and $j$ have different parities and thus they cannot be equal.
\newline
\noindent
\newline
For fixed $p$, the block $\mathfrak{m}\cap y=y_{m, m-p}$ is characterized by the set of roots $\theta_{m,m-p}^d=\{-\varepsilon_{i_1}: i_1\in \mathcal{I}_{m-p}\}$. Thus for any $i_1\ne i_2$ in $\mathcal{I}_{m-p}$, $(-\varepsilon_{i_1})+(-\varepsilon_{i_2})=-\varepsilon_{i_1}-\varepsilon_{i_2}$ is a root in $\theta^c_{m+p,m-p}$. Thus any two distinct roots in $\theta^d_{m,m-p}$ add up to a root describing a root vector in $\mathfrak{k}$ and more precisely in $\g^{-2p,2p}$. Thus the only possible Hodge triple involving ${y}_{m,m-p}$ in $\mathfrak{m}$ is $[{y}_{m,i_1},a_{i_1,i_1},x_{i_1,m}]$ for some fixed $i_1\in \mathcal{I}_{m-p}$ and thus an equivariently embedded $\mathfrak{so}(3,\C)\cong \mathfrak{su}(2)$.  
\newline
\noindent
\newline
It thus follows that in each case, i.e. type $(B)$, $(C)$, or $(D)$, by passing to sub-Hodge triples whenever necessary, one obtains closed, equivariantely embedded Hermitian symmetric spaces which are products of irreducible Hermitian symmetric spaces of non-compact type $(A)$ or, for the odd weight case, of non-compact type $(C)$. Any equivariantely embedded Mumford-Tate domain must be a submanifold of such a Hermitian symmetric space $\tilde{D}$ and thus itself a Hermitian symmetric space. Furthermore, this means that $G'$ is a closed subgroup of some $\tilde{G}$ and thus $G_{0}'$ is a closed subgroup of $\tilde{G}_0$. Thus since $\tilde{G}_0$ acts properly on $\tilde{D}$ the action of $G_0'$ on $D'$ is also proper and thus if $z_0$ is the base point of the embedding of $D'$ in $\tilde{D}$, it follows that $D'=G_0'.z_0$ is closed in $\tilde{D}$.
\end{proof}
\noindent
\textbf{Remark1.} Observe that the assumption that the flag domains we consider are orbits of groups of Hodge type is essential. For example in the case of a Grassmannian of type $A$ associated to a standard Hodge triple, this assumption insures that the equivariantly embedded flag domain $\tilde{D}$ which contains the base point $z_0$ is an orbit of $SU(p,q)$ for some $p$ and $q$. It would be interesting to see what happens if one allows $\tilde{D}$ to be a flag domain of $SL(n,\R)$ or $SL(n,\mathbb{H})$, the other two real forms (up to conjugation) of $SL(n,\C)$.
\newline
\noindent
\newline
\textbf{Remark2.} As a future project it will be interesting to identify transversal complex manifolds to the base cycle which can  arize as closed complex submanifolds of Mumford-Tate domains, without requiring the more strong condition $2.$ considered in this paper as part of the definition of a flag domain of Hodge type. 
\begin{cor}\label{clasicalgroups}
The flag domains of Hodge type are Hermitian symmetric spaces of non-compact type whose irreducible factors are of type (A), (B), (C), and (D), i.e. no exceptional cases appear. Furthermore, the embeddings describing each of the irreducible factors are precisely the ones listed by Satake in \cite{Satake}.
\end{cor}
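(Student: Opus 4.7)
The plan is to deduce the corollary from Theorem \ref{teoremaprincipala} by decomposing $D'$ into irreducible factors and then analyzing how each such factor sits inside the ambient product of type $(A)$ and $(C)$ Hermitian symmetric spaces that the theorem provides. Concretely, Theorem \ref{teoremaprincipala} gives closed, equivariant inclusions
\[
D' \hookrightarrow \tilde{D}=D_1\times D_2\times\cdots\times D_s \hookrightarrow D,
\]
where each $D_i$ is an irreducible Hermitian symmetric space of non-compact type $(A)$ or, only in the odd-weight case, of type $(C)$. Since $D'$ is itself a Hermitian symmetric space of non-compact type, it decomposes as a product $D'=D'_1\times\cdots\times D'_r$ of irreducible Hermitian symmetric factors, and each $D'_k$ inherits a closed equivariant embedding into $\tilde{D}$.

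The first step is to project each irreducible factor $D'_k$ to the individual components $D_i$ of $\tilde{D}$. Equivariance together with irreducibility of $D'_k$ forces each such projection either to be constant or to be a closed equivariant holomorphic embedding of $D'_k$ into $D_i$. Hence the classification of the possible $D'_k$ reduces to the classification of irreducible Hermitian symmetric subdomains of Grassmannians of type $(A)$ and, in the odd-weight case, of symplectic Grassmannians of type $(C)$. In other words, each $D'_k=G'_{0,k}/K'_{0,k}$ arises from a closed embedding of a real simple Lie group $G'_{0,k}$ of Hodge type into a product of groups of the form $SU(p,q)$ and $Sp(m,\R)$, and the embedding sends a Cartan involution on $G'_{0,k}$ to a Cartan involution on the ambient group and intertwines the Hodge decompositions at the base point.

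The second step is to invoke Satake's classification \cite{Satake}. Satake's main theorem describes precisely all closed equivariant holomorphic embeddings of an irreducible Hermitian symmetric space of non-compact type into Siegel upper half-space (type $(C)$) respecting the Cartan involutions; by composing with the standard embedding of a type $(A)$ Grassmannian into an appropriate Siegel domain, the same list governs closed equivariant embeddings into type $(A)$ Grassmannians as well. Satake proved that only the classical types $(A)$, $(B)$, $(C)$, $(D)$ occur, and he listed all such embeddings explicitly; the exceptional Hermitian symmetric domains $EIII$ and $EVII$ admit no such embedding because the non-trivial irreducible representations of the corresponding real forms of $E_6$ and $E_7$ never carry a compatible Hodge-type polarization on their underlying vector space. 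Each irreducible factor $D'_k$ of $D'$ must therefore appear in Satake's list, and the embedding $D'_k\hookrightarrow D_i$ is precisely one of his listed embeddings.

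The main obstacle is ensuring that, after splitting $D'$ into irreducible factors and restricting the embedding, the projected embeddings $D'_k\hookrightarrow D_i$ truly satisfy the hypotheses needed to apply Satake's classification verbatim; this requires checking that condition $(2)$ of the definition of a flag domain of Hodge type is preserved under the projection to each factor, which follows because the tangent space of a product decomposes diagonally and the Cartan involution $\theta$ acts block-diagonally with respect to this decomposition. Once this compatibility is verified, the corollary is immediate: the list of classical irreducible factors together with the fact that the irreducible embeddings are exactly Satake's embeddings follows directly from \cite{Satake}.
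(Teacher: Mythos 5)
Your proposal follows essentially the same route as the paper: Theorem \ref{teoremaprincipala} reduces the classification problem (*) to classifying embeddings of a Hermitian symmetric space of Hodge type into a Hermitian symmetric space whose irreducible factors are of type (A) or (C), which is exactly the setting of Satake's classification, and the conclusion is then read off from \cite{Satake}. You simply spell out details (splitting $D'$ into irreducible factors, projecting to the components, checking condition (2) survives the projection) that the paper's two-sentence proof leaves implicit.
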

\begin{proof}
From \textit{Theorem} \ref{teoremaprincipala} it follows that the classification problem (*) of classifying embeddings $f:D'\rightarrow D$ from an arbitrary flag domain of Hodge type to an arbitrary period domain $D$ is reduced to the classification problem (*) of classifying embeddings $f:D'\rightarrow D''$ from a Hermitian symmetric space of Hodge type to $D''$ a Hermitian symmetric space whose irreducible factors are of type $(A)$ or type $(C)$. But these are precisely the conditions required in the classification problem considered by Satake in \cite{Satake}.
\end{proof}
\noindent
The next proposition restates the general results  concerning the $\g^{-p,p}$-block characterisation, for the subspace $\g^{-1,1}$ to explicitly point out the identification of each of its blocks with a tangent space to a certain Grassmannian, i.e. with a unipotent radical, and recall the explicit description of the Harisch-Chandra coordinates characterising each block.
\begin{prop}\label{mainproposition}
The subspace $\g^{-1,1}$ can be decomposed as the following direct sum of vector subspace: $$\g^{-1,1}_B=\mathfrak{u}_{1,0}^B+\mathfrak{u}^B_{2,1}+\dots +\mathfrak{u}_{m,m-1}^B+\mathfrak{u}_{m,m-1}^{B,c}+\mathfrak{y}_{m,m-1},$$
$$\g^{-1,1}_C=\mathfrak{u}_{1,0}^C+\mathfrak{u}^C_{2,1}+\dots +\mathfrak{u}_{m,m-1}^C+\mathfrak{u}_{m+1,m}^{C,c},$$
$$\g^{-1,1}_D=\mathfrak{u}_{1,0}^D+\mathfrak{u}^D_{2,1}+\dots +\mathfrak{u}_{m,m-1}^D+\mathfrak{u}_{m,m-1}^{D,c}.$$
Each member $\mathfrak{u}_{k+1,k}^{(B,C,D)}$, for all $0\le k \le m-1$, $\mathfrak{u}_{m-1,m}^{(B,D),c}$ and $\mathfrak{u}_{m+1,m}^{(C,c)}$ respectively, are expressed in terms of Harisch-Chandra coordinates by the set of root vectors $\theta_{k+1,k}^{(B,C,D)}$, for all $0\le k \le m-1$, $\theta_{m,m-1}^{(B,D),c}$ and $\theta_{m-1,m}^{C,c}$ respectively and can be naturally identified with the tangent space at a base point of a Grassmannian of type A, for the first two type of subspaces, and a Grassmannian of type C for the last one, respectively. The subspace $\mathfrak{y}_{m,m-1}$ corresponds to the set of roots $\theta_{m,m-1}^{B,d}$. 

\end{prop}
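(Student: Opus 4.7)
The plan is to apply the block decompositions of $\g^{-p,p}$ already established in Propositions~\ref{orthogonaleven}, \ref{symplectic} and \ref{orthogonalodd} in the special case $p=1$, and then identify each surviving block with a unipotent radical of a Hodge triple whose associated Grassmannian submanifold is furnished by Theorem~\ref{hodgetriple} together with Lemma~\ref{ungrassmann}.

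First I would instantiate each of the three propositions at $p=1$. In each case the condition $i-j=1$ forces $i$ and $j$ to have opposite parity, so the summands correspond precisely to pairs $(k+1,k)$ with $0\le k\le m-1$, plus at most one extra summand when the pair straddles the middle index~$m$. For type~(D), after using the symmetry $A_{j+1,j}\leftrightarrow(-A^{t}_{j+1,j})$ one retains the $A$-blocks $A_{k+1,k}$, for $0\le k\le m-1$, and a single $C$-block $C_{m,m-1}$ arising from the $j=m-1$, $j+1=m$ summand of Proposition~\ref{orthogonaleven}; parity of $n=2m$ rules out the non-standard case because $n-i=j$ would force $i$ and $j$ to have equal parity. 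For type~(C) the same enumeration applies via Proposition~\ref{symplectic}, except that now $n=2m+1$ is odd, so the equation $n-i=j$ with $i-j=1$ has the solution $i=m+1$, $j=m$, producing a non-standard Hodge triple of type~(C) and hence the summand $\mathfrak{u}_{m+1,m}^{C,c}$. For type~(B) Proposition~\ref{orthogonalodd} gives, in addition to the $A$- and $C$-blocks as in type~(D), the $y$-block $y_{m,m-1}$ coming from the $j+1=m$ clause of that proposition; this yields the extra summand $\mathfrak{y}_{m,m-1}$.

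Next I would identify each block with the negative unipotent radical of a (standard or non-standard) Hodge triple. The $A$-block $A_{k+1,k}$ is the negative unipotent radical of the standard Hodge triple $H_{k+1,k}=[A_{k+1,k},(A_{k+1,k+1},A_{k,k}),A_{k,k+1}]$; by Theorem~\ref{hodgetriple} this triple is equivalent to an embedding of $\mathfrak{sl}(h^{n-k-1,k+1}+h^{n-k,k},\C)$ in $\g$, and by Lemma~\ref{ungrassmann} this produces a closed, equivariantly embedded Grassmannian of type~(A). The $C$-blocks $C_{m,m-1}$ in type~(D) and type~(B) are likewise unipotent radicals of standard Hodge triples $H_{m,m-1}^{c}$ and hence identify with Grassmannians of type~(A). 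The non-standard triple giving $\mathfrak{u}_{m+1,m}^{C,c}$ in type~(C) corresponds instead to an embedded $\mathfrak{sp}(2h^{m+1,m},\C)$ and hence a Grassmannian of type~(C). The explicit Harish-Chandra coordinates in each case are given by the root subsets $\theta_{k+1,k}^{(B,C,D)}$, $\theta_{m,m-1}^{(B,D),c}$, $\theta_{m+1,m}^{C,c}$ and $\theta_{m,m-1}^{B,d}$ introduced in Definition~\ref{Hodgetriple}, which by Lemma~\ref{primalema} are commutative sets of roots, confirming that each piece is an abelian unipotent subspace.

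Finally, I would check that the proposed sum is direct and exhausts $\g^{-1,1}$. Directness follows because the index pairs $(k+1,k)$ are distinct and the associated root subsets are pairwise disjoint; exhaustion is immediate from Propositions~\ref{orthogonaleven}, \ref{symplectic}, \ref{orthogonalodd} applied at $p=1$ once the symmetry identifications $A_{ij}\leftrightarrow(-A^{t}_{n-j,n-i})$ (respectively $A_{ij}\leftrightarrow A^{t}_{n-j,n-i}$ in type~(C)) and $y_{ij}\leftrightarrow(-y^{t}_{n-j,n-i})$ have been made, so that each decomposition summand is counted once. The main obstacle is not conceptual but bookkeeping: carefully tracking the three types (B), (C), (D) in parallel, keeping the parity argument straight (it is parity of $n$ that determines whether the middle summand produces a standard or non-standard Hodge triple), and making sure the odd-dimensional $y$-block is correctly assigned to its own subspace $\mathfrak{y}_{m,m-1}$ rather than merged with a Hodge-triple summand.
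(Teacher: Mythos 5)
Your proposal is correct and follows essentially the same route as the paper's own (much terser) proof: specialize the block decompositions of Propositions~\ref{orthogonaleven}, \ref{symplectic}, \ref{orthogonalodd} to $p=1$, and reinterpret each block as the unipotent radical of a standard or non-standard Hodge triple via Theorem~\ref{hodgetriple} and Lemma~\ref{ungrassmann}, with the middle $C$-block giving the antidiagonal $\mathfrak{sl}$ (type (A)) embedding in types (B), (D) and the $\mathfrak{sp}$ (type (C)) embedding in odd weight, and the $y$-block kept as a separate summand. Your parity argument and the identification of the type (C) piece with $\mathfrak{sp}(2h^{m+1,m},\C)$ are consistent with Theorem~\ref{hodgetriple} (indeed slightly more carefully stated than the paper's wording), so there is no gap.
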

\begin{proof}
To better suggest that the blocks $A_{j+1,j}$ can be identified with the unipotent radical (or opposite unipotent radical) of the maximal parabolic subgroup of $sl(h^{n-j,j}+h^{n-(j+1),j+1}, \C)$ as defined in section \ref{liestructure}, associated to the diagonal  embedding given by the Hodge triple $H_{j,j+1}$ as stated in proposition \ref{hodgetriple}, we change the notation to $\mathfrak{u}_{j+1,j}$ and use the upper scripts $B$, $C$, $D$ to indicate the Lie group type. 
\newline
\noindent
\newline
The notation $\mathfrak{u}_{m,m-1}^{(B,D),c}$ and $\mathfrak{u}_{m+1,m}^{C,c}$ respectively, replaces the notation $C_{m,m-1}$,  and $C_{m+1,m}$ respectively. This corresponds to the antidiagonal embedding of $$sl(h^{m+1,m-1}+h^{m,m},\C)$$ in $\g$ given by the Hodge triple $H_{m-1,m}$ and to the natural embedding of $\mathfrak{sp}(2h^{m,m},\C)$ in $\g$ given by the Hodge triple $H_{m+1,m}^c$ respectively. 
\newline
\noindent
\newline
Any embedding of $\mathfrak{so}(h,\C)$ inside $\g$ involving the subset $\mathfrak{y}_{m,m-1}$ would have non-empty intersection with $\g$ outside the subspace $\g^{-p,p}$, p odd, i.e. it will contain compact factors, unless $h=3$. 
\end{proof}

\begin{cor}\label{dimensionformula}
The dimension of $\g^{-1,1}$ is given in each case by: $$\dim\g^{-1,1}_B=f_0f_1+f_1f_2+\dots + f_{m-1}(f_m-1)/2+f_{m-1}(f_m-1)/2+f_{m-1}, $$ $$\dim\g^{-1,1}_C=f_0f_1+\dots f_{m-1}f_m+f_m(f_m+1)/2,$$ $$\dim\g^{-1,1}_D=f_0f_1+f_1f_2\dots +f_{m-1}f_m/2+f_{m-1}f_m/2,$$
where $f_kf_{k+1}$ for all $0\le k \le m-2$ and $f_{m-1}(f_m-1)/2$ is the dimension of the naturally embedded Grassmanian of type (A), and $f_{m}(f_{m+1}+1)/2$ is the dimension of the naturally embedded Grassmanian of type (C).  
\end{cor}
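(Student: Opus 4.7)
The plan is to apply Proposition \ref{mainproposition} directly: it already decomposes $\g^{-1,1}$ as a direct sum of unipotent pieces, each canonically identified with the tangent space at a base point to a Grassmannian of type (A) or (C). So the proof reduces to summing the complex dimensions of those Grassmannians, plus (in type (B)) the contribution of the short-root subspace $\mathfrak{y}_{m,m-1}$.

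First I would treat the generic pieces uniformly in the three types. For $0 \le k \le m-2$, the piece $\mathfrak{u}_{k+1,k}^{(B,C,D)}$ is, by Proposition \ref{mainproposition} together with Theorem \ref{hodgetriple}, the negative unipotent radical of the maximal parabolic of the diagonally embedded $\mathfrak{sl}(f_k+f_{k+1},\C)$ associated to the standard Hodge triple $H_{k,k+1}$; equivalently it is the holomorphic tangent space at $z_0$ to the Grassmannian of $f_k$-planes in $\C^{f_k+f_{k+1}}$, so it has complex dimension $f_k f_{k+1}$. Summing over $k=0,\dots,m-2$ produces the common initial terms $f_0 f_1 + f_1 f_2 + \dots + f_{m-2}f_{m-1}$ in each of the three formulas.

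Next I would handle the pieces that touch the middle Hodge component $V^{m,m}$, where the combinatorics splits according to type. In type (D) with $n=2m$ and $f_m$ even, $|\mathcal{I}_m^1|=|\mathcal{I}_m^2|=f_m/2$, so both $\mathfrak{u}_{m,m-1}^D$ (an $A$-block of size $f_{m-1}\times f_m/2$) and $\mathfrak{u}_{m,m-1}^{D,c}$ (a $C$-block of the same size, coming from the antidiagonal embedding of $H_{m-1,m}^c$) are tangent spaces to Grassmannians of $f_{m-1}$-planes in $\C^{f_{m-1}+f_m/2}$, each contributing $f_{m-1}f_m/2$. In type (C) with $n=2m+1$, the piece $\mathfrak{u}_{m,m-1}^C$ is a type (A) Grassmannian of $f_{m-1}$-planes in $\C^{f_{m-1}+f_m}$ giving $f_{m-1}f_m$, while the middle block $\mathfrak{u}_{m+1,m}^{C,c}$, corresponding to the non-standard Hodge triple $H_{m+1,m}^c$, is identified via Theorem \ref{hodgetriple} with the negative unipotent radical of the maximally embedded $\mathfrak{sp}(2f_m,\C)$, i.e.\ with the type (C) symplectic Grassmannian of complex dimension $f_m(f_m+1)/2$. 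In type (B) with $n=2m$ and $f_m$ odd, $|\mathcal{I}_m^1|=|\mathcal{I}_m^2|=(f_m-1)/2$, so the $A$- and $C$-blocks each contribute $f_{m-1}(f_m-1)/2$; finally $\mathfrak{y}_{m,m-1}$ is spanned by the short-root vectors with roots in $\{-\varepsilon_i : i\in \mathcal{I}_{m-1}\}$, giving an additional $f_{m-1}$.

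Adding the contributions block by block yields the three displayed formulas. The only real content beyond bookkeeping is the identification of each summand with a specific Grassmannian, which is exactly what Proposition \ref{mainproposition} and Theorem \ref{hodgetriple} provide; the mildly delicate point to double-check is the dimension of the symplectic Grassmannian piece in type (C), where one must verify that $H_{m+1,m}^c$ really corresponds to the maximal Lagrangian Grassmannian of $\mathfrak{sp}(2f_m,\C)$ (dimension $f_m(f_m+1)/2$) rather than to some smaller symplectic flag variety. This follows because the semisimple part $A_{m,m}$ of $H_{m+1,m}^c$ exhausts $\g^{0,0}$ restricted to $V^{m,m}$, so the associated parabolic is the Siegel parabolic. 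Once this is confirmed the three formulas are immediate.
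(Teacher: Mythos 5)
Your proposal is correct and follows exactly the route the paper intends: the corollary is stated as an immediate consequence of Proposition \ref{mainproposition}, and your block-by-block dimension count (the type (A) pieces $f_kf_{k+1}$, the halved middle blocks $f_{m-1}f_m/2$ resp.\ $f_{m-1}(f_m-1)/2$ in types (D) and (B), the Lagrangian piece $f_m(f_m+1)/2$ in type (C), and the short-root term $f_{m-1}$ in type (B)) is precisely the implicit argument. Your extra verification that $H^c_{m+1,m}$ gives the Siegel parabolic, hence the full Lagrangian Grassmannian, is a sound and welcome detail, consistent with Theorem \ref{hodgetriple}.
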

\noindent
In general, one obtains as a corollary of Proposition \ref{orthogonaleven} 
 the following dimension formula, for any $\g^{-p,p}$-piece, with $p$ odd, in the even weight case, even dimension (type D) and one can write similar formulas in general.
\begin{cor}\label{generaldimensionformula}
The dimension of $\g^{-p,p}$ for $0<p<m$ is given by: $$\sum_{j=0}^{m-p-1} f_{j+p},f_j+(1/2)f_mf_{m-p}+(1/2)f_mf_{m-p}+(1/2)\sum_{j=m-p+1}^{m-1} f_{n-j-p}f_j, $$
and for $p>m$ it is given by $$\dim \g^{-p,p}=(1/2)\sum_{j=0}^{n-p}f_{n-p-j}f_j.$$
\end{cor}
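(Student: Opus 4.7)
The plan is to read off the dimension of $\g^{-p,p}$ directly from the block decomposition supplied by \textit{Proposition} \ref{orthogonaleven}, summing the dimension of each independent block while keeping track of the identifications $A_{ij}\sim -A^t_{n-j,n-i}$ and $C_{ij}\sim C_{n-j,n-i}$ that were built into the block notation of \textit{Section} \ref{combinatorics}. The proof is essentially a bookkeeping exercise, with the hypothesis that $p$ is odd entering at exactly one place: it prevents a self-paired $C$-block that would otherwise require a separate antisymmetric adjustment.

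First I would fix the size conventions recalled in \textit{Section} \ref{combinatorics}: a block $A_{ij}$, $B_{ij}$, or $C_{ij}$ has dimension $f_i f_j$ whenever neither index equals $m$, while any block with a row or column index equal to $m$ has that dimension halved, because $V^{m,m}$ is split as $\mathcal{I}_m^1\cup\mathcal{I}_m^2$ of size $f_m/2$ each. For $0<p<m$ I would then distribute the right-hand side of \textit{Proposition} \ref{orthogonaleven} into the four summands of the claimed formula. The $A$-block contribution $\bigoplus_{0\le j<j+p\le m} A_{j+p,j}$ yields the first two terms: for $j=0,\dots,m-p-1$ each block $A_{j+p,j}$ has dimension $f_{j+p}f_j$, producing the first sum; and the boundary block $A_{m,m-p}$, of reduced size $(f_m/2)\times f_{m-p}$, contributes the first $(1/2)f_m f_{m-p}$. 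The $(-A^t)$-summand of the proposition introduces no new dimensions, because each of its blocks has already been identified with an $A_{j+p,j}$ above. The $C$-block contribution $\bigoplus_{0\le j\le m,\,j+p\ge m} C_{j+p,j}$ yields the last two terms: the boundary block $C_{m,m-p}$, identified with $C_{m+p,m}$, contributes the second $(1/2)f_m f_{m-p}$; and for $j=m-p+1,\dots,m-1$ the blocks $C_{j+p,j}$ are matched in couples $j\leftrightarrow 2m-j-p$ inside the index range, so the naive sum counts each independent block twice and the explicit $1/2$ cancels the overcounting.

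For $p>m$ the first and third summands of \textit{Proposition} \ref{orthogonaleven} are empty, and only $\bigoplus_{0\le j\le n-p} C_{j+p,j}$ survives; no index among these equals $m$, so every block has dimension $f_{n-j-p}f_j$, and the pairing $j\leftrightarrow n-p-j$ gives the $1/2$ in front of $\sum_{j=0}^{n-p} f_{n-p-j}f_j$. This is where the odd-$p$ hypothesis is essential: the self-paired case requires $j=(n-p)/2$, which is non-integer when $n$ is even and $p$ is odd; if such a fixed point existed, the antisymmetry $C+C^t=0$ would make the corresponding block antisymmetric and force a correction of the form $f_j(f_j\mp 1)/2$ in place of $f_j^2/2$, breaking the clean formula.

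The main obstacle is therefore purely combinatorial: keeping straight which blocks touch the special index $m$ (picking up the $f_m/2$ reduction), which identifications $A_{ij}\sim -A^t_{n-j,n-i}$ or $C_{ij}\sim C_{n-j,n-i}$ are operative, and how these interact with the endomorphism pairing $j\leftrightarrow n-j-p$ that records the skew-symmetry of $\g$ with respect to $Q$. Once the case distinctions are tabulated cleanly, the identity reduces to a direct sum-of-block-sizes computation. The analogous dimension formulas in types $(B)$ and $(C)$ can be obtained by the same procedure using \textit{Proposition} \ref{orthogonalodd} and \textit{Proposition} \ref{symplectic}, with the additional $y$- and $x$-blocks contributing in type $(B)$.
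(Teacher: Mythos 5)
Your bookkeeping is correct and is exactly the derivation the paper intends: the corollary is stated as an immediate consequence of Proposition \ref{orthogonaleven}, with the dimension obtained by summing block sizes, halving for blocks touching the index $m$, and counting each pair identified under $A_{ij}\sim -A^t_{n-j,n-i}$, $C_{ij}\sim C_{n-j,n-i}$ once, with oddness of $p$ ruling out a self-paired antisymmetric $C$-block. (The only nit is that the parity of $p$ is used in both the middle $C$-sum for $0<p<m$ and the $p>m$ sum, not in just one place, but the argument is identical in both.)
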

\noindent
\textbf{Abelian subspaces of $\g^{-1,1}$.}
\newline
\noindent
\newline
Next we give a procedure of constructing abelian subspaces of $\g^{-1,1}$ using the described properties of Hodge triples and Hodge brackets and show that each abelian subspace of $\g^{-1,1}$ is contained in one of the abelian subspaces prescribed by this algorithm, i.e. a procedure for describing sub-Hodge triples.
\begin{defi}
For $n=2m+1$ and for each $1\le k\le m$, call any partition of $\mathcal{I}_k=\mathcal{I}_{k}^1\cup \mathcal{I}_k^2$, a \textbf{Hodge path} and the sequence $(j_0,j_1,\dots, j_m)$, where $j_0:=0$, $\mathcal{I}_0^1=\mathcal{I}_0^2=\mathcal{I}_0$, $0\le j_k:=|\mathcal{I}_k^1|\le f_k$ and $j_{k+1}=0$ whenever $h^{n-k,k}-j_k=0$, a \textbf{Hodge sequence}. If $n=2m$ consider for type $(D)$ the same together with the extra requirement that $\mathcal{I}_{m-1}=\mathcal{I}_{m-1}^1\cup \mathcal{I}_{m-1}^2\cup \mathcal{I}_{m-1}^3$ and $\mathcal{I}_{m}=\mathcal{I}_{m}^1\cup \mathcal{I}_{m}^2$ and for type $B$ for any choice of element in $y_{m,m-1}$ one excludes the corresponding index in $\mathcal{I}_{m-1}$.
\end{defi}
\noindent
 $\bullet$ \textbf{Description of algorithm:} Following a Hodge path construct a subspace $\g^{-1,1}_{\mathfrak{a}}$ of $\g^{-1,1}$, by letting first $\mathfrak{u}_{j_1,j_0}$ be the subspace of $\mathfrak{u}_{1,0}$ composed of rows corresponding to indices in $\mathcal{I}_1^1$. Then construct inductively $\mathfrak{u}_{j_{k+1},j_k}$ as the subspace of $\mathfrak{u}_{k+1,k}$ of columns corresponding to indices in $\mathcal{I}_k^2$ and rows corresponding to indices in $\mathcal{I}_{k+1}^1$, for all $1\le k \le m-1$. Finally set $\mathfrak{u}^c_{j_{m},j_{m-1}}$ to be the subspace of $\mathfrak{u}_{m,m-1}^c$ of rows corresponding to $i+l$, for all $i \in \mathcal{I}_{m}^2$ and columns corresponding to $\mathcal{I}_{m-1}^3$. For $\dim V=2m+1$, if one chooses a root vector of $\theta_{m,m-1}^{B,d}$, then the root vectors in $\mathfrak{u}_{m-1,m}$, $\mathfrak{u}_{m,m-1}^c$ and $\mathfrak{u}_{m-2,m-1}$ which have one index the same as the root vector of $\theta_{m,m-1}^{B,d}$ are eliminted and then one starts the algorithm. 
\newline
\noindent
\newline
Reasoning as for \textit{Theorem} \ref{hodgetriple}, one sees that each member $\mathfrak{u}_{j_{k+1},j_{k}}$ is expressed in terms of Harish-Chandra coordinates  by root vectors corresponding to $$\theta_{j_{k+1},j_{k}}=\{-\varepsilon_i+\varepsilon_j:\, i \in \mathcal{I}_{k+1}^1, \, j\in \mathcal{I}_{k}^2\},$$
for all $1\le k \le {m-1}$ and can be naturally identified with a Grassmannian of type A, of dimension $(f_k-j_k)f_{k+1}$. Likewise, $u_{j_{m},j_{m-1}}^c$ for $n=2m$ is expressed in terms of Harisch-Chandra coordinates by $$\theta_{j_{m},j_{m-1}}^c=\{-\varepsilon_i-\varepsilon_j:\, i \in \mathcal{I}_{m-1}^3,\, j\in \mathcal{I}_m^2\}$$ and naturally identified with a Grassmannian of type A of dimension $(f_{m-1}-j_{m-1}-j_{m-2})(f_m-j_m)$. For $n=2m+1$,  $u_{j_{m},j_{m-1}}^c$ is expressed in terms of Harisch-Chandra coordinates by $$\theta_{j_{m},j_{m-1}}^c=\{-\varepsilon_i-\varepsilon_j:\, i, j\in \mathcal{I}_m^2\}\cup\{-\varepsilon_i:\,i\in \mathcal{I}_{m}^2\}$$ and naturally identified with a Grassmannian of type C of dimension $(f_m-j_m)(f_m-j_m+1)/2$.  Thus, as a special case of \textit{Theorem} \ref{teoremaprincipala}, we have:
\begin{thm}\label{maintheorem}
Let $\mathfrak{a}$ be an abelian subalgebra of $\g^{-1,1}$. Then there exist a Hodge path and a Hodge sequence depending on $\mathfrak{a}$ such that $\mathfrak{a}$ is a Lie subalgebra of $\g^{-1,1}_{\mathfrak{a}}$. Furthermore, the subspace $\g^{-1,1}_{\mathfrak{a}}$ can be naturally identified with the tangent space at a base point in $D$ to an equivariently-embedded product of Grassmannian submanifolds $G_1\times G_2\times \dots \times G_s$. For even weight each $G_i$ is of type (A), whereas for odd weight each $G_i$ is of type (A) with one possible member of type (C).
\end{thm}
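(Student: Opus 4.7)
The plan is to deduce Theorem \ref{maintheorem} as a concrete instance of Theorem \ref{teoremaprincipala} specialized to $\g^{-1,1}$, extracting the combinatorial data (Hodge path and Hodge sequence) directly from $\mathfrak{a}$. First I would invoke the block decomposition from Proposition \ref{mainproposition}, writing $\g^{-1,1} = \bigoplus_{k=0}^{m-1}\mathfrak{u}_{k+1,k} \oplus \mathfrak{u}^{c}_{*,*}$ (plus $\mathfrak{y}_{m,m-1}$ in type $(B)$), and let $\pi_k$ denote the projection onto $\mathfrak{u}_{k+1,k}$.

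Given an abelian $\mathfrak{a}\subset \g^{-1,1}$, I would set
$$\mathcal{I}_k^{1}:=\{\,i\in\mathcal{I}_k:\text{some }X\in\pi_{k-1}(\mathfrak{a})\text{ has a nonzero entry in row }i\,\},\qquad \mathcal{I}_k^{2}:=\mathcal{I}_k\setminus\mathcal{I}_k^{1},$$
and put $j_k:=|\mathcal{I}_k^{1}|$. The key observation is that the abelian condition forces every column-index occurring in $\pi_k(\mathfrak{a})$ to lie in $\mathcal{I}_k^{2}$: for $X\in\pi_{k-1}(\mathfrak{a})$ and $Y\in\pi_k(\mathfrak{a})$ the $\mathfrak{u}_{k+1,k-1}$-component of $[X,Y]$ equals the matrix product $YX$, with $(p,j)$-entry $\sum_q y_{p,q}x_{q,j}$, and a nonzero column-index of $Y$ coinciding with a nonzero row-index of $X$ would produce a nonzero bracket. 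The same argument applied to the $C$-block handles the interaction with $\mathfrak{u}^{c}_{*,*}$, using the bracket relations recorded in the paragraph on Hodge brackets. Consequently $\mathfrak{a}$ sits inside the subspace whose $\mathfrak{u}_{k+1,k}$-component has rows in $\mathcal{I}_{k+1}^{1}$ and columns in $\mathcal{I}_k^{2}$, which is exactly $\g^{-1,1}_{\mathfrak{a}}$ as defined by the algorithm.

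Next I would verify that $\g^{-1,1}_{\mathfrak{a}}$ is abelian and identify it geometrically. Abelianness follows by construction: the row-indices $\mathcal{I}_{k+1}^{1}$ of $\mathfrak{u}_{j_{k+1},j_k}$ are disjoint from the column-indices $\mathcal{I}_{k+1}^{2}$ of $\mathfrak{u}_{j_{k+2},j_{k+1}}$, so Lemma \ref{primalema} and Lemma \ref{comutativitate} (together with their $B$-/$C$-block analogues) give vanishing of all relevant brackets; the remaining cross-brackets vanish for trivial index-disjointness reasons. For the geometric identification, each $\mathfrak{u}_{j_{k+1},j_k}$ is precisely the negative unipotent radical of a sub-Hodge triple $\tilde{H}_{k,k+1}$ built from $(\mathcal{I}_k^{2},\mathcal{I}_{k+1}^{1})$; by Theorem \ref{hodgetriple} and Lemma \ref{ungrassmann} this corresponds to a closed equivariantly embedded Grassmannian of type $(A)$ inside $D$, while $\mathfrak{u}^{c}_{j_m,j_{m-1}}$ yields one of type $(A)$ in the even-weight case and one of type $(C)$ in the odd-weight case. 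Since the Grassmannians $G_1,\dots,G_s$ live in pairwise disjoint ranges of basis indices, their product embeds as a closed submanifold of $D$ whose tangent space at the base point is $\g^{-1,1}_{\mathfrak{a}}$.

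The main obstacle I anticipate is not the combinatorial extraction, which is essentially forced, but the type $(B)$ case involving $\mathfrak{y}_{m,m-1}$: choosing a root vector $-\varepsilon_i$ there interacts simultaneously with $\mathfrak{u}_{m-1,m}$, $\mathfrak{u}^{c}_{m,m-1}$ and $\mathfrak{u}_{m-2,m-1}$, because $(-\varepsilon_i)+(-\varepsilon_j)$ and $(-\varepsilon_i)+(-\varepsilon_i+\varepsilon_j)$ can both be roots. This is exactly the reason the algorithm pre-excludes the corresponding index from $\mathcal{I}_{m-1}$ before proceeding, and the verification that this amputation restores commutativity without sacrificing abelian subspaces must be done by hand, invoking the final clause of Proposition \ref{mainproposition} on the $\mathfrak{so}(3,\C)$ exception noted in the proof of Theorem \ref{teoremaprincipala}.
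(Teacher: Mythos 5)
Your outline follows the paper's own route: extract a Hodge path and Hodge sequence from $\mathfrak{a}$, let $\g^{-1,1}_{\mathfrak{a}}$ be the output of the algorithm, identify each block of $\g^{-1,1}_{\mathfrak{a}}$ with the unipotent radical of a (sub-)Hodge triple and hence, via Theorem \ref{hodgetriple} and Lemma \ref{ungrassmann}, with the tangent space of an equivariantly embedded Grassmannian of type $(A)$ (with the one type $(C)$, resp.\ $\mathfrak{y}$, exception in odd weight resp.\ type $(B)$, which you correctly flag). The genuine gap is in the pivotal containment step $\mathfrak{a}\subset\g^{-1,1}_{\mathfrak{a}}$. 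You take $X\in\pi_{k-1}(\mathfrak{a})$ and $Y\in\pi_{k}(\mathfrak{a})$ and argue that an overlap between the row support of $X$ and the column support of $Y$ would ``produce a nonzero bracket.'' But $X$ and $Y$ are projections of possibly different elements $a,a'\in\mathfrak{a}$, and nothing forces $[X,Y]=0$: what must vanish is $[a,a']$, whose component in $\mathfrak{u}_{k+1,k-1}$ is the antisymmetrized combination $\pi_{k}(a)\pi_{k-1}(a')-\pi_{k}(a')\pi_{k-1}(a)$, not the single product $YX$. Moreover even a single product can vanish despite overlapping supports, by cancellation over the middle index (e.g.\ $Y=(1,\,-1)$ and $X=(1,\,1)^{t}$ give $YX=0$). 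So the support-extraction lemma, as stated for an arbitrary abelian subalgebra, is false.

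The step becomes correct exactly under the hypothesis, implicit in the paper, that $\mathfrak{a}$ is spanned by root vectors, i.e.\ corresponds to a commutative set of roots: then $\pi_{k}(\mathfrak{a})\subset\mathfrak{a}$, each block component of $\mathfrak{a}$ is again a span of root vectors of $\mathfrak{a}$, and an overlap of indices produces two roots of $\mathfrak{a}$ summing to a root, contradicting commutativity --- this is precisely the root arithmetic of Lemma \ref{primalema} and Lemma \ref{comutativitate} on which the paper's argument runs. Without such a reduction your extraction can fail outright: for $\alpha=-\varepsilon_{j_1}+\varepsilon_{i}\in\theta_{k,k-1}$ and $\beta=-\varepsilon_{i}+\varepsilon_{j_2}\in\theta_{k+1,k}$ (same middle index $i\in\mathcal{I}_k$), the line $\C\,(E_{\alpha}+E_{\beta})$ is an abelian subalgebra of $\g^{-1,1}$ whose row support and column support at level $k$ coincide, so no partition $\mathcal{I}_k=\mathcal{I}_k^{1}\cup\mathcal{I}_k^{2}$ is compatible with it and the space $\g^{-1,1}_{\mathfrak{a}}$ your recipe produces does not contain it. You therefore need either to restrict to (or reduce to, e.g.\ by degenerating to initial root vectors under the torus action) abelian subspaces spanned by root vectors, as the paper's proof tacitly does, or to give a genuinely different argument for arbitrary abelian subalgebras; the proposal as written does neither.
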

\begin{cor}
Consider the following dimension function $f:[0,d_1]\times[0,d_2]\times\dots\times[0,d_{m-1}]$, defined by $$f(l_1,l_2,\dots,l_{m-1})=d_0l_1+d_1l_2+\dots d_{m-1}d_m-l_1l_2-l_2l_3-\dots -l_{m-1}d_m.$$
The dimension of a maximal abelian subalgebra is the maximum value of the function $f$.
\end{cor}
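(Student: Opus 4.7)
The plan is to use \textit{Theorem} \ref{maintheorem} to translate the problem about abelian subalgebras into a combinatorial optimisation over Hodge sequences, then compute the dimension of each maximal subspace $\g^{-1,1}_{\mathfrak a}$ as a telescoping sum and match it with $f$.

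Concretely, I would first invoke \textit{Theorem} \ref{maintheorem} to conclude that any abelian subalgebra $\mathfrak{a}\subset \g^{-1,1}$ is contained in a subspace $\g^{-1,1}_{\mathfrak a}$ built from a Hodge path with Hodge sequence $(j_0,j_1,\dots,j_m)$, where $j_0=0$ and (for the purposes of maximising dimension) $j_m=d_m$, so the entire $c$-piece is absorbed into a genuine Grassmannian block rather than a quadratic $(f_m-j_m)(f_m-j_m+1)/2$-type block whose contribution is strictly smaller than $j_m(f_{m-1}-j_{m-1})$ at the maximum. This reduces the question to computing $\dim \g^{-1,1}_{\mathfrak a}$ as a function of the $m-1$ free parameters $l_k:=j_k\in[0,d_k]$ for $1\le k\le m-1$, since $l_0=0$ and $l_m=d_m$ are fixed.

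Next I would read off the dimension of each block: by the discussion following \textit{Theorem} \ref{maintheorem}, $\mathfrak{u}_{j_{k+1},j_k}$ is spanned by the root vectors indexed by $\mathcal{I}_{k+1}^1\times\mathcal{I}_k^2$, so has dimension $l_{k+1}(d_k-l_k)$ for $0\le k\le m-1$. Summing and telescoping gives
\[
\dim\g^{-1,1}_{\mathfrak a}=\sum_{k=1}^{m}l_k(d_{k-1}-l_{k-1})=\sum_{k=1}^{m}d_{k-1}l_k-\sum_{k=1}^{m}l_{k-1}l_k,
\]
which, on substituting $l_0=0$ and $l_m=d_m$, becomes precisely
\[
d_0l_1+d_1l_2+\dots+d_{m-1}d_m-l_1l_2-l_2l_3-\dots-l_{m-1}d_m=f(l_1,\dots,l_{m-1}).
\]
Hence $\dim\mathfrak{a}\le\dim\g^{-1,1}_{\mathfrak a}=f(l_1,\dots,l_{m-1})\le \max f$, giving the upper bound.

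For the converse I would reverse the construction: given any integers $(l_1,\dots,l_{m-1})\in[0,d_1]\times\cdots\times[0,d_{m-1}]$, pick a Hodge path whose Hodge sequence equals $(0,l_1,\dots,l_{m-1},d_m)$ and form $\g^{-1,1}_{\mathfrak a}$ via the algorithm preceding \textit{Theorem} \ref{maintheorem}; by \textit{Lemma} \ref{primalema} and \textit{Lemma} \ref{comutativitate} the root set of this subspace is commutative, so $\g^{-1,1}_{\mathfrak a}$ is itself an abelian subspace of $\g^{-1,1}$ of dimension $f(l_1,\dots,l_{m-1})$. Taking the $(l_1,\dots,l_{m-1})$ that realise $\max f$ produces an abelian subalgebra of that dimension, matching the upper bound. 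The main technical obstacle is the subtle verification that always setting $j_m=d_m$ is never worse than keeping some of $\mathcal{I}_m$ free to feed the symplectic/$c$-piece, which requires a direct comparison of $l_m(d_{m-1}-l_{m-1})$ with the quadratic contribution $(d_m-l_m)(d_m-l_m+1)/2$ (resp.\ the $B$/$D$-analogue) and the use of the parity constraints coming from \textit{Proposition} \ref{mainproposition}; the remaining steps are then a bookkeeping exercise in the combinatorics of the Hodge path.
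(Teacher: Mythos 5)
Your strategy (reduce to the Hodge-path subspaces of \textit{Theorem} \ref{maintheorem}, count block dimensions, then optimise) is genuinely different from the paper, which offers no self-contained argument for this corollary and simply points to the closing remark and to page 13 of \cite{VHS}. The interior bookkeeping is essentially right: each $\mathfrak{u}_{j_{k+1},j_k}$ has rows indexed by $\mathcal{I}_{k+1}^1$ and columns by $\mathcal{I}_k^2$, hence dimension $l_{k+1}(d_k-l_k)$, and the telescoping identification with $f$ for the blocks $1\le k\le m-2$, as well as the converse construction of a commutative root set realising any admissible $(l_1,\dots,l_{m-1})$ via \textit{Lemma} \ref{primalema} and \textit{Lemma} \ref{comutativitate}, are fine.

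The genuine gap is exactly the step you defer to the end: the claim that one may always take $j_m=d_m$ because the middle contribution $(f_m-j_m)(f_m-j_m+1)/2$ is dominated by $j_m(f_{m-1}-j_{m-1})$ at the maximum. This comparison is false, and no parity or bookkeeping argument will repair it. In the odd-weight (type $(C)$) case the symmetric block $\mathfrak{u}^{C,c}_{m+1,m}$ is itself abelian and commutes with every $\mathfrak{u}_{k+1,k}$ with $k\le m-2$; for instance for weight $5$ with $h^{5,0}=h^{4,1}=1$ and $h^{3,2}=10$ the subspace $\mathfrak{u}_{1,0}\oplus\mathfrak{u}^{C,c}_{3,2}$ is abelian of dimension $1+55=56$, whereas your reduction (and the displayed $f$ with $d_i$ the Hodge numbers) gives at most $10$; the quadratic middle term must genuinely enter the optimisation, as it does in \cite{VHS}. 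In the even-weight type $(D)$ case the opposite problem occurs: setting $l_m=d_m=f_m$ is not even realisable as an abelian subspace, because the half-blocks $\mathfrak{u}_{m,m-1}$ and $\mathfrak{u}^{D,c}_{m,m-1}$ carry the same $\varepsilon$-indices on their rows, so a root $-\varepsilon_{i_1}+\varepsilon_{i_2}$ of the first and $-\varepsilon_{j_1}-\varepsilon_{i_2}$ of the second add up to a root whenever $i_1\ne j_1$; this is precisely why the Hodge-path definition splits $\mathcal{I}_{m-1}$ into three pieces and $\mathcal{I}_m$ into two, and why $d_m$ cannot be read as $f_m$ there (the type $(B)$ piece $\mathfrak{y}_{m,m-1}$ is likewise unaccounted for). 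So the heart of the corollary is the treatment of the middle blocks, which your proposal asserts rather than proves, and as stated the asserted inequality fails.
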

\begin{proof}
See the below remark and page 13 of \cite{VHS}.
\end{proof}
\noindent
\newline
\noindent
\textbf{Abelian subspaces of $\g^{-p,p}$ and $\mathfrak{m}$}
\newline
\noindent
\newline
Let $\g=\k\oplus\mathfrak{m}$ be the natural Cartan decomposition on $\g$ defined by our choice of base Hodge structure on $D$. In this subsection we give a procedure of constructing abelian subspaces of $\g^{-p,p}$ for $p$ odd, and of $\mathfrak{m}=\oplus_{p \equiv 1 (\text{mod } 2)}\g^{-p,p}$, respectively suited for combinatorial considerations. Assume without loss of generality that $m$ is even, the case $m$ odd being analogous.
\newline
\noindent
\newline
For a matrix $M$, call the $i^{th}$-diagonal, the matrix with only nonzero elements on the spots starting on the $i^{th}$-row, first column or first row, $i^{th}$-column and moving one step down diagonally from left to right. Call the $i^{th}$-antidiagonal the matrix with only nonzero elements on the spots starting on the $i^{th}$-row, $i<m$, first column or last row, $i^{th}$-column and moving one step up diagonally from left to right.  
\newline
\noindent
\newline
Following the notation of \textit{section} \ref{combinatorics}, let $\mathcal{J}_1:=\mathcal{J}_1^1\cup \mathcal{J}_1^2$, $\mathcal{J}_2:=\mathcal{J}_2^1\cup \mathcal{J}_2^2$, be the partition of $\mathcal{J}_1$ and $\mathcal{J}_2$ into even and odd integers, respectively. Then in the even weight case the blocks of $\mathfrak{m}\cap A$, $\mathfrak{m}\cap B$,$\mathfrak{m}\cap C$ and $\mathfrak{m}\cap (-A^t)$ respectively, are indexed by $(\mathcal{J}_1^2\cup \mathcal{J}_1^1)\cup (\mathcal{J}_1^1\cup \mathcal{J}_1^2)$, $(\mathcal{J}_1^1\cup \mathcal{J}_2^2)\cup (\mathcal{J}_1^2\cup \mathcal{J}_2^1)$,  $(\mathcal{J}_2^1\cup \mathcal{J}_1^2)\cup (\mathcal{J}_2^2\cup \mathcal{J}_1^1)$ and $(\mathcal{J}_2^1\cup \mathcal{J}_2^2)\cup (\mathcal{J}_2^2\cup \mathcal{J}_2^1)$, respectively. In the odd weight case the blocks of $\mathfrak{m}\cap A$, $\mathfrak{m}\cap B$,$\mathfrak{m}\cap C$ and $\mathfrak{m}\cap (-A^t)$ respectively, are indexed by $(\mathcal{J}_1^2\cup \mathcal{J}_1^1)\cup (\mathcal{J}_1^1\cup \mathcal{J}_1^2)$, $(\mathcal{J}_1^1\cup \mathcal{J}_2^1)\cup (\mathcal{J}_1^2\cup \mathcal{J}_2^2)$,  $(\mathcal{J}_2^1\cup \mathcal{J}_1^1)\cup (\mathcal{J}_2^2\cup \mathcal{J}_1^2)$ and $(\mathcal{J}_2^1\cup \mathcal{J}_2^2)\cup (\mathcal{J}_2^2\cup \mathcal{J}_2^1)$, respectively.
\newline
\noindent
\newline
If $0<p<m$, it follows from \textit{Section} \ref{combinatorics} that $\g^{-p,p}$ contains both $A$- and $C$-blocks, whereas if $p>m$, then $\g^{-p,p}$ contains only $C$-blocks. Recalling the indexing notation of \textit{Section} \ref{combinatorics}, these blocks are the following diagonals: 
\begin{itemize}
\item{$p^{th}$-diagonal of $m\cap \mathfrak{b}^-$,}
\item{$(p+1)^{th}$-diagonal of $\mathfrak{m}\cap \mathfrak{b}^+$,}
\item{$(p+1)^{th}$-antidiagonal of $\mathfrak{m}\cap C$ for $p>m$ and $(m-p)^{th}$-antidiagonal of $\mathfrak{m}\cap C$, for $p<m$,}
\end{itemize}
where $\b^-$ and $\b^+$ are the Borel subalgebras of lower triangular and upper triangular matrices, respectively, of the block $A$, viewed as the diagonal embedding of $\sl$ in $\g$. 
\newline
\noindent
\newline
Note that the set of indices $\mathcal{I}_{i,j}:=\mathcal{I}_i\times \mathcal{I}_j$ parametrising blocks of $m$ have the following properties: $i>j$, $i-j=p$ for some $p$ odd and $j\le m$.
\newline
\noindent
\newline
\noindent
\textbf{Algorithm for describing abelian subspaces of $\g^{-p,p}$ for fixed $p$ odd.}
\newline
\noindent
\newline
\textbf{If $p>m$}, then the $\g^{-p,p}$-piece is identified with the $(p+1)^{th}$-antidiagonal of $\mathfrak{m}\cap C$, whose blocks are indexed by the pairs $\{(p+1,1), (p+3,3), \dots, (p+m-1,m-1)\}$. From \textit{Lemma} \ref{comutativitate} it immediately follows that this forms a commutative set of roots and no partition is needed. 
\newline
\noindent
\newline
\textbf{If $p<m$}, let $m=k\cdot p+r$, such that $k\ge1$ and $r<p$. The blocks parametrising $\g^{-p,p}\cap A$ are indexed by the pairs $\{(p,0), (p+1,1), \dots, (m,m-p)\}$. If $k=1$, then from \textit{Lemma} \ref{comutativitate} it follows that this set parametrises a commutative set of roots and no partition is required. If $k>1$, then for $j\in \{0,1,\dots, r\}$, we group these pairs into sets of $k$-elements, namely $$\mathcal{A}_j:=\{(p+j,j),(2p+j,p+j),(3p+j,2p+j),\dots, (kp+j,(k-1)p+j)\}.$$ For $r<j\le p-1$, we group the rest of the pairs into sets of $k-1$ elements: $$\mathcal{B}_j:=\{(p+j,j), (2p+j,p+j),\dots, (k-1)p+j,(k-2)p+j\}.$$ 
\noindent
One can think of $k$, $r$ and $p$ as certain quantities which count the frequency with which commuting blocks appear. Observe that for $p=1$ we have $j=0$ and only one such set appears. From \textit{Lemma} \ref{comutativitate}, a root indexed by a pair in $\mathcal{A}_{j}$ and a root indexed by a pair in $\mathcal{B}_{j}$ will not add up to a root. Also for different values $j_1$ and $j_2$ a root indexed by a pair in $\mathcal{A}_{j_1}$ together with a root indexed by a pair in $\mathcal{A}_{j_2}$ will not add up to a root and likewise for pairs in $\mathcal{B}_{j_1}$ and $\mathcal{B}_{j_2}$.
\newline
\noindent
\newline
For each $j$ a partition of $\mathcal{I}_{fp+j}:=\mathcal{I}_{fp+j}^1\cap \mathcal{I}_{fp+j}^2$ for all $1\le f\le k-1$ will produce a disjoint set of indices parametrising a commutative set of roots. At the level of blocks this procedure corresponds first to the choice of subspace of $A_{p+j,j}$ composed of rows corresponding to indices in $\mathcal{I}_{p+j}^1$. Then it constructs inductively the subspace of $A_{fp+j,(f-1)p+j}$ of columns corresponding to indices in $\mathcal{I}_{(f-1)p+j}^2$ and rows corresponding to indices in $\mathcal{I}_{fp+j}^1$. As before the roots indexed by elements of $\mathcal{I}_{(f-1)p+j}^2\times \mathcal{I}_{fp+j}^1$ are the Harish-Chandra coordinates of this unipotent piece. 
\newline
\noindent
\newline
The blocks parametrizing $C\cap \g^{-p,p}$ are indexed by the pairs $\{(m,m-p), (m+2,m-p+2), \dots , (m+p-1,m-1)\}$ and corresponding roots are indexed by $\{(n-m,m-p), (n-m-2,m-p+2), \dots , (n-m-p+1, m-1)\}$. From \textit{Lemma} \ref{comutativitate} it follows that this set parametrizes a commutative set of roots so no partition is needed. However, the indexing set for the roots is the same as that indexing the roots involved in the $(m-p)^{th}$-antidiagonal of $\mathfrak{m}\cap\b^{+}$ so a further partition would be needed for the pairs that parametrize roots which together with roots of $\mathcal{A}_j$ and $\mathcal{B}_j$ will add up to a root.
\newline
\noindent
\newline
\newline
\noindent
\newline
\textbf{Remark1. Schubert variations of Hodge structure.}
In \cite{Kerr1} and \cite{Robles}, Kerr and Robles consider flag manifolds $Z=G/P$ which arise as compact duals of Mumford-Tate domains. One of the motivating question of the authors work is which Schubert varieties (i.e. closure of B-orbits) in Z satisfy the infinitesimal period relation (these are called by the authors Schubert variations of Hodge structure or horizontal Schubert varieties) and further, which of those are smooth, homogeneous submanifolds of $Z$. The main result of the authors in relation with this question is \textbf{Theorem} $1.3.$ in \cite{Kerr1} which states the following:
\textit{Let $X\subset Z=G/P$ be a horizontal Schubert variety. If $X$ is smooth, then $X$ is a product of homogeneously embedded, rational homogeneous subvariaties $X(\mathcal{D}')\subset G/P$ corresponding to subdiagrams $\mathcal{D}'\subset \mathcal{D}$ (of the Dynkin diagram associated to $P$). Moreover, each $X(\mathcal{D'})$ is cominuscule.} We believe that using the main theorem of the current paper \textit{Theorem} \ref{teoremaprincipala} and the techniques used to describe Schubert variaties in \cite{Kerr1} and \cite{Robles} one can generelize their theorem by replacing the requirement that a Schubert variety must satisfy the infinitesimal period relation with the more general condition that the tangent space of the Schubert variety at a base point can be identified with a subspace of $\mathfrak{m}\cap\mathfrak{u}^-$, i.e. it satisfies the \textit{condition} $2.$ in the classification problem (*) and obtain the same result. Thus one can ask which Schubert varieties appear as compact duals of flag domains of Hodge type as defined by the classification problem (*). In any case, if these Schubert varieties or any of the Schubert variations of Hodge structure do appear as compact duals of Mumford-Tate domains, then using the results of the current paper one can identify their corresponding Mumford-Tate domain. 
\newline
\noindent
\newline
\textbf{Remark2.} In \cite{VHS}, Carlson, Toledo, and Kasparian investigate the problem of  finding an upper bound for the rank of holomorphic maps $f:N \rightarrow D$, with $D$ a period domain, satisfying the infinitesimal period relation. Let us briefly outline their results (here in boldface) in the context of the current paper. To do this the authors first reduce the problem to finding an upper bound for the dimension of any abelian subspace $\g^{-1,1}$ which can be written as a direct sum of root vectors corresponding to a set of positive, commutative roots (see subsection \textbf{2.Abelian subspaces}). In order to describe this maximal abelian subspaces of $\g^{-1,1}$, as in the current paper, the authors analyse the action of an element $g\in \g^{-1,1}$, as an endomorphism, on the base Hodge decomposition. In contrast to the current paper, the authors choose a Hodge frame, i.e. a basis of the vector space adapted to the base Hodge decomposition, to give a block decomposition of $g\in \g^{-1,1}$. This has the advantage that, using notation from the current paper, if one indexes the blocks of $g$ from $0$ to $n+1$, in the odd weight case and $n+2$ in the even weight case, then the blocks on the $p^{th}$-lower diagonal describe the $\g^{-p,p}$ piece for $p>0$ (see subsection \textbf{4. Block decomposition}). Moreover, after choosing a system of positive roots, the blocks of $\g^{-1,1}$, which are blocks siting right bellow the main diagonal, will be described by a set of negative roots and \textbf{Theorem 5.4} contains a description of the set of roots corresponding to each $\g^{-p,p}$ piece. From the block description the dimension of $\g^{-1,1}$ becomes immediately clear.  In subsection \textbf{6. Proof of the bound}, the upper bound is computed by computing the maximal value of the function $f$ described in \textbf{Corollary 8}. of the current paper. In subsection \textbf{7. Sharpness} the paper contains examples of variations of Hodge structure that achieve this maximal bound. 
\noindent
\newline
\noindent
The problem of finding the block structure of each $\g^{-p,p}$ in a Hodge frame is also suggested in \cite{Stefan} on page $325$, exercise $12.1.4$. In the current paper we were interested in classifying all embeddings of flag domains in a period domain satisfying infinitesimal period relation and more generally local transversality with respect to a base cycle, thus emphasizing the transversal homogeneous spaces as opposed to just the tangent vector space. Consequently, we did not choose an adapted Hodge frame as a basis but a basis on which the root structure of $\g$ is most easily understood and the block structure of the $\g^{-p,p}$ pieces perfectly aligns with the canonically defined Cartan decomposition . As a consequence we have not only described the blocks as vector subspaces of $\g^{-p,p}$ together with their corresponding root vectors, but have shown how this root vectors are actual Harisch-Chandra coordinates for certain $\mathfrak{sl}$, $\mathfrak{so},$ or $\mathfrak{sp}$ embeddings. As such this leads to a complete classification of embeddings of flag domains in $D$ satisfying IPR or the local transversality with respect to the base cycle condition. 
\newline
\noindent
\newline
\textbf{Contact:}\\
{Ana-Maria Brecan}\\  
{anabrecan@gmail.com}\\
Institut f{\"u}r Mathematik \\
FB 08 Physik, Mathematik und Informatik \\
Johannes Gutenberg-Universit{\"a}t Mainz\\
Staudingerweg 9 \\
55099 Mainz\\
Germany.\\

\newpage
\bibliographystyle{alphanum}
\bibliography{citations} 
 
\end{document}